\DeclareSymbolFont{AMSb}{U}{msb}{m}{n}
\DeclareMathSymbol{\Z}{\mathbin}{AMSb}{"5A}
\DeclareMathSymbol{\R}{\mathbin}{AMSb}{"52}
\DeclareMathSymbol{\N}{\mathbin}{AMSb}{"4E}
\DeclareMathSymbol{\Q}{\mathbin}{AMSb}{"51}
\newcommand{\NN}{\mathbb{N}}
\newcommand{\ZZ}{\mathbb{Z}}
\newcommand{\tp}{\textup{tp}}
\newcommand{\dcl}{\textup{dcl}}
\newcommand{\acl}{\textup{acl}}
\newcommand{\Th}{\textup{Th}}
\newcommand{\floor}[1]{\lfloor #1 \rfloor}
\newcommand{\mc}[1]{\mathcal{#1}}
\newcommand{\ob}[1]{\overline{#1}}
\def\Ind{\setbox0=\hbox{$x$}\kern\wd0\hbox to 0pt{\hss$\mid$\hss}
\lower.9\ht0\hbox to 0pt{\hss$\smile$\hss}\kern\wd0}
\def\Notind{\setbox0=\hbox{$x$}\kern\wd0\hbox to 0pt{\mathchardef
\nn=12854\hss$\nn$\kern1.4\wd0\hss}\hbox to
0pt{\hss$\mid$\hss}\lower.9\ht0 \hbox to
0pt{\hss$\smile$\hss}\kern\wd0}
\newtheorem{thm}{Theorem}[section]
\newtheorem{lem}[thm]{Lemma}
\newtheorem{cor}[thm]{Corollary}
\newtheorem{prop}[thm]{Proposition}
\newtheorem{fact}[thm]{Fact}
\newtheorem{quest}[thm]{Question}
\theoremstyle{definition}
\newtheorem{definition}[thm]{Definition}
\theoremstyle{remark}
\theoremstyle{remark}
\theoremstyle{remark}
\newtheorem{claim}[thm]{Claim}
\theoremstyle{remark}
\theoremstyle{remark}
\begin{document}
\bibliographystyle{plain}

\title[Topology of definalbe sets in OAGs of burden 2]{Topological properties of definable sets in ordered Abelian groups of burden 2}

\author{Alfred Dolich and John Goodrick}

\thanks{The first author's research was partially supported by PSC-CUNY Grant \#63392-00 51. The second author would like to thank the Universidad de los Andes for granting him paid leave (Semestre de Trabajo Académico Independiente) during which part of this research was carried out.}

\address{Dept. of Math and CS \\ Kingsborough Community College (CUNY) \\ 2001 Oriental Blvd.\\ Brooklyn, NY 11235}
\address{Department of Mathematics \\ CUNY Graduate Center \\ 365 5th Ave. \\ New York, NY 10016}
\email{alfredo.dolich@kbcc.cuny.edu}

\address{Departamento de Matemáticas \\ Universidad de los Andes \\ Carrera 1 No. 18A-12 \\ Bogotá, COLOMBIA 111711}
\email{jr.goodrick427@uniandes.edu.co}

\maketitle

\begin{abstract}

We obtain some new results on the topology of unary definable sets in expansions of densely ordered Abelian groups  of burden $2$. In the special case in which the structure has dp-rank $2$, we show that the existence of an infinite definable discrete set precludes the definability of a set which is dense and codense in an interval, or of a set which is topologically like the Cantor middle-third set (Theorem~\ref{sec2main}). If it has burden $2$ and both an infinite discrete set $D$ and a dense-codense set $X$ are definable, then translates of $X$ must witness the Independence Property (Theorem~\ref{IP_translations}). In the last section, an explicit example of an ordered Abelian group of burden $2$ is given in which both an infinite discrete set and a dense-codense set are definable.
\end{abstract}

\section{Introduction}

In this note we will study the topological properties of sets definable in densely ordered Abelian groups satisfying an extra model-theoretic hypothesis (having ``burden 2'') which in some sense limits the combinatorial complexity of combinations of instances of formulas (the precise definition will be recalled below). Typical examples of such groups are the structures $\mc{R}_1 = \langle \R; <, +, \Q \rangle$, the additive group of real numbers endowed with a unary predicate for the set $\Q$ of rationals, and $\mc{R}_2 = \langle \R; <, +, \Z\rangle$, the same group but with a predicate for the integers. In fact, both of the structures $\mc{R}_1$ and $\mc{R}_2$ are of \emph{dp-rank 2}, which is equivalent to having burden 2 and being NIP.\footnote{Note that it can be tricky to prove precise upper bounds on the burden or the dp-rank of a structure. See \cite{DG} for a detailed calculation of the dp-rank of the structures mentioned here.}

Recall that in an expansion of a divisible ordered Abelian group (or ``OAG'') of dp-rank 1, no infinite discrete subset of the domain can be definable  (see \cite{Goodrick_dpmin}), nor can any dense and codense subset be definable (by a result of Simon \cite{S}). However, the examples $\mc{R}_1$ and $\mc{R}_2$ above show that neither of these results hold for OAGs of dp-rank 2. One of the new results of this article is that in an expansion of a divisible OAG of dp-rank 2, there cannot be \emph{both} a definable infinite discrete set and a definable dense-codense set (see Theorem~\ref{sec2main} below).

The goal of this article is to understand the topological properties of unary definable sets in an expansion of a  densely ordered OAG $\mathcal{R} = \langle R; < , +, \ldots \rangle$ with burden at most $2$. Suppose that $X \subseteq R$ is definable in such a structure. The case when $X$ is open may be considered the ``nicest'' situation since the topological structure around any point in $X$ is as simple as possible. An expansion of a  densely ordered OAG in which every infinite definable $X \subseteq R$ has interior is called a \emph{viscerally ordered structure}. In our previous work  \cite{viscerality}, we undertook an extensive analysis of definable sets in a viscerally ordered structure, giving a cell decomposition theorem and showing that topological dimension has many desirable properties, justifying the intuition that this is the ``tamest'' possible case.

Suppose now that $\mathcal{R}$ is \emph{not} visceral and that furthermore $\Th(\mathcal{R})$ has finite burden. Let $X \subseteq R$ be definable with empty interior. Since infinite definable discrete sets in $\mathcal{R}$ cannot have accumulation points (see \cite[Corollary 2.13]{DG}), it follows easily that $X$ can be partitioned as $X=X_0 \cup X_1 \cup X_2$ with each $X_i$ definable so that:

\begin{enumerate}

\item $X_1$ is either empty or dense and codense in an open definable set $U$ with $X_1 \subset U$;

\item $X_2$ is either empty or discrete (and possibly finite); and 

\item $X_3$ is either empty or an infinite definable set which is nowhere dense and has no isolated points.

\end{enumerate}

In case (3), if $X_3$ is non-empty then the topological closure $\overline{X}_3$ is what we call a \emph{Cantor-like} definable set, namely it is a nonempty set which is closed, nowhere dense, and has no isolated points.

We conjecture that in an expansion of a  divisible OAG of dp-rank $2$, if there is an infinite definable set satisfying one of the three conditions above (being discrete, being dense-codense in an interval, or being Cantor-like), then there cannot be any other infinite definable set satisfying either of the other two (giving a basic trichotomy). Though we cannot quite prove this, in the next section we will show that the existence of an infinite definable discrete set precludes the definability of either a dense-codense or an infinite Cantor-like set (Theorem~\ref{sec2main}). In the case when $\mathcal{R}$ has burden $2$ and is definably complete, there can never be a definable Cantor-like set (Theorem~\ref{no_cantor_dc} below). Finally, in any OAG with burden $2$, if there is both an infinite discrete set $D$ definable in such a structure and also a definable $X$ which is dense and codense in some interval, then the Independence Property can be witnessed by translations of $X$ (Theorem~\ref{IP_translations}).

After proving the above general results in Section 2, the final section (Section 3) is devoted to the study of a concrete example of a divisible OAG of burden 2 in which both an infinite discrete set and a dense codense set are definable.

Note that the present work focuses on the topological properties of definable sets. Given an expansion of a OAG of burden $2$ in which an infinite discrete set $D$ is definable, it turns out that $D$ must have a very simple ``arithmetical'' structure, similar to the subsets of $\Z$ definable in Presburger arithmetic. For much more on this topic, see our previous article \cite{DG} or our recent preprint \cite{discrete_burden_2}, which can be seen as a companion to the present work.

\subsection{Notation and basic definitions}

Mostly we will follow standard notational conventions from model theory (as in \cite{Guide_NIP}, for example). ``Formulas'' and ``models'' are as in first-order logic and overlined variables ($\overline{x}, \overline{a}, \ldots$) denote finite tuples. Unlike some authors, a plain variable such as $x$ (not $\overline{x}$) is always a single variable, and we rarely work in $T^{eq}$. Also note that ``definable'' for us always means ``definable over some set of parameters.''

The abbreviation OAG stands for Ordered Abelian Group, which is a structure $\langle G; +, < \rangle$ consisting of an Abelian group $\langle G; +\rangle$ endowed with a total ordering $<$ which is translation invariant ($x < y$ implies that $x+z < y + z$). Some OAGs are \emph{discretely ordered} and have a least positive element (such as $\langle \Z; +, ,\rangle$), but in the present article all OAGs will be \emph{densely ordered}. Note that if an OAG is \emph{divisible} -- that is, for every $x \in G$ and every positive integer $n$, there is a $y \in G$ such that $ny = x$ -- then it is densely ordered, but there are examples of OAGs of finite dp-rank which are densely ordered and not divisible. See, for instance, \cite{Guide_NIP} or \cite{pure_OAGs} for more examples.

All topological properties of sets mentioned in this article (``open,'' ``dense,'' and so on) refer to the order topology generated by all open intervals.

At some points in Section 2 it will be useful to work in the Dedekind completion $\overline{R}$ of an OAG $\mathcal{R}$, for which the following notion from \cite{MMS} will be useful.

\begin{definition}
\label{ded_sort}
Suppose that $\mathcal{R} = \langle R; <, \ldots \rangle$ is a linearly ordered structure and $\{X_{\overline{a}} \, : \, \overline{a} \in Z\}$ is a definable family of subsets $X_{\overline{a}}$ of $R$, with $Z \subseteq R^n$ definable over $\emptyset$ and $X_{\overline{a}} = \{b \in R \, : \, \mathcal{R} \models \varphi(b; \overline{a}) \}$ for some formula $\varphi(x; \overline{y})$. Consider the relation $\sim$ on $n$-tuples from $Z$ such that $\overline{a}_1 \sim \overline{a}_2$ if and only if

$$ \forall y_1 \in  X_{\overline{a}_1} \exists y_2 \in X_{\overline{a}_2} \left( y_1 < y_2 \right) \wedge \forall y_2 \in  X_{\overline{a}_2} \exists y_1 \in X_{\overline{a}_1} \left( y_2 < y_1 \right).  $$

The relation $\sim$ is a definable equivalence relation on $Z$ which we informally think of as expressing that $\sup(X_{\overline{a}_1}) = \sup(X_{\overline{a}_2})$ where the suprema are calculated in the Dedekind completion $\overline{R}$ of $\mathcal{R}$, and defined so that $\sup(\emptyset) = - \infty$ and $\sup(X_{\overline{a}}) = + \infty$ when $X_{\overline{a}}$ is unbounded. Then $Z / \sim$, which is a sort in $\mathcal{R}^{eq}$, is a called a \emph{sort in $\overline{R}$.} We naturally identify such a sort with a subset of the Dedekind completion $\overline{R}$ of $\mathcal{R}$ with the induced ordering.

Thinking of sorts in $\overline{R}$ as sorts in $\mathcal{R}^{eq}$, we can also talk about definable subsets of sorts in $\overline{R}$, and of functions to and from such sorts.
\end{definition}

\begin{definition}
If $\mathcal{R} = \langle R; <, +, \ldots, \rangle$ is an expansion of an OAG, then $\mathcal{R}$ is \emph{definably complete} if for every nonempty definable subset $X \subseteq R$ which has an upper bound, $\sup(X) \in R$.
\end{definition}

The next observation (\cite[Proposition 2.2]{ivp}) will occasionally be useful.

\begin{fact}
\label{DC_divisible}
If $\mathcal{R}$ is an expansion of a densely ordered OAG which is definably complete, then $\mathcal{R}$ is divisible.
\end{fact}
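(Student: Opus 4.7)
The plan is to fix $x > 0$ and a positive integer $n \geq 2$ (the cases $x \leq 0$ and $n = 1$ being immediate) and to produce the desired $y$ with $ny = x$ as the supremum of
\[ Y := \{ y \in R : y \geq 0 \text{ and } ny \leq x \}. \]
This set contains $0$ and is bounded above by $x$ (since $y > x > 0$ forces $ny \geq y > x$), so definable completeness yields $s := \sup Y \in R$; it then suffices to show $ns = x$.

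Both alternatives $ns < x$ and $ns > x$ will be refuted once one has the following key technical lemma, which is the main obstacle: for every $\delta > 0$ in $R$ and every positive integer $m$, there is some $\eta > 0$ in $R$ with $m\eta < \delta$. Granted this, a small perturbation $s \pm \eta$ produces either an element of $Y$ above $s$ (when $ns < x$, applied with $\delta = x - ns$) or a smaller upper bound of $Y$ (when $ns > x$, applied with $\delta = ns - x$); either possibility contradicts $s = \sup Y$ and hence forces $ns = x$.

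I plan to prove the key lemma by contradiction, using a second application of definable completeness together with density of the order. Suppose some $\delta > 0$ and $m \geq 1$ satisfy $m\eta \geq \delta$ for every $\eta > 0$, so that the definable set $U := \{mz : z > 0\}$ is bounded below by $\delta$. Let $u := \inf U \in R$, giving $u \geq \delta > 0$, and by density fix $\epsilon_0 \in (0, \delta)$. If distinct positive elements $z_1 < z_2$ both satisfied $mz_i \in [u, u + \epsilon_0)$, then $0 < m(z_2 - z_1) < \epsilon_0 < \delta$, directly contradicting the standing assumption at $\eta = z_2 - z_1$; hence at most one $z > 0$ lies in the preimage of $[u, u + \epsilon_0)$.

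Combining this uniqueness with the infimum property forces a single $z^* > 0$ with $mz^* = u$: each sufficiently small $\epsilon > 0$ yields some $z_\epsilon > 0$ with $mz_\epsilon \in [u, u + \epsilon) \subseteq [u, u + \epsilon_0)$, and uniqueness collapses all such $z_\epsilon$ to a common $z^*$ with $mz^* \leq u$, hence $mz^* = u$. But density then supplies $z_0 \in (0, z^*)$ with $mz_0 \in U$ and $mz_0 < mz^* = u$, contradicting $u = \inf U$ and thereby establishing the key lemma, and with it the fact.
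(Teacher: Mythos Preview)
Your proof is correct. The paper itself does not give an argument for this fact; it simply records it as \cite[Proposition~2.2]{ivp}. Your write-up therefore supplies a self-contained proof where the paper offers only a citation. The approach you take---taking $s=\sup\{y\ge 0 : ny\le x\}$ and ruling out $ns\ne x$ via a ``small elements'' lemma---is the standard one, and each step checks out: the set $Y$ is definable with parameter $x$, the set $U=\{mz:z>0\}$ is definable, and the density hypothesis is used exactly where needed (to find $\epsilon_0\in(0,\delta)$, to produce arbitrarily small positive $\epsilon$, and to find $z_0\in(0,z^*)$). One small stylistic point: when you write ``uniqueness collapses all such $z_\epsilon$ to a common $z^*$ with $mz^*\le u$,'' you might make explicit that $mz^*\ge u$ holds automatically since $mz^*\in U$, so the two together give $mz^*=u$; this is clearly your intent but could be stated more directly.
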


The less commonly used definitions we need are those of \emph{burden} and \emph{dp-rank}. These notions originally go back to Shelah \cite{strong_dep}, but we will use the versions as given by Adler \cite{adler_strong_dep}. We recall them briefly here. Below, ``$T$'' always denotes some complete theory.

\begin{definition}
An \emph{ict-pattern of depth $\kappa$} is a sequence $\{ \varphi_i(\overline{x}; \overline{y}_i) \, : \, i < \kappa \}$ of formulas and a sequence $\{\overline{a}_{i,j} \, : \, i < \kappa, j < \omega \}$ of tuples from some model $\mathcal{M} \models T$ such that for every function $\eta \, : \, \kappa \rightarrow \omega$, the partial type

\begin{equation}
\{\varphi_i(\overline{x}; \overline{a}_{i, j})^{\textup{if } j = \eta(i)} \, : \, i < \kappa, j < \omega \}
\end{equation}
is consistent, where the exponent ``$\textup{if } j = \eta(i)$'' means that the formula is negated if $j \neq \eta(i)$. If $p(\overline{x})$ is a partial type, an ict-pattern as above is \emph{in $p(\overline{x})$} if every partial type as in (1) is consistent with $p(\overline{x})$.

The partial type $p(\overline{x})$ has \emph{dp-rank less than $\kappa$} if there is \textbf{no} ict-pattern of depth $\kappa$ in $p(\overline{x})$. If the least $\kappa$ such that the dp-rank of $p(\overline{x})$ is less than $\kappa$ is a successor cardinal, say $\kappa = \lambda^+$, then we say that the dp-rank of $p(\overline{x})$ is $\lambda$.

The dp-rank of the theory $T$ is the dp-rank of the partial type $x=x$ (in a single free variable $x$), and $T$ is \emph{dp-minimal} if its dp-rank is $1$.
\end{definition}

\begin{definition}
An \emph{inp-pattern of depth $\kappa$} is a sequence $\{ \varphi_i(\overline{x}; \overline{y}_i) \, : \, i < \kappa \}$ of formulas, a sequence $\{k_i \, : \, i < \kappa\}$ of positive integers, and a sequence $\{\overline{a}_{i,j} \, : \, i < \kappa, j < \omega \}$ of tuples from some model $\mathcal{M} \models T$ such that:

$\bullet$ For each $i < \kappa$, the ``$i$-th row'' $$\{\varphi_i(\overline{x}; \overline{a}_{i, j}) \, : \, j < \omega \}$$ is $k_i$-inconsistent; and

$\bullet$ For each function $\eta \, : \, \kappa \rightarrow \omega$, the partial type

\begin{equation}
\{ \varphi_i(\overline{x}; \overline{a}_{i, \eta(i)}) \, : \, i < \kappa\}
\end{equation}
is consistent.

If $p(\overline{x})$ is a partial type, an inp-pattern as above is \emph{in $p(\overline{x})$} if every partial type as in (2) is consistent with $p(\overline{x})$.

The partial type $p(\overline{x})$ has \emph{burden less than $\kappa$} if there is \textbf{no} inp-pattern of depth $\kappa$ in $p(\overline{x})$. If the least $\kappa$ such that the burden of $p(\overline{x})$ is less than $\kappa$ is a successor cardinal, say $\kappa = \lambda^+$, then we say that the burden of $p(\overline{x})$ is $\lambda$.

The burden of the theory $T$ is the burden of the partial type $x=x$ (in a single free variable $x$), and $T$ is \emph{inp-minimal} if its burden is $1$.

\end{definition}

\begin{fact}
(Adler, \cite{adler_strong_dep}) The dp-rank of a theory is less than some cardinal $\kappa$ if and only if $T$ is NIP. In case $T$ is NIP, the dp-rank of $T$ is equal to the burden of $T$. In particular, $T$ is dp-minimal if, and only if, $T$ is both NIP and inp-minimal.
\end{fact}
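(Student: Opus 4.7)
The plan is to prove the three assertions in sequence, with the genuinely new content concentrated in the equality $\dpr(T) = \textup{burden}(T)$ under NIP. A useful preliminary observation is that every ict-pattern is automatically an inp-pattern whose rows are $2$-inconsistent: in an ict-pattern, no path function $\eta : \kappa \to \omega$ can select two distinct indices $j, j'$ in the same row $i$, so the positive instances along row $i$ form a $2$-inconsistent sequence of instances of $\varphi_i$, and any path witness for the ict-pattern is also a path witness for the derived inp-pattern. Hence $\dpr(T) \le \textup{burden}(T)$ unconditionally.

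For part~(1), if $T$ has IP via $\varphi(\ob x; \ob y)$ witnessed by some infinite sequence, I would use compactness to obtain a family $(\ob b_\alpha)_{\alpha \in \kappa \times \omega}$ such that every subset of the index set is realized positively/negatively by some element. Setting $\varphi_i := \varphi$ and $\ob a_{i,j} := \ob b_{(i,j)}$ produces an ict-pattern of depth $\kappa$: the consistency of any path $\eta$ follows by applying the IP property to $S := \{(i, \eta(i)) : i < \kappa\}$. So IP forces $\dpr(T)$ to be unbounded. Conversely, in any ict-pattern of depth $|T|^+$ some fixed formula $\varphi$ appears on an infinite set of rows, and selecting a single column from each of those rows directly exhibits IP for $\varphi$. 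Thus $T$ is NIP if and only if $\dpr(T) < \kappa$ for some cardinal $\kappa$.

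Part~(2), the remaining inequality $\textup{burden}(T) \le \dpr(T)$ under NIP, is the crux. Given an inp-pattern of depth $\kappa$ with rows $(\ob a_{i,j})_{j<\omega}$ that are $k_i$-inconsistent, I would first apply Ramsey together with compactness to replace the array by a mutually indiscernible one $(\ob a'_{i,j})$ preserving both the $k_i$-inconsistency of rows and the consistency of all paths. For any $c$ realizing a fixed path type, NIP forces the truth set $\{j : \models \varphi_i(c; \ob a'_{i,j})\}$ along the indiscernible row to be a finite union of convex subsets of bounded alternation; combined with $k_i$-inconsistency this bounds its size. A simultaneous refinement over all rows — possible because mutual indiscernibility is inherited by sub-arrays — then massages the pattern into one in which every path selects exactly one positive index per row, i.e., an ict-pattern of depth $\kappa$, whence $\kappa \le \dpr(T)$.

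Part~(3) is an immediate corollary: dp-minimality of $T$ means $\dpr(T) < 2$, which by~(1) requires NIP, and under NIP is equivalent by~(2) to $\textup{burden}(T) < 2$, that is, inp-minimality. The main technical obstacle is part~(2): although every ict-pattern is trivially an inp-pattern, the reverse upgrade under NIP cannot be carried out one row at a time, and Adler's use of mutual indiscernibility is precisely what allows the NIP alternation bounds in each row to cohere with the path-consistency across all rows simultaneously.
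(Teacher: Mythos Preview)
The paper does not prove this fact; it is stated with a citation to Adler, so there is no paper proof to compare against. Your outline for part~(1) is fine and part~(3) follows. The error is in your ``preliminary observation.''

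The claim that every ict-pattern is automatically an inp-pattern with $2$-inconsistent rows is false, and your justification is a non sequitur: that a path function $\eta$ selects only one index per row says nothing about whether some \emph{other} element might satisfy $\varphi_i(x;\ob a_{i,j})\wedge\varphi_i(x;\ob a_{i,j'})$ for $j\neq j'$. The random graph with $\varphi(x;y):=E(x,y)$ and pairwise distinct parameters $a_{i,j}$ is an ict-pattern of every depth (by the extension axioms), yet each row $\{E(x,a_{i,j}):j<\omega\}$ is finitely satisfiable and hence not $k$-inconsistent for any $k$. Consequently your claimed unconditional inequality $\dpr(T)\le\textup{burden}(T)$ is false: the random graph has unbounded dp-rank but is inp-minimal.

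You have in fact reversed the two directions. The inequality $\textup{burden}(T)\le\dpr(T)$ is the one that holds unconditionally, and your part~(2) sketch essentially proves it: once the array is mutually indiscernible, $k_i$-inconsistency alone (no NIP needed) gives $|S_i(c)|<k_i$, and thinning each row to avoid $S_i(c)\setminus\{0\}$ yields an ict-pattern. The direction that genuinely requires NIP is $\dpr(T)\le\textup{burden}(T)$: given a mutually indiscernible ict-pattern, one uses the NIP alternation bound $n_i$ for $\varphi_i$ to pass to a block formula $\psi_i(x;\ob y):=\varphi_i(x;y_0)\wedge\bigwedge_{0<|l|\le n_i}\neg\varphi_i(x;y_l)$ with parameters taken in disjoint blocks, so that bounded alternation forces the new rows to be $k$-inconsistent while the ict witnesses supply path consistency. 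That is the step your preliminary observation tried to shortcut.
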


To mention some related work, fields of finite dp-rank have been recently classified by Johnson, who also showed that any valued field with finite dp-rank is Henselian \cite{Johnson_dp_finite}. Interesting examples of finite burden structures which are not finite dp-rank include pseudo real-closed fields \cite{samy_prcf}. For more background on these concepts and how they relate to NIP, see the introduction to our companion article \cite{discrete_burden_2} or the survey \cite{goodrick_survey}.

\section{Topologoical properties of definable sets in burden-2 OAGs}

In this section we will prove the general topological results for OAGs of burden 2 mentioned in the introduction.

Throughout this section, we will work under the following assumptions, unless otherwise stated:

\begin{itemize}

\item $\mathcal{R} = \langle R; <, +, \ldots\rangle$ is an expansion of a  densely-ordered OAG with complete theory $T$; 

\medskip

\item $\mathcal{R}$ is sufficiently saturated (generally just $|T|^+$-saturated will be enough);

\medskip

\item The burden of $\mathcal{R}$ is at most $2$.

\end{itemize}

Note that in this section, we do not generally assume that $\mathcal{R}$ is definably complete (unless we explicitly say so).


\begin{lem}
\label{rank_1_interval}
Suppose that $I \subseteq R$ is an interval such that $I$ has burden $1$. Then if $X \subseteq I$ is definable and nowhere dense, then $X$ is finite.
\end{lem}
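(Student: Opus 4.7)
The plan is to prove the contrapositive: assuming $X \subseteq I$ is an infinite, definable, nowhere dense set, I will exhibit an inp-pattern of depth $2$ in the partial type $x \in I$, contradicting the burden-$1$ hypothesis on $I$. The first row of the pattern will come from the densely ordered group structure: pairwise disjoint open intervals $J_j \subseteq I$ given by $\varphi_1(x; y, z) = (y < x < z)$ with parameters $(a_{1,j}, b_{1,j})$; this row is automatically $2$-inconsistent.

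For the second row, the natural candidate I would try is translates of $X$: take $\varphi_2(x; c) = (x - c \in X)$, and choose parameters $c_j$ ($j < \omega$) so that (i) the translates $X + c_j$ are pairwise disjoint, equivalently $c_j - c_{j'} \notin X - X$ for $j \neq j'$, giving $2$-inconsistency of row $2$, and (ii) each $J_{j_1}$ meets every $X + c_{j_2}$, giving column consistency. Condition (i) should be arrangeable by a saturation argument, since $X$ having empty interior forces $X - X$ to be a proper definable subset of $R$, leaving room for each successive $c_j$ to be picked generically. Condition (ii) reduces to asking that each shifted interval $J_{j_1} - c_{j_2}$ meet $X$.

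Reconciling (i) and (ii) will be the \emph{main technical obstacle}: if the gaps of $\overline{X}$ are unbounded, no uniform length for the $J_j$'s will suffice to ensure they hit all the shifted translates. To resolve this I would perform a case analysis on the topology of $X$. If $X$ has infinitely many isolated points, these form an infinite definable discrete subset $D \subseteq I$, and the structural rigidity of $D$ given by \cite[Corollary 2.13]{DG} together with the arithmetic constraints of \cite{discrete_burden_2} should permit a clean construction of disjoint translates of $D$ (playing the role of $X$ in row $2$), whose bounded ``local'' gap structure then makes (ii) automatic for appropriately chosen $J_j$. Otherwise, after removing finitely many isolated points, $\overline{X}$ is perfect and nowhere dense, i.e., Cantor-like; here I would work locally near an accumulation point $p \in \overline{R}$ (which exists by a compactness argument in the Dedekind completion), clustering the $J_j$'s near $p$ and exploiting the self-similar density of $X$ near $p$ to furnish the translates. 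Either case should yield a depth-$2$ inp-pattern in $x \in I$, contradicting burden $1$ and forcing $X$ to be finite.
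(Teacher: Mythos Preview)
Your overall strategy---produce an inp-pattern of depth $2$ inside $I$, with one row given by pairwise disjoint open intervals and another by translates of $X$---is exactly the shape of the argument in \cite[Lemma~3.3(1)]{Goodrick_dpmin}, to which the paper simply defers (observing only that the proof relativizes from $R$ to $I$). So at the level of strategy you are aligned with the paper.

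The execution, however, has a real gap. Your claim that ``$X$ having empty interior forces $X-X$ to be a proper definable subset of $R$'' is not the relevant point and does not rescue the construction: what you actually need is that $X-X$ omit a neighbourhood of $0$, and this fails for nowhere dense sets in general (for the classical Cantor set $C$ one has $C-C=[-1,1]$). Thus $c_j$'s chosen to avoid $X-X$ may be forced far apart, and then no fixed family $\{J_j\}$ can meet every translate $X+c_j$. You correctly flag this tension, but your proposed resolution is not a proof. In the discrete branch the appeal to \cite{DG} and \cite{discrete_burden_2} is opaque and unnecessary; a direct argument suffices: pass by saturation to an infinite $D'\subseteq D$ whose points are pairwise $2\epsilon$-separated, take row~1 to be the intervals $(a_i-\epsilon,a_i+\epsilon)$ for $a_i\in D'$ and row~2 to be the formulas $x-\delta_j\in D'$ for distinct $\delta_j\in(0,\epsilon)$, and observe that $a_i+\delta_j$ witnesses consistency of every column. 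In the Cantor-like branch, ``self-similar density near an accumulation point'' is not a property that definable Cantor-like sets possess in this generality, so that half of the case split is unsubstantiated. The substance of \cite[Lemma~3.3(1)]{Goodrick_dpmin} lies precisely in handling the nowhere dense case without such an appeal, and your sketch does not reproduce it.
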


\begin{proof}
This is essentially the same result as Lemma~3.3(1) of \cite{Goodrick_dpmin}, except that we allow $I$ to be any interval instead of the whole universe $R$.  The same proof as in \cite{Goodrick_dpmin} goes through, working within $I$.
\end{proof}

\begin{lem}
\label{rank_2_intervals}
If there is an infinite discrete set definable in $\mathcal{R}$ then there is $\epsilon>0$ so that $(0,\epsilon)$ has burden $1$.
\end{lem}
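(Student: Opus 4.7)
The plan is to argue by contradiction: suppose that for every $\epsilon > 0$ the partial type $\{0 < x < \epsilon\}$ has burden at least $2$. I will then construct an inp-pattern of depth $3$ in $\mathcal{R}$, contradicting the hypothesis that $\mathcal{R}$ has burden at most $2$. The idea is to combine an existing depth-$2$ pattern inside a small interval with a third row indexed by sparse elements of $D$.

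Using the no-accumulation property of $D$ guaranteed by \cite[Corollary 2.13]{DG} together with the saturation of $\mathcal{R}$, I would first produce $\epsilon > 0$ and an infinite sequence $d_0 < d_1 < \ldots$ in $D$ such that each $d_k$ is $\epsilon$-isolated in $D$ from both sides, i.e. $D \cap (d_k - \epsilon, d_k + \epsilon) = \{d_k\}$ (in particular the intervals $(d_k, d_k + \epsilon)$ are pairwise disjoint). By the contradiction hypothesis applied to this $\epsilon$, fix a depth-$2$ inp-pattern $\{\varphi_i(x; \overline{a}_{i, j})\}_{i \in \{1,2\}, \, j < \omega}$ consistent with $0 < x < \epsilon$, each row $\{\varphi_i(x; \overline{a}_{i, j})\}_j$ being $k_i$-inconsistent.

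Next I would introduce the ``localized'' formulas
\[
\varphi_i^{\ast}(x; \overline{y}_i) \, := \, \exists z \in D \, \bigl( z < x < z + \epsilon \, \wedge \, \varphi_i(x - z; \overline{y}_i) \bigr).
\]
By $\epsilon$-isolation, whenever $x \in (d_k, d_k + \epsilon)$ the witness $z$ is uniquely equal to $d_k$. The three rows of the proposed depth-$3$ inp-pattern are $\{\varphi_1^{\ast}(x; \overline{a}_{1, j})\}_j$, $\{\varphi_2^{\ast}(x; \overline{a}_{2, j})\}_j$, and $\{d_k < x < d_k + \epsilon\}_k$. Row $3$ is $2$-inconsistent by disjointness of the intervals; Rows $1$ and $2$ inherit $k_i$-inconsistency because any simultaneous realization $x^{\ast}$ of $k_i$ formulas from Row $i$ forces a common witness $z^{\ast} \in D$, so that $y^{\ast} := x^{\ast} - z^{\ast}$ realizes $k_i$ formulas of the original row $i$, a contradiction; and every path $(j_1, j_2, j_3)$ is consistent since Row $3$ pins $x$ to $(d_{j_3}, d_{j_3} + \epsilon)$, uniqueness forces $z = d_{j_3}$ in each $\varphi_i^{\ast}$, and the shift $y = x - d_{j_3}$ then realizes the original depth-$2$ pattern in $(0, \epsilon)$.

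The main obstacle is the very first step: securing the pair $(\epsilon, \{d_k\}_k)$. When consecutive gaps in $D$ are bounded below by a positive constant the subsequence is immediate; in a general saturated model $D$ could a priori contain elements with arbitrarily small local gaps, and a more delicate saturation/Ramsey argument is needed to extract a sufficiently sparse infinite subset. An alternative bypass is to replace the existential quantifier by the definable base-point function $\sigma(x) := \max \bigl( D \cap (-\infty, x] \bigr)$, since $\sigma(x)$ is a single definable element and so the ``uniqueness of $z$'' issue disappears; whether this substitute is available depends on $D$ being unbounded below and on its discreteness behavior. Confirming that one of these extractions goes through is the technical heart of the argument.
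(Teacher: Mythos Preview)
Your architecture --- translate a depth-$2$ pattern along an $\epsilon$-sparse subsequence of $D$ and append a third row of disjoint intervals --- is exactly the paper's. But you have the difficulties inverted. The step you call the ``main obstacle,'' securing $\epsilon>0$ and infinitely many $\epsilon$-isolated points of $D$, is a one-line compactness argument: any finitely many points of $D$ are simultaneously isolated by some positive radius, so the partial type in the single variable $y$ asserting ``$y>0$ and for each $n$ there exist $n$ distinct $y$-isolated points of $D$'' is finitely satisfiable and hence realized by saturation. The paper dispatches this in one sentence; no Ramsey argument and no appeal to the no-accumulation result are needed.

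The actual gap is your verification that Rows~1 and~2 inherit $k_i$-inconsistency. Your observation that the witness $z$ is unique \emph{when $x\in(d_k,d_k+\epsilon)$} is correct, but that is only what is needed for consistency of paths. For row-inconsistency you must rule out a common realization $x^\ast$ of $k_i$ many formulas $\varphi_i^\ast(x;\overline{a}_{i,j})$ for an \emph{arbitrary} $x^\ast$, which need not lie near any $d_k$. Since your $\varphi_i^\ast$ only asserts the existence of some $z\in D$ with $z<x^\ast<z+\epsilon$, and $D$ may well contain clusters of points with gaps smaller than $\epsilon$ away from the chosen $d_k$'s, the $k_i$ witnesses can all be different, and then the translates $x^\ast-z$ do not jointly contradict the original row. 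The paper's remedy is precisely the one you float at the end: build uniqueness into the formula itself, taking $\varphi'_i(x;\overline{y})$ to say ``there is a \emph{unique} $a\in D$ with $a<x<a+\epsilon$, and $\varphi_i(x-a;\overline{y})$ holds.'' This forces a common witness across any $k_i$ instances and the inconsistency transfers; consistency of paths is unaffected since each $d_k$ is $\epsilon$-isolated by construction.
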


\begin{proof}
Suppose to the contrary that $D \subseteq R$ is infinite, discrete, and definable, and that for every $\epsilon > 0$ in $R$ there is an inp-pattern of depth $2$ consistent with $(0, \epsilon)$.

Now using $\omega$-saturation of $\mathcal{R}$ we can select an increasing sequence of elements $\{a_i \, : \, i \in \omega\} \subseteq D$ and an $\epsilon > 0$ such that for every $i$ we have $(a_i - 2 \epsilon, a_i + 2 \epsilon) \cap D = \{a_i\}$. We can now construct an inp-pattern of depth $3$ by attaching translated copies of the inp-pattern within $(0, \epsilon)$ onto each point of $a_i$ and adding a third row consisting of pairwise disjoint intervals, leading to a contradiction. 

More precisely, if $\varphi_0(x, \overline{b}_{0,j})$ and $\varphi_1(x, \overline{b}_{1,j})$ witness an inp-pattern of depth $2$ consistent with $(0, \epsilon)$, then for each $\ell \in \{0,1\}$ let $\varphi'_\ell(x, \overline{b}_{\ell,j})$ be the formula expressing ``there is a unique point $a \in D$ such that $a < x < a + \epsilon$, and for this unique point $a$, the formula $\varphi_\ell(x-a, \overline{b}_{\ell,j})$ holds.'' These will be the first two rows of our inp-pattern, and the third row will consist of the pairwise disjoint intervals $I_i = (a_i - \epsilon, a_i + \epsilon)$. The inconsistency of each row is easy to check, and if $c_{i,j} \in (0, \epsilon)$ satisfies the formula $\varphi_0(x, \overline{b}_{0,i}) \wedge \varphi_1(x, \overline{b}_{1,j})$, then for any $k \in \omega$ the point $a_k + c_{i,j}$ satisfies $\varphi'_0(x, \overline{b}_{0,i}) \wedge \varphi'_1(x, \overline{b}_{1,j})$ and lies within the interval $I_k$.
\end{proof}

This lemma has an immediate and useful corollary:

\begin{cor}  
\label{no_Cantor}
If there is $X \subseteq R$ definable, infinite, and discrete then there is no definable Cantor-like set in $\mathcal{R}$.
\end{cor}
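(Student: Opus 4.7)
My plan is to combine the preceding two lemmas to reach a contradiction. Assume $D \subseteq R$ is definable, infinite, and discrete, and suppose for contradiction that $C \subseteq R$ is a definable Cantor-like set, i.e., nonempty, closed, nowhere dense, with no isolated points.

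First I would apply Lemma~\ref{rank_2_intervals} to $D$ to extract some $\epsilon > 0$ such that $(0,\epsilon)$ has burden $1$. Since $+$ is in the language, the translation $x \mapsto x + a$ is a $\emptyset$-definable bijection of $R$ onto itself and so carries any inp-pattern in $x \in (0,\epsilon)$ to an inp-pattern of the same depth in $x \in (a, a+\epsilon)$; in particular, every open interval of length $\epsilon$ in $R$ has burden $1$. Next, I would fix any $c \in C$ and any open interval $I \ni c$ of length at most $\epsilon$, so that $I$ inherits burden $1$. The set $X := C \cap I$ is then definable, and nowhere dense as a subset of the nowhere dense set $C$. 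The key topological point is that $X$ is infinite: since $c$ is a non-isolated point of $C$, any open neighborhood of $c$ must meet $C$ in infinitely many points, for otherwise one could shrink the neighborhood to an open interval containing $c$ and disjoint from the remaining finitely many points of $C$, contradicting non-isolation. But Lemma~\ref{rank_1_interval}, applied inside the burden-$1$ interval $I$, forces $X$ to be finite, which is the desired contradiction.

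I do not expect any serious obstacle; the only point worth a moment's pause is the translation-invariance of burden, which is immediate from the OAG structure being in the language. Everything else follows by routine combination of the two previous lemmas with the definition of a Cantor-like set.
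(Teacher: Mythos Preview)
Your proof is correct and follows essentially the same approach as the paper: both invoke Lemma~\ref{rank_2_intervals} to obtain a burden-$1$ interval and then apply Lemma~\ref{rank_1_interval} to an infinite nowhere-dense piece of the Cantor-like set sitting inside such an interval. The only cosmetic difference is that the paper translates the Cantor-like set into $(0,\epsilon)$, whereas you use translation-invariance of burden to move the burden-$1$ interval to a point of $C$; your version is arguably slightly cleaner since you only need ``infinite and nowhere dense'' rather than arguing the intersection is again Cantor-like.
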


\begin{proof}  
By the previous Lemma, there is $\epsilon > 0$ such that $(0,\epsilon)$ has burden $1$.  Suppose for contradiction that there is $Y$ definable and Cantor-like.  Translating $Y$ as necessary, we may assume that $Y \cap (0,\epsilon) \neq \emptyset$; and intersecting $Y$ with a closed subinterval of $(0, \epsilon)$, we may further assume that there is a Cantor-like definable subset of $(0, \epsilon)$. This contradicts Lemma~\ref{rank_1_interval}.
\end{proof}

Using the above Corollary, we can rule out the existence of any Cantor-like definable set in the case when $\mathcal{R}$ additionally satisfies definable completeness. First we prove a simple lemma. Recall that if $X$ is a subset of an ordered structure, a \emph{convex component} of $X$ is a maximal subset of $X$ which is convex.

\begin{lem}
If $X$ is a Cantor-like subset of $R$, then $R \setminus X$ has infinitely many convex components.
\end{lem}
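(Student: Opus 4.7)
The plan is to argue by contradiction: assume $R \setminus X$ has only finitely many convex components $I_1, \ldots, I_n$, ordered by position. Since $X$ is closed, each $I_j$ is an open interval, possibly an unbounded ray. I want to show that each of the three topological properties (nonempty, nowhere dense, no isolated points) is incompatible with only finitely many such components.

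First I would analyze the structure of $X = R \setminus \bigcup_{j} I_j$. Between two consecutive convex components $I_j$ and $I_{j+1}$, say $I_j = (a_j, b_j)$ and $I_{j+1} = (a_{j+1}, b_{j+1})$ with $b_j \le a_{j+1}$, the interval $[b_j, a_{j+1}]$ lies in $X$ (since no other $I_k$ can lie between consecutive components). If $b_j < a_{j+1}$, the open interval $(b_j, a_{j+1})$ is contained in $X$, giving $X$ nonempty interior and contradicting nowhere density. So $b_j = a_{j+1}$, and the only point of $X$ in the gap between $I_j$ and $I_{j+1}$ is this shared endpoint, which is then isolated in $X$ (its left and right deleted neighborhoods lie in $I_j \cup I_{j+1} \subseteq R \setminus X$). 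This contradicts the hypothesis that $X$ has no isolated points.

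Next I would handle the endpoints. Since $R$ has no extrema (densely ordered OAG), the leftmost component $I_1$ must be of the form $(-\infty, b_1)$ — for otherwise $(-\infty, a_1) \subseteq X$ would give $X$ a nonempty interior — and similarly the rightmost component $I_n$ must be $(a_n, +\infty)$. In particular this forces $n \ge 2$, and $X \subseteq [b_1, a_n]$.

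The main obstacle, if there is one, is simply keeping track of the degenerate cases: $n = 0$ gives $X = R$ (interior, contradiction); $n = 1$ leaves a closed ray or a union of two closed rays in $X$ (interior, contradiction); and the case $n \ge 2$ with both end-components unbounded reduces entirely to the ``consecutive components'' analysis above, producing an isolated point. Putting these together, no finite value of $n$ is compatible with $X$ being nonempty, closed, nowhere dense, and without isolated points, so $R \setminus X$ must have infinitely many convex components.
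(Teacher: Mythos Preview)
Your proof is correct and takes a genuinely different route from the paper's. Rather than arguing by contradiction, the paper fixes a point $a \in X$, observes (using the absence of isolated points) that $a$ is accumulated by $X$ from at least one side, say the right, and then iteratively constructs infinitely many distinct convex components of $R \setminus X$: at each stage, nowhere density produces a point $c_n \in R \setminus X$ in $(a, b_n)$, and right-accumulation at $a$ ensures that the convex component of $c_n$ is bounded away from $a$, allowing the choice of a smaller $b_{n+1}$ and hence a new component at the next stage. Your structural argument --- that finitely many open convex components would force $X$ to decompose into finitely many closed convex ``gaps,'' each either thick (contradicting nowhere density) or a singleton (contradicting no isolated points) --- is arguably cleaner and avoids the iteration entirely.

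One small technical caveat: since $\mathcal{R}$ is not assumed definably complete here (and $X$ is not even assumed definable in this lemma), you should not take for granted that the components $I_j$ have endpoints $a_j, b_j$ in $R$. The fix is immediate: for consecutive components $I_j < I_{j+1}$, work directly with the gap $G_j := \{z \in R : I_j < z < I_{j+1}\}$, a convex subset of $X$. If $|G_j| \geq 2$ then by density $G_j$ contains a nonempty open interval, contradicting nowhere density; if $G_j = \emptyset$ then $I_j \cup I_{j+1}$ is itself convex, contradicting maximality of the components; hence $G_j$ is a singleton, and that point is isolated in $X$ exactly as you argue. With this adjustment your proof goes through without any appeal to suprema existing in $R$.
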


\begin{proof}
Recall that Cantor-like sets are nonempty by definition, so we may pick some $a \in X$. Given that $X$ has no isolated points, it is either the case that (i) for every positive $\epsilon \in R$, the interval $(a-\epsilon, a)$ contains a point of $X$, or (ii)  for every positive $\epsilon \in R$, the interval $(a, a + \epsilon)$ contains a point of $X$. Without loss of generality we assume that (ii) occurs, and in case of (i) a similar proof will work.

Pick some $b_0 > a$. Since $X$ is nowhere dense, there is some element $c_0$ of $R \setminus X$ contained in the interval $(a, b_0)$. Given the assumption (ii) above, the convex component $C_0$ of $c_0$ in $R \setminus X$ cannot contain points arbitrarily close to $a$, an hence there is a point $b_1 \in R$ such that $a < b_1$ and $b_1$ is less than every element of $C_1$. Now repeat the argument above to find $c_1 \in R \setminus X$ contained in $(a, b_1)$, and continuing by induction we may find an infinite sequence of elements $c_1 > c_2 > \ldots$ of $R \setminus X$ all in distinct convex components.

\end{proof}

\begin{thm}
\label{no_cantor_dc} If $\mathcal{R}$ is a densely ordered, definably complete OAG with burden at most $2$, then there is no definable Cantor-like set in $\mathcal{R}$.
\end{thm}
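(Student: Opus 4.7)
Suppose for contradiction that $X \subseteq R$ is a definable Cantor-like set. The plan is to extract from $X$ an infinite definable discrete subset of $R$ and thereby contradict Corollary~\ref{no_Cantor}.

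By the previous lemma, $R \setminus X$ has infinitely many convex components; at most two of these are unbounded, so infinitely many have the form $(b, \lambda(b))$ with both endpoints in $R$. By definable completeness, $\lambda(b) := \inf\{y \in X : y > b\}$ is a well-defined element of $X$ (as $X$ is closed), so the length $\ell(b) := \lambda(b) - b$ is a positive definable function on the (definable, infinite) set $Y$ of left endpoints of bounded components. The key observation is: for every $r > 0$, the definable set $\{b \in Y : \ell(b) \geq r\}$ is discrete, since whenever $b_1 < b_2$ lie in it, the disjointness $(b_1, \lambda(b_1)) \cap X = \emptyset$ together with $b_2 \in X$ forces $b_2 \geq \lambda(b_1) \geq b_1 + r$.

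It then suffices to analyse the length map $\ell : Y \to R_{>0}$ in three cases. (i) If some fiber $\ell^{-1}(\{v\})$ is infinite, it is itself an infinite discrete definable set (take $r = v$ in the key observation). (ii) Otherwise, every fiber of $\ell$ is finite and the image $\ell(Y) \subseteq R_{>0}$ is infinite; if $\ell(Y)$ has a positive accumulation point $v^* \in R$, then using $r = v^*/2$ (which exists by Fact~\ref{DC_divisible}), the set $\{b \in Y : \ell(b) \geq v^*/2\}$ receives at least one representative from each of the infinitely many distinct fibers $\ell^{-1}(\{v\})$ with $v \geq v^*/2$, hence is infinite and (by the key observation) discrete. (iii) If $\ell(Y)$ has no positive accumulation point, then since $\ell(Y) \subseteq R_{>0}$ the only candidate accumulation point in $R$ is $0 \notin \ell(Y)$, so every element of $\ell(Y)$ is isolated in $\ell(Y)$, making $\ell(Y)$ itself an infinite definable discrete subset of $R$. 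Each case contradicts Corollary~\ref{no_Cantor}.

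I do not anticipate a serious obstacle beyond verifying that the three cases exhaust the possibilities and that the length function $\ell$ is genuinely definable — both of which rest on definable completeness providing well-defined infima. The main conceptual point is the key observation: that automatic discreteness of $\{b \in Y : \ell(b) \geq r\}$ is forced by the gap structure of $R \setminus X$, so all the work is in finding an $r$ (or a related definable set) with infinite image.
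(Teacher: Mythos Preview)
Your proof is correct, but the paper takes a shorter route. Instead of analysing the length function $\ell$ on the set $Y$ of left endpoints (which need not itself be discrete, forcing your three-case split), the paper simply collects the \emph{midpoints} $\frac{a+b}{2}$ of the bounded complementary intervals $(a,b)$: this set is immediately infinite (there are infinitely many bounded components by the previous lemma) and immediately discrete (each midpoint lies inside its own complementary interval, which contains no other midpoint), so no case analysis is needed and Corollary~\ref{no_Cantor} applies directly. Both arguments invoke divisibility via Fact~\ref{DC_divisible} --- the paper to halve $a+b$, you to halve $v^*$ in case~(ii) --- though your use is actually inessential: any $r$ with $0 < r < v^*$, available from density alone, would serve just as well there. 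Your approach does make the uniform-gap structure of $\{b \in Y : \ell(b) \geq r\}$ explicit and would adapt to settings where midpoints are unavailable, but here the midpoint trick is simply cleaner.
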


\begin{proof}
Assume otherwise, and let $X \subseteq R$ be a definable Cantor-like set. By the previous lemma, $R \setminus X$ consists infinitely many convex components, each of which is open (as $X$ is closed). By definable completeness, each convex component of $R \setminus X$ is an interval; we call these the \emph{complementary intervals}. All but at most two complementary intervals are bounded, and if $I = (a,b)$ is a bounded complementary interval, then we can use the fact that $\mathcal{R}$ is divisible (Fact~\ref{DC_divisible} above) to define its midpoint $I_m = \frac{a+b}{2}$. The collection of all such midpoints $I_m$ is itself definable and comprises and infinite discrete set, yielding a contradiction to Corollary~\ref{no_Cantor}.
\end{proof}

Now we will focus on the case when $\mathcal{R}$ has dp-rank 2 (which, recall, is equivalent to NIP plus having burden $2$). In this case, we will show that if $R$ is divisible and defines an infinite discrete set, then it cannot also define a set which is dense and codense in some infinite interval.

To this end, we will use the \emph{Shelah expansion} $\mathcal{R}^{Sh}$ of the structure $\mathcal{R}$. This concept, and the facts below, are due to Shelah \cite{Dependent_Shelah}, but we will follow the notation and presentation of Simon \cite{Guide_NIP}.

\begin{definition}
\label{Sh_expansion}
Suppose that $\mathcal{R} \prec \mathcal{U}$ and $\mathcal{U}$ is $|R|^+$ -saturated. The \emph{Shelah expansion} $\mathcal{R}^{Sh}$ of $\mathcal{R}$ is the expansion of $\mathcal{R}$ with the following new predicates: for every partitioned formula $\varphi(\overline{x}; \overline{y})$ and every finite tuple $\overline{b} \in U^{|\overline{y}|}$, define a predicate $S_{\varphi(\overline{x}; \overline{b})} (\overline{x})$ on $R^{|\overline{x}|}$ such that $$\mathcal{R}^{Sh} \models S_{\varphi(\overline{x}; \overline{b})} (\overline{a}) \Leftrightarrow \mathcal{U} \models \varphi(\overline{a}; \overline{b}).$$

\end{definition}

The subsets of $R^n$ defined by the new basic predicates $ S_{\varphi(\overline{x}; \overline{b})}$ as above are called \emph{externally definable sets}. An important example for our purposes is that if $C \subseteq R$ is convex, then using $|R|^+$-saturation we may find $a, b \in U$ such that $(a,b) \cap R = C$, and thus $C$ is externally definable.

The next fact summarizes the important basic properties of Shelah expansions.

\begin{fact}
\label{Sh_expansion_properties}
(\cite{Dependent_Shelah}, and see also \cite{Guide_NIP}) Suppose that the complete theory of $\mathcal{R}$ is NIP.
\begin{enumerate}
\item The subsets of $R^n$ which are definable in the Shelah expansion $\mathcal{R}^{Sh}$ are independent of the choice of the saturated extension $\mathcal{U}$ in Definition~\ref{Sh_expansion}, and hence we may talk about ``the'' Shelah expansion.
\item The structure $\mathcal{R}^{Sh}$ admits elimination of quantifiers.
\item The structure $\mathcal{R}^{Sh}$ is NIP.
\item If $X \subseteq R^n$ is type-definable in $\mathcal{R}$, then the dp-rank of $X$ as calculated in $\mathcal{R}$ is equal to the dp-rank of $X$ as calculated in $\mathcal{R}^{Sh}$.
\end{enumerate}
\end{fact}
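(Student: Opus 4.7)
The plan is to derive all four items from Shelah's honest definition theorem for NIP theories, which is the technical backbone. Item (1) is essentially formal: the subset $\varphi(R^n;\overline{b})$ depends only on $\tp(\overline{b}/R)$, and any $|R|^+$-saturated elementary extension of $\mathcal{R}$ realizes every complete type over $R$, so the collection of externally definable subsets of $R^n$ does not depend on which saturated $\mathcal{U}$ is chosen in Definition~\ref{Sh_expansion}.

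For items (2) and (3) the main tool is the honest definition theorem: given a formula $\varphi(\overline{x};\overline{b})$ with $\overline{b}$ external and any finite $A\subseteq R^{|\overline{x}|}$, there exists a formula $\psi(\overline{x};\overline{c})$ with $\overline{c}\in R$ such that $\psi(R;\overline{c})\subseteq\varphi(R;\overline{b})$ while $\psi$ and $\varphi$ agree on $A$. From this one shows by induction on formula complexity that every first-order formula of $\mathcal{R}^{Sh}$ is equivalent to a quantifier-free one; the crucial case is absorbing an existential quantifier into a single new basic predicate by means of an honest definition. For (3), once QE is available it suffices to prove that each basic predicate is NIP, and any candidate IP instance for such a predicate pulls back to an IP instance for the defining formula in $\mathcal{U}$, contradicting NIP of $\mathcal{R}$.

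For item (4), one inequality is immediate: every set definable in $\mathcal{R}$ is definable in $\mathcal{R}^{Sh}$, so every ict-pattern of depth $\kappa$ in $\mathcal{R}$ lying in $X$ remains one in $\mathcal{R}^{Sh}$, and hence the dp-rank of $X$ computed in $\mathcal{R}^{Sh}$ is at least that computed in $\mathcal{R}$. For the reverse direction, given an ict-pattern in $\mathcal{R}^{Sh}$ inside the type-definable set $X$, QE lets us assume that each row uses a basic predicate $S_{\theta_i(\overline{x};\overline{b}_{i,j})}$; passing to the saturated extension $\mathcal{U}$ and replacing each external $\overline{b}_{i,j}$ by the actual tuple it names, the same array witnesses an ict-pattern for the formulas $\theta_i$ in $\mathcal{U}$, hence in $\mathcal{R}$, and the pattern remains in $X$ by type-definability. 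The main obstacle throughout is the honest definition theorem behind (2); the other items reduce to routine applications of compactness, saturation, and bookkeeping once honest definitions are in hand.
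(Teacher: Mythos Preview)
Your approach to parts (1)--(3) is fine as a sketch (the paper simply cites \cite{Guide_NIP} for these), and your overall strategy for (4) matches the paper's: show $d_1 \leq d_2$ trivially, then for $d_2 \leq d_1$ use QE to reduce each row to a single basic predicate and pass to the underlying $\mathcal{L}$-formula in $\mathcal{U}$.

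There is, however, one genuine gap in your treatment of the reverse inequality in (4). You write ``given an ict-pattern in $\mathcal{R}^{Sh}$'' and then speak of ``replacing each external $\overline{b}_{i,j}$ by the actual tuple it names,'' as though the row parameters of the pattern live in $R$ and can be absorbed into $\mathcal{U}$. But an ict-pattern witnessing dp-rank $\geq \kappa$ need only exist in \emph{some} elementary extension $\mathcal{R}' \succ \mathcal{R}^{Sh}$, and there is no reason $\mathcal{R}^{Sh}$ itself is saturated in its (much larger) language. Your notation $S_{\theta_i(\overline{x};\overline{b}_{i,j})}$ conflates two different things: the fixed $\mathcal{U}$-tuple that is part of the predicate \emph{symbol}, and the varying row parameters of the ict-pattern. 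Once those row parameters sit in $\mathcal{R}'$ rather than in $R$, they are not ``external'' tuples from $\mathcal{U}$ at all, so the passage to $\mathcal{U}$ you describe does not literally go through.

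The paper handles this exactly as one would expect: write the $i$-th row formula as $S_{\psi_i(\overline{x};\overline{y};\overline{b}_i)}(\overline{x};\overline{y})$ with $\overline{b}_i \in \mathcal{U}$ fixed and $\overline{y}$ the genuine parameter variable; then for each finite $m \times n$ sub-array and each formula $\theta(\overline{x})$ in the type defining $X$, use elementarity $\mathcal{R}^{Sh} \prec \mathcal{R}'$ to find row parameters $\overline{c}_{i,j} \in R$ and witnesses $\overline{d}_\eta \in R$ realizing the pattern; now evaluate these in $\mathcal{U}$ via $\psi_i$, and conclude by compactness that $\mathcal{U}$ carries a full ict-pattern of depth $\kappa$ in $X$. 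Inserting this finite-sub-array-plus-compactness step into your sketch closes the gap and makes your argument essentially identical to the paper's.
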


\begin{proof}
While complete proofs of (1), (2), and (3) can be found in \cite{Guide_NIP}, we take the opportunity to explain how (4) follows from (2) and (3). Note that a fact similar to (4) has been claimed by Onshuus and Usvyatsov (see \cite{onsh_usv}) but only in the special case when the theory is dp-minimal.

Let $d_1$ be the dp-rank of $X$ as calculated in the original structure $\mathcal{R}$, and let $d_2$ be the dp-rank of $X$ as calculated in the Shelah expansion $\mathcal{R}^{Sh}$ (each of which exists since their theories are NIP). On the one hand, if $\kappa$ is any cardinal and $d_1 \geq \kappa$, then there is an ict-pattern of depth $\kappa$ in $X$ with parameters from some elementary extension of $\mathcal{R}$, and the same array of formulas is an ict-pattern of depth $\kappa$ in the expanded language of $\mathcal{R}^{Sh}$. Therefore $d_1 \leq d_2$. 

On the other hand, suppose that there is an ict-pattern 

\begin{equation}
\{\varphi_i(\overline{x}; \overline{a}_{i,j}) \, : \, i < \kappa, \, j < \omega \}
\end{equation}
of depth $\kappa$ consistent with $X$ in some elementary extension $\mathcal{R}'$ of $\mathcal{R}^{Sh}$. By quantifier elimination, we may assume that
 $$\varphi_i(\overline{x}; \overline{y}) = S_{\psi_i(\overline{x}; \overline{y}; \overline{b}_i)}(\overline{x}; \overline{y})$$
  where $\overline{b}_i$ is a tuple of parameters from the $|R|^+$-saturated model $\mathcal{U} \succ \mathcal{R}$ used to define $\mathcal{R}^{Sh}$.

Working in $\mc{R}^{Sh}$, for any $m,n \in \NN$, any $m$-element subset $\{l_1, \ldots, l_m\}$ of $\kappa$, and any formula $\theta(\overline{x})$ in $X$, there are parameters $\ob{c}_{i,j} \in R$ with $1 \leq i \leq  m$ and $1 \leq j \leq n$ so that 
for any $\eta: \{1, \dots, m\} \to \{1, \dots, n\}$ there is  $\ob{d}_{\eta} \in \theta(R)$ so that 
$$\mc{R}^{Sh} \models  S_{\psi_{l_i}(\overline{x}; \overline{y}; \overline{b}_{l_i})}(\overline{d}_{\eta}; \overline{c}_{i,j}) \text{ if and only if } \eta(i)=j.$$
But then
\[\mc{U} \models \psi_{l_i}(\ob{d}_{\eta}, \ob{c}_{i,j}, \ob{b}_{l_i}) \text{ if and only if } \eta(i)=j.\]
Thus by compactness, in $\mc{U}$ the formulas $\psi_i(\ob{x}, \ob{y}, \ob{b}_i)$ form an ict-pattern with $\kappa$ rows consistent with $X$ and hence $d_1 \geq d_2$.

\end{proof}

The next fact is a slight generalization of a theorem proved by Simon \cite{S} which we will need for what follows.

The fact below was proved by Simon \cite{S} in the case when the entire structure is a dp-minimal divisible OAG, but the same proof can be relativized to convex definable subgroups $G$ to yield:

\begin{fact}
\label{Simon_interior}
If $G \subseteq R$ is a type-definable convex divisible subgroup of $R$, $\textup{dp-rk}(G) = 1$, and $X \subseteq G$ is infinite and definable, then $X$ has nonempty interior.
\end{fact}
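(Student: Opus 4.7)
The plan is to adapt Simon's proof from \cite{S} essentially verbatim, working inside $G$ rather than inside all of $R$. All of the tools that Simon uses are available: $G$ inherits a divisible densely-ordered group structure from $R$, and the hypothesis $\dpr(G) = 1$ plays the role that dp-minimality of the ambient theory played originally. Any ict-pattern of depth $2$ with parameters in $G$ that is consistent with $G$ would contradict $\dpr(G) = 1$, so it suffices to produce such a pattern from any putative counterexample $X$.

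Suppose toward contradiction that $X \subseteq G$ is infinite, definable, and has empty interior in $G$ (equivalently, in $R$, since $G$ is convex). Infiniteness of $X$ combined with empty interior forces $X$ to have an accumulation point $a \in G$, and empty interior further forces $G \setminus X$ also to accumulate at $a$, for otherwise $X$ would contain a one-sided neighborhood of $a$. Translating by $-a$ inside $G$ using the group operation, we may assume that $0$ is a two-sided accumulation point of both $X$ and its complement, so that $X$ is dense and codense in some definable open interval $I = (-\delta,\delta) \subseteq G$.

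From this dense-codense behavior I would build an ict-pattern of depth $2$ using the single formula $\varphi(x;y) := (x + y \in X)$. One inductively selects two sequences of parameters $(c_{1,j})_{j<\omega}$ and $(c_{2,j})_{j<\omega}$ in $G$ tending to $0$ at two very different ``scales'' -- with the second-row scale chosen much finer than the first -- using divisibility of $G$ to halve distances. The requirement is that for every $\eta : 2 \to \omega$ the finite partial type
\[
\varphi(x;c_{1,\eta(1)}) \wedge \bigwedge_{j \neq \eta(1)} \neg\varphi(x;c_{1,j}) \wedge \varphi(x;c_{2,\eta(2)}) \wedge \bigwedge_{j \neq \eta(2)} \neg\varphi(x;c_{2,j})
\]
has a realization inside $I$. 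This is arranged by using dense-codenseness of $X$ at every scale together with the separation in magnitude between the two rows, so that the first-row conditions on shifts by $c_{1,j}$ do not interact with the second-row conditions on shifts by $c_{2,j}$. The resulting ict-pattern of depth $2$ contradicts $\dpr(G) = 1$, completing the proof.

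The main obstacle is precisely the construction of this pattern: one must arrange the two scales of parameters so that the constraints from the two rows are genuinely independent, i.e.\ so that all $\omega^2$ branches of $\eta$ are simultaneously realizable. This is the delicate step of Simon's original argument, handled by combining dense-codenseness with divisibility -- the scale of the $c_{2,j}$ is made so much smaller than that of the $c_{1,j}$ that shifts by $c_{2,j}$ cannot disturb whether membership in $X - c_{1,j'}$ holds near a chosen candidate point. A minor technical caveat is that $G$ is merely type-definable rather than definable, but the entire pattern and all of its parameters live inside the definable interval $I \subseteq G$, so consistency with $G$ is automatic and no extra care is needed.
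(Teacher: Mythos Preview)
Your approach diverges from the paper's in a significant way. The paper does not rerun Simon's argument inside $G$; instead it observes that $G$, being convex, is externally definable, passes to the Shelah expansion $\mathcal{R}^{Sh}$ (where dp-rank is preserved by Fact~\ref{Sh_expansion_properties}(4)), and then considers $G$ as a standalone structure with the induced definable structure. That structure is a dp-minimal divisible OAG in its own right, so Simon's Theorem~3.6 applies to it directly as a black box. This sidesteps entirely the issue of $G$ being only type-definable in $\mathcal{R}$.

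Your direct attempt has a genuine gap in the reduction step. From ``$X$ is infinite with empty interior'' you correctly extract an accumulation point $a$ of $X$, and correctly note that $G\setminus X$ must also accumulate at $a$ from at least one side. But you then assert that $X$ is \emph{dense and codense in some interval $(-\delta,\delta)$}, and this does not follow: a set can accumulate at $0$ (even two-sidedly, which you have not established either) without being dense in any neighborhood of $0$. A nowhere-dense set such as a convergent sequence, or a Cantor-like set, satisfies your hypotheses up to that point but is dense in no interval. Simon's actual argument does not pass through such a reduction; it proceeds via the equivalence relation $\sim_X$ and an analysis of type-definable subgroups (compare Lemmas~\ref{single_class_reduction} and~\ref{single_class_group} of this paper, which adapt that part of Simon's proof).

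The second half of your sketch is also problematic. Your proposed ict-pattern uses the single formula $x+y\in X$ in both rows with parameters at ``two scales,'' arguing that small shifts $c_{2,j}$ cannot disturb membership in $X-c_{1,j'}$. But $X$ is dense and codense, so membership in $X$ is unstable under arbitrarily small perturbations; there is no scale separation that makes the two rows independent in the way you describe. Producing a depth-$2$ ict-pattern from translates of a dense-codense set is genuinely delicate (indeed, Theorem~\ref{IP_translations} later in the paper shows that such translates witness IP only under additional hypotheses), and your outline does not supply the mechanism.
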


\begin{proof}
Suppose that $G$ is as in the statement (type-definable, convex, divisible, and with $\textup{dp-rk}$ $1$).  By the comment just before Fact~\ref{Sh_expansion_properties}, the set $G$ is externally definable. By Fact~\ref{Sh_expansion_properties}~(4), working in $\mathcal{R}^{Sh}$, the dp-rank of $G$ is still $1$. Now consider the structure $\mathcal{G}$ whose universe is the set $G$ and with the induced definable structure: that is, the basic predicates in the language for $\mathcal{G}$ represent sets of the form $G^n \cap Z$ where $Z \subseteq R^n$ is $\emptyset$-definable in the language of $\mathcal{R}$. The fact that $\textup{dp-rk}(G) = 1$ as calculated in $\mathcal{R}^{Sh}$ implies that the theory of the structure $\mathcal{G}$ is dp-minimal. Also the set $X \subseteq G$ is definable in $\mathcal{G}$. Now we can apply Theorem~3.6 of \cite{S} to conclude that $X$ has nonempty interior, as we wanted.
\end{proof}

Now we state our main result on divisible OAGs of dp-rank $2$.

\begin{thm}\label{sec2main}  If $\mathcal{R}$ is an expansion of a divisible ordered Abelian group of dp-rank at most $2$ and there is an infinite definable discrete set in $\mathcal{R}$ then there is no $X$ definable which is dense and codense in an interval $I$ of $\mathcal{R}$ nor is there a definable Cantor-like set.
\end{thm}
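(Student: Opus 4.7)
The plan is to exploit Fact~\ref{Simon_interior} after constructing a type-definable convex divisible subgroup $G$ of dp-rank $1$ on which the putative dense-codense set $X$ restricts to something still dense and codense. The Cantor-like clause of the theorem is immediate from Corollary~\ref{no_Cantor}, so the real work concerns ruling out a definable dense-codense set.

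First, I invoke Lemma~\ref{rank_2_intervals} to obtain $\epsilon > 0$ such that $(0, \epsilon)$ has burden $1$; since $\mathcal{R}$ is NIP (dp-rank at most $2$), burden and dp-rank coincide, and by translation any sufficiently short open interval about $0$ likewise has dp-rank $1$. After translating $X$ and shrinking, I may assume that $X$ is dense and codense in some $(-\delta, \delta)$ of dp-rank $1$, and after expanding the language by a unary predicate for $X$ (which preserves NIP and dp-rank) I may further assume $X$ is $\emptyset$-definable.

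Next, passing to a sufficiently saturated elementary extension $\mathcal{R}^* \succ \mathcal{R}$, I introduce
\[
G \;=\; \bigcap_{n \geq 1} (-\delta/n,\, \delta/n).
\]
Routine verifications (using divisibility of $\mathcal{R}$ and the estimate $|x + y| < \delta/n$ from $|x|, |y| < \delta/(2n)$) show that $G$ is a type-definable, convex, divisible subgroup of $R^*$ containing nonzero elements by saturation. Since $G \subseteq (-\delta, \delta)$ and dp-rank is monotone on (type-)definable subsets and unchanged by the Shelah expansion (Fact~\ref{Sh_expansion_properties}(4)), $G$ has dp-rank exactly $1$. The key observation is then that $X \cap G$ is dense and codense in $G$: given any $a < b$ in $G$, convexity of $G$ forces $(a, b) \subseteq G$, so density (respectively codensity) of $X$ in $(-\delta, \delta)$ provides a point of $X$ (respectively of its complement) inside $(a, b) \subseteq G$. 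Hence $X \cap G$ is infinite, definable in the induced structure on $G$, and has empty interior in $G$, contradicting Fact~\ref{Simon_interior}.

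I expect the trickiest part of executing this plan to be the bookkeeping around definability, since $G$ is only type-definable in $\mathcal{R}$ while Fact~\ref{Simon_interior} concerns sets definable in the induced structure on $G$. Arranging for $X$ to be $\emptyset$-definable in $\mathcal{R}$ is precisely what makes $X \cap G$ a basic predicate in that induced structure, and Fact~\ref{Sh_expansion_properties}(4) is what ensures the dp-rank computation for $G$ transfers to the structure in which Fact~\ref{Simon_interior} is formulated.
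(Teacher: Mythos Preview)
Your proposal is correct and follows essentially the same route as the paper: dispose of the Cantor-like case via Corollary~\ref{no_Cantor}, use Lemma~\ref{rank_2_intervals} to obtain a short interval of dp-rank~$1$, form the type-definable convex divisible subgroup $G = \bigcap_{n} (-\epsilon/n,\epsilon/n)$, and contradict Fact~\ref{Simon_interior} by exhibiting an infinite definable subset of $G$ with empty interior. Your additional bookkeeping (naming $X$ by a predicate to make it $\emptyset$-definable, and passing explicitly to a saturated extension) is absent from the paper's proof but harmlessly addresses a point the paper leaves implicit, namely that the relevant trace of $X$ must be definable in the induced structure on $G$.
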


\begin{proof}
Say $X \subseteq R$ is and $\mathcal{R}$-definable. By Corollary~\ref{no_Cantor}, we know that $X$ is not Cantor-like. It only remains to consider the case when $X$ is dense in some interval $I$.

First pick $\epsilon \in R$ as in the conclusion of Lemma~\ref{rank_2_intervals} so that $(-\epsilon, \epsilon)$ has dp-rank $1$. Now let $G$ be the subset of $R$ defined as

$$G = \bigcap_{n \in \N} \left(-\frac{\epsilon}{n+1}, \frac{\epsilon}{n+1} \right)$$
and note that $G$ is a convex subgroup of $R$. By $\omega$-saturation, $G$ is infinite. Also, $G$ is definable in $\mathcal{R}^{Sh}$, so by Fact~\ref{Sh_expansion_properties}, it has dp-rank $1$ as calculated in the Shelah expansion.

Suppose that $X$ is dense and codense in $I$. Translating and truncating $I$ as necessary, we may assume that $I \subseteq G$. But then $X \cap I$ is an infinite definable subset of $G$ which has empty interior, contradicting Fact~\ref{Simon_interior}.

\end{proof}

We have considerably more precise results if we also assume that $\mathcal{R}$ is definably complete.

\begin{cor}\label{discrete_interior_dichotomy}
Suppose that $\mathcal{R}$ is an expansion of a definably complete divisible OAG of dp-rank at most $2$ in which an infinite discrete set is definable. Then for any definable $X \subseteq R$, either $X$ is discrete or $X$ has nonempty interior.
\end{cor}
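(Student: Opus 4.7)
The plan is to prove the contrapositive: assume $X \subseteq R$ is definable but not discrete, and show that $X$ has nonempty interior. Let $X_2$ be the (definable, automatically discrete) set of points of $X$ that are isolated in $X$, and set $Z = X \setminus X_2$; then $Z$ is definable, and nonempty by assumption. The overall strategy is to show that $Z$ must exhibit either dense-codense behavior in some interval or Cantor-like behavior on its closure, and then invoke Theorem~\ref{sec2main} or Corollary~\ref{no_Cantor} for a contradiction under the assumption that $X$ has empty interior. This mirrors the informal trichotomy discussed in the introduction of the paper.

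The technical heart is the claim that $Z$ has no isolated points. If some $p \in Z$ were isolated in $Z$, with witness $(p - \epsilon, p + \epsilon)$, then because $p$ is a non-isolated point of $X$, every smaller interval around $p$ contains a point of $X \setminus \{p\}$; such a point lies in $(p - \epsilon, p + \epsilon)$ and hence must belong to $X_2$ (since the witness interval hits $Z$ only at $p$). Thus $p$ would be an accumulation point of $X_2$. But $X_2$ is definable and discrete, and by \cite[Corollary~2.13]{DG} (quoted in the introduction) an infinite definable discrete set in $\mathcal{R}$ has no accumulation points in $R$; finite sets have none trivially. This contradicts the existence of $p$, so $Z$ has no isolated points. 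A quick check then shows that the closure $\overline{Z}$ (definable via the standard formula in the order topology) also has no isolated points: an isolated point lying in $Z$ would be isolated in $Z$, and an isolated point lying outside $Z$ would contradict being in the closure.

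Now $\overline{Z}$ is a definable nonempty closed set with no isolated points. If it had empty interior it would be Cantor-like, contradicting Corollary~\ref{no_Cantor} (whose hypothesis is met, since an infinite definable discrete set is assumed to exist). So $U := \textup{int}(\overline{Z})$ is a nonempty definable open set and contains some open interval $I$. Assuming $X$ has empty interior, $Z \cap I$ would be dense in $I$ (because $I \subseteq \overline{Z}$) and codense in $I$ (because $Z \subseteq X$ has empty interior), contradicting Theorem~\ref{sec2main}. Therefore $X$ must have nonempty interior. The main obstacle is the ``no isolated points of $Z$'' step, which depends crucially on the no-accumulation-points result for definable discrete sets; everything else is a transparent assembly of results already established in this section, and definable completeness enters only indirectly, via its role in securing the ambient hypotheses.
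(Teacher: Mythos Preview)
Your proof is correct and uses the same ingredients as the paper's two-line argument: Theorem~\ref{sec2main} (ruling out both dense-codense and Cantor-like sets) together with \cite[Corollary~2.13]{DG} (definable discrete sets have no accumulation points). The paper argues directly---assuming $X$ has empty interior, it invokes Theorem~\ref{sec2main} to get that $X$ is nowhere dense and then cites \cite[Corollary~2.13]{DG} to conclude $X$ is discrete---whereas you argue the contrapositive and make the hidden intermediate steps (forming $Z$, showing $\overline{Z}$ has no isolated points, and splitting on whether $\overline{Z}$ has interior) fully explicit; this is just a detailed unpacking of the same argument.
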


\begin{proof}
Suppose that $X \subseteq R$ is definable and has empty interior. By Theorem~\ref{sec2main}, $X$ is nowhere dense. By Corollary~2.13 of \cite{DG}, it follows that $X$ is discrete.
\end{proof}

\begin{cor}\label{opencore}
Suppose that $\mathcal{R}$ is a definably complete expansion of a divisible ordered Abelian group with dp-rank at most $2$.  If there is $X \subseteq R$ definable which is dense and codense in some interval then any model of $T=\Th{(\mc{R})}$ has o-minimal open core.
\end{cor}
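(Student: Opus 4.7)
The plan is to combine Theorem~\ref{sec2main}, Theorem~\ref{no_cantor_dc}, and the trichotomy described in the introduction to show that every definable open subset of $R$ is a finite union of open intervals in every model of $T$; by a standard characterization of o-minimal open core (due to Dolich, Miller, and Steinhorn), this is equivalent to $T$ having o-minimal open core.

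Under our hypotheses, the existence of the dense-codense definable set $X$ combined with Theorem~\ref{sec2main} precludes any infinite definable discrete subset of $R$, and Theorem~\ref{no_cantor_dc} precludes any definable Cantor-like set. Applying the trichotomy, every definable $Y \subseteq R$ with empty interior admits a partition $Y = Y_1 \cup Y_2 \cup Y_3$, where $Y_1$ is either empty or dense and codense in some open definable $U_1 \supseteq Y_1$, $Y_2$ is discrete, and $\overline{Y_3}$ is either empty or Cantor-like. The two non-existence results force $Y_2$ to be finite and $Y_3$ to be empty.

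Next I would show that every closed definable $C \subseteq R$ with empty interior is finite. Writing $C = Y_1 \cup Y_2$ as above, density of $Y_1$ in $U_1$ together with closedness of $C$ yields $U_1 \subseteq \overline{Y_1} \subseteq C$; the empty interior of $C$ forces $U_1 = \emptyset$ and hence $Y_1 = \emptyset$, leaving $C = Y_2$ finite. Applied to the boundary $\partial U$ of any open definable $U \subseteq R$ (which is closed with empty interior), this gives $\partial U$ finite, say $\partial U = \{p_1 < \ldots < p_k\}$. A short argument using definable completeness then shows $U$ is either entirely contained in or entirely disjoint from each of the complementary intervals $(-\infty, p_1), (p_1, p_2), \ldots, (p_k, +\infty)$, so $U$ is a finite union of open intervals.

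The main obstacle I anticipate is transferring this conclusion from $\mathcal{R}$ itself to an arbitrary model $\mathcal{R}' \models T$, since the argument above used $\omega$-saturation. I would handle this by proving a uniform bound: for each formula $\varphi(x; \overline{y})$, an $n_\varphi \in \N$ such that whenever $\varphi(R; \overline{a})$ is open, it is a union of at most $n_\varphi$ intervals. The existence of such $n_\varphi$ follows by a routine compactness argument, since otherwise $\omega$-saturation of $\mathcal{R}$ would yield parameters $\overline{a}^*$ with $\varphi(R; \overline{a}^*)$ open and having infinitely many convex components, contradicting the preceding paragraph. The uniform-bound statement is first-order (for each $\varphi$), and the hypotheses of the theorem -- divisibility, definable completeness, dp-rank at most $2$, and the existence of a dense-codense definable set -- are all elementary, so the uniform bound passes to every $\mathcal{R}' \models T$. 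Hence every such $\mathcal{R}'$ has every definable open subset of its universe a finite union of intervals, and by the characterization mentioned above, $\mathcal{R}'$ has o-minimal open core.
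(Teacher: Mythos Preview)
Your argument is correct, but it takes a longer and more hands-on route than the paper's proof. The paper simply observes that the theory has \emph{uniform finiteness}: if some formula $\varphi(x;\overline{y})$ defined arbitrarily large finite sets, then (since ``$\varphi(x;\overline{y})$ defines a discrete set'' is first-order) compactness and saturation would yield an instance defining an infinite discrete set, contradicting Theorem~\ref{sec2main}. Then \cite[Theorem~A]{DMS} (definable completeness plus uniform finiteness implies o-minimal open core) finishes immediately.

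By contrast, you invoke the trichotomy from the introduction together with Theorem~\ref{no_cantor_dc} to analyze closed definable sets with empty interior directly, deduce that boundaries of open definable sets are finite, and only then run a compactness argument to obtain uniformity. This is a genuinely different path: it trades the single black-box citation of \cite[Theorem~A]{DMS} for a more explicit topological analysis (at the cost of also needing Theorem~\ref{no_cantor_dc}, which the paper's proof does not use). The paper's approach is shorter and isolates exactly the one model-theoretic hypothesis (uniform finiteness) needed to feed into the DMS machinery; your approach has the virtue of making the structure of definable open sets visible without unpacking what \cite[Theorem~A]{DMS} says.
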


\begin{proof}
We claim that the theory of $\mathcal{R}$ has uniform finiteness: otherwise, there would exist a formula $\varphi(x; \overline{y})$ such that for every $n \in \N$, there are parameters $\overline{b}_n$ such that the set defined by $\varphi(x; \overline{b}_n)$ is finite and of size at least $n$. Note that there is a first-order formula $\theta(\overline{y})$ which expresses the property ``the set defined by $\varphi(x; \overline{y})$ is topologically discrete,'' and since any finite set is discrete, $\theta(\overline{b}_n)$ holds for each $n$. Thus every finite subset of

$$\{\exists^{\geq m} x \, \,  \varphi(x; \overline{y}) \, : \, m \in \N \} \cup \{\theta(\overline{y}) \}$$
is satisfiable by some tuple $\overline{b}_n$, and so by compactness and $\omega$-saturation there is a tuple $\overline{b}$ such that $\varphi(x; \overline{b})$ defines an infinite discrete set, contradicting Theorem~\ref{sec2main}.

Therefore by~\cite[Theorem A]{DMS}, $\mathcal{R}$ has o-minimal open core.
\end{proof}

We note that Theorem \ref{sec2main} no longer holds once the dp-rank of $\mathcal{R}$ exceeds $2$: for example, in \cite{DG} we have shown that the structure $\langle \R; +, <, \Q, \Z\rangle$ has dp-rank $3$.

For the remainder of this section, we return to the general situation when $\mc{R}$ is only assumed to have burden $2$ (rather than dp-rank $2$). Once again adapting arguments of Simon from \cite{S} allows us to obtain information on dense-codense definable sets in the case when there is also an infinite definable discrete set.

We begin with a definition.

\begin{definition}
\label{bounded_away_zero}
Let $X \subseteq R$ be definable and $f:X \to \ob{R}$ be a definable function from $X$ to some sort in the Dedekind completion $\ob{R}$ of $\mathcal{R}$ (see Definition~\ref{ded_sort} above) with $f(x)>0$ for all $x \in X$.   We say that \emph{$f$ is  bounded away from $0$} if whenever $I$ is an interval so that $I \cap X$ is infinite there is $\epsilon>0$ and a subinterval $J \subseteq I$ with $J \cap X$ infinite so that $f(a)>\epsilon$ for all $a \in J$.  

We say "functions on $X$ are bounded away from $0$" to mean that all definable $f: X \to \ob{R}$ with $f$ positive on $X$ are bounded away from $0$.
\end{definition}

\begin{lem}
\label{bdd_away_from_zero_bd_1}
If $X \subseteq R$ has burden $1$, then definable functions on $X$ are bounded away from $0$.
\end{lem}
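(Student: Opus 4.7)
I would prove the contrapositive: assume $f \colon X \to \overline{R}$ is a definable positive function that is \emph{not} bounded away from $0$, and construct an inp-pattern of depth $2$ consistent with $x \in X$, contradicting the burden hypothesis. Let $I \subseteq R$ witness the failure, so that $I \cap X$ is infinite and for every subinterval $J \subseteq I$ with $J \cap X$ infinite and every positive $\epsilon \in R$ there is $a \in J \cap X$ with $f(a) < \epsilon$.

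My first step is a strengthening of this witnessing property: for every such $J$ and every $\epsilon > 0$, the set $\{a \in J \cap X : f(a) < \epsilon\}$ is in fact \emph{infinite}. Otherwise it would be some finite set $F$; removing a small open neighborhood of each of the finitely many points of $F$ from $J$ yields a sub-subinterval $J' \subseteq J$ with $J' \cap X$ still infinite and $f \geq \epsilon$ on all of $J' \cap X$, contradicting the failure hypothesis applied to $J'$ and $\epsilon/2$.

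Next, the plan is to construct the inp-pattern using the two formulas
\[
\varphi_1(x; y, y') \equiv (y \leq x \leq y') \quad \text{and} \quad \varphi_2(x; z, z') \equiv (z \leq f(x) \leq z'),
\]
each of which becomes $2$-inconsistent once the parameters trace out pairwise disjoint intervals. By compactness it suffices to produce, for arbitrarily large $n$, pairwise disjoint $x$-intervals $I_1, \dots, I_n \subseteq I$ and pairwise disjoint $f$-value intervals $K_1, \dots, K_n \subseteq \overline{R}_{>0}$ such that for every pair $(j,k)$ some $a \in X$ satisfies both $a \in I_j$ and $f(a) \in K_k$.

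The main obstacle is exactly this grid-consistency: the $f$-value windows $K_k$ must be realized by $f$ on every one of the $x$-intervals $I_j$ simultaneously. To overcome this I would first choose the $I_j$'s disjoint with each containing enough $X$-points (using $\omega$-saturation; if $X \cap I$ refuses to spread out and instead accumulates at a single point of $\overline{I}$, take the $I_j$'s to shrink towards that cluster point so that each still captures many $X$-points with arbitrarily small $f$-values), and then apply the strengthened witnessing property from the first step inside each $I_j$ to locate the $K_k$'s as small positive intervals lying in the common set of $f$-values realized in every $I_j$. The delicate point, which is where the $\dpr / \text{burden}$ hypothesis really enters, is verifying that this common pool of shared small $f$-values is rich enough to pick $n$ disjoint such windows; this is what the first-step strengthening is designed to provide.
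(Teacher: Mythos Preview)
Your overall architecture matches the paper's proof exactly: assume $f$ is not bounded away from $0$ on some interval $I$, and build a depth-$2$ inp-pattern in $X$ whose first row consists of pairwise disjoint subintervals of $I$ and whose second row consists of pairwise disjoint $f$-value windows. The issue is that you have not actually carried out the one genuinely nontrivial step, and the tool you propose for it (your ``strengthening'') does not do the job.

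The difficulty you correctly flag is grid-consistency: you need $f$-value windows $K_1,\dots,K_n$ each of which is hit by $f$ on \emph{every} $I_j$. Your strengthening only says that, for each $I_j$ separately and each $\epsilon>0$, there are infinitely many $a\in I_j\cap X$ with $f(a)<\epsilon$. This gives no control over \emph{where} in $(0,\epsilon)$ those $f$-values lie, and in particular gives no reason for the sets $f(I_1\cap X),\,f(I_2\cap X),\dots$ to share any common sub-window at all. So the sentence ``this is what the first-step strengthening is designed to provide'' is not justified; the strengthening is neither necessary nor sufficient here. (Incidentally, the strengthening's proof also has a small slip: removing open neighborhoods of finitely many points from $J$ produces a finite union of subintervals, not a single subinterval, and you would still need to argue one of them meets $X$ infinitely.)

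What actually resolves the grid-consistency is an \emph{interleaved} recursion, which is how the paper proceeds. Having fixed pairwise disjoint $J_i$ with $J_i\cap X$ infinite, pick $\epsilon_0>0$ arbitrarily and, using only the original failure hypothesis, choose $a_{i,0}\in J_i\cap X$ with $f(a_{i,0})<\epsilon_0$ for every $i$. \emph{Now} choose $\epsilon_1$ with $0<\epsilon_1<f(a_{i,0})$ for all $i$ (by saturation in the infinite case, or just taking a minimum in your finite-$n$ reduction), so that every $a_{i,0}$ lands in the common window $(\epsilon_1,\epsilon_0)$. Iterate: pick $a_{i,1}\in J_i\cap X$ with $f(a_{i,1})<\epsilon_1$, then pick $\epsilon_2$ below all those values, and so on. The point is that each threshold $\epsilon_{k+1}$ is chosen \emph{after} the witnesses $a_{i,k}$, which forces them into the common window $(\epsilon_{k+1},\epsilon_k)$. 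This is the missing idea in your sketch; once you insert it, the rest of your outline goes through and coincides with the paper's argument.
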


\begin{proof}
We will adapt the proof of \cite[Lemma 3.19]{Goodrick_dpmin}. Suppose that definable functions on $X$ are not bounded away from $0$, as witnessed by a definable function $f \, : \, X \rightarrow \overline{R}$ and an interval $I$ such that:

(*) $I \cap X$ is infinite, and for any positive $\epsilon$ and every subinterval $J$ with $X \cap J$ infinite, there is some $a \in X \cap J$ such that $f(a) \leq \epsilon$.

By saturation, we may pick a sequence $\langle J_i \, : \, i \in \omega \rangle$ of pairwise disjoint subintervals of $I$ such that for each $i$ the set $J_i \cap X$ is infinite. The property (*) transfers to subintervals of $I$ so (*) is true, \emph{mutatis mutandis}, of each $J_i$ as well.

Pick $\epsilon_0 > 0$ in $R$ arbitrarily. By (*), we can pick an element $a_{i,0} \in J_i$ for each $i$ such that for each $i$,

\begin{equation}
f(a_{i,0}) < \epsilon_0.
\end{equation}

Now we pick positive elements $\epsilon_j$ and $a_{i,j} \in J_i$ for each $j \in \omega$ by induction, as follows: suppose we have already selected elements

$$\epsilon_0 > \epsilon_1 > \ldots > \epsilon_j$$
and elements $a_{i,j} \in J_i$ such that $f(a_{i,j}) < \epsilon_j$. By saturation, we can pick $\epsilon_{j+1} \in R$ such that for every $i \in \omega$,

\begin{equation}
0 < \epsilon_{j+1} < f(a_{i,j}). 
\end{equation}

Finally, applying the property (*) again, for each $i \in \omega$ we can pick an element $a_{i,j+1} \in J_i \cap X$ such that $f(a_{i, j+1}) < \epsilon_{j+1}$.

Now we have elements $\langle a_{i,j} \, : \, (i,j) \in \omega \times \omega \rangle$ such that for every $(i,j) \in \omega \times \omega$,

$$\epsilon_{j+1} < f(a_{i,j}) < \epsilon_j.$$

From this, we can construct an inp-pattern of depth $2$, as follows: in the first row, we use formulas $\varphi_0(x; \overline{b}_i)$ expressing the fact that $x \in J_i \cap X$, and $\{\varphi_0(x; \overline{b}_i) \, : \, i \in \omega \}$ is $2$-inconsistent since the $J_i$ are pairwise disjoint. In the second row, we use formulas $\varphi_1(x; \epsilon_j, \epsilon_{j+1})$ expressing the property that

$$\epsilon_{j+1} < f(x) < \epsilon_j$$
and again $\{\varphi_1(x; \epsilon_j, \epsilon_{j+1}) \, : \, j \in \omega \}$ is $2$-inconsistent. Furthermore, for any $(i,j) \in \omega \times \omega$, the formula $$\varphi_0(x; \overline{b}_i) \wedge \varphi_1(x; \epsilon_j, \epsilon_{j+1})$$ is consistent since it is satisfied by $a_{i,j}$. Thus we have an inp-pattern of depth $2$ in $X$, contradicting the assumption that $X$ has burden $1$.

\end{proof}

The next lemma generalizes a key fact  from \cite{Goodrick_dpmin} about definable functions in the dp-minimal case.

\begin{lem}
\label{bdd_away_from_zero_discrete}
If there is an infinite discrete set definable in $\mathcal{R}$, then functions on $R$ are bounded away from $0$.

\end{lem}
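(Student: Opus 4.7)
The plan is to reduce to Lemma~\ref{bdd_away_from_zero_bd_1} by translating any failure of boundedness away from $0$ into a burden-$1$ interval supplied by Lemma~\ref{rank_2_intervals}. Suppose toward contradiction that $f: R \to \overline{R}$ is a definable positive function which is \emph{not} bounded away from $0$, as witnessed by some interval $I \subseteq R$. By Lemma~\ref{rank_2_intervals}, the existence of the definable infinite discrete set yields some $\epsilon_0 > 0$ such that the interval $(0, \epsilon_0)$ has burden $1$.

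First I would shrink $I$ to a subinterval $J = (a, b)$ of length strictly less than $\epsilon_0$; this is possible since $\mathcal{R}$ is densely ordered, and the failure of boundedness away from $0$ passes to $J$ because any subinterval of $J$ is also a subinterval of $I$. Next I would pick $t \in R$ such that $J + t \subseteq (0, \epsilon_0)$ and define the definable positive function $g: R \to \overline{R}$ by $g(x) = f(x - t)$. Its restriction $g_0 = g|_{(0, \epsilon_0)}$ is a definable positive function from the burden-$1$ set $(0, \epsilon_0)$ to $\overline{R}$, so Lemma~\ref{bdd_away_from_zero_bd_1} guarantees that $g_0$ is bounded away from $0$ on $(0, \epsilon_0)$.

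The contradiction comes from showing that $g_0$ is \emph{not} bounded away from $0$ on the subinterval $J + t$ of $(0, \epsilon_0)$: for any subinterval $K \subseteq J + t$ and any $\epsilon > 0$, applying the failure property of $f$ on $J$ to the subinterval $K - t \subseteq J$ produces some $a^* \in K - t$ with $f(a^*) \leq \epsilon$, and then $a^* + t \in K$ satisfies $g_0(a^* + t) = f(a^*) \leq \epsilon$. The only things to check, which are really routine, are that ``not bounded away from $0$'' survives both restriction to subintervals and translation by a fixed group element; both are immediate from the universal form of Definition~\ref{bounded_away_zero} and the translation invariance of the definable category in an OAG. An alternative approach would directly construct an inp-pattern of depth $3$ by attaching translates of the depth-$2$ inp-pattern extracted in the proof of Lemma~\ref{bdd_away_from_zero_bd_1} to infinitely many pairwise isolated points of the discrete set, in the style of Lemma~\ref{rank_2_intervals}; but the reduction above avoids redoing that bookkeeping, and no step looks like a genuine obstacle.
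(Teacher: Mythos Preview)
Your argument is correct and follows essentially the same route as the paper: both proofs take a witnessing interval $I$ for the failure of boundedness, invoke Lemma~\ref{rank_2_intervals} to obtain a burden-$1$ interval $(0,\epsilon_0)$, and then use translation invariance to put a short subinterval of $I$ inside $(0,\epsilon_0)$, where Lemma~\ref{bdd_away_from_zero_bd_1} yields the contradiction. The paper phrases the last step contrapositively (every subinterval of $I$ must have burden $2$, contradicting the existence of a burden-$1$ translate inside $I$), whereas you translate the function rather than the interval, but this is only a cosmetic difference.
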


\begin{proof}
Suppose, to the contrary, that $f : R \rightarrow \overline{R}$ is definable, $f(x) > 0$ for every $x \in R$, and $f$ is not bounded away from $0$. Then there is a nonempty interval $I$ such that for any $\epsilon > 0$ and any subinterval $J$ of $I$, there is $a \in J$ such that $f(a) \leq \epsilon$. Therefore, $f$ is not bounded away from $0$ on any subinterval of $I$. By Lemma~\ref{bdd_away_from_zero_bd_1}, any subinterval of $I$ has burden $2$. By  Lemma~\ref{rank_2_intervals}, there is some interval $(0, \epsilon)$ of burden $1$, and shrinking $\epsilon$ as necessary, we may assume that $\epsilon$ is less than the diameter of $I$. Since burden is translation-invariant, we conclude that there is a subinterval of $I$ of burden $1$, a contradiction.

\end{proof}

We record an immediate consequence of Lemma \ref{bdd_away_from_zero_discrete}.

\begin{cor}\label{away_dense} Suppose there is an infinite discrete set definable in $\mathcal{R}$.  Suppose that $I$ is an interval and $X \subseteq I$ is definable, dense, and codense in $I$.  Then functions on $X$ are bounded away from $0$.
\end{cor}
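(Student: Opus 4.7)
The plan is to deduce the corollary from Lemma~\ref{bdd_away_from_zero_discrete} by means of a definable extension of $f$ to all of $R$. Given a positive definable $f \colon X \to \overline{R}$, I would fix some point $x_0 \in X$ (which exists since $X$ is infinite) and define $\tilde f \colon R \to \overline{R}$ by $\tilde f(x) = f(x)$ for $x \in X$ and $\tilde f(x) = f(x_0)$ for $x \notin X$. Then $\tilde f$ is definable (over the parameters defining $f$ together with $x_0$), takes values in the same sort of $\overline{R}$ as $f$, and is strictly positive on all of $R$. Hence Lemma~\ref{bdd_away_from_zero_discrete} applies and tells us that $\tilde f$ is bounded away from $0$ in the sense of Definition~\ref{bounded_away_zero}.

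To transfer the conclusion back to $f$, I would take an arbitrary interval $J$ with $J \cap X$ infinite. Since $X \subseteq I$, the intersection $J \cap I$ must contain a nondegenerate open subinterval (otherwise $J \cap X \subseteq J \cap I$ would be at most a singleton). Applying the bounded-away property of $\tilde f$ to $J \cap I$ produces some $\epsilon > 0$ and a subinterval $J' \subseteq J \cap I$ on which $\tilde f(a) > \epsilon$ for all $a \in J'$. Because $X$ is dense in $I$ and $J' \subseteq I$, the set $J' \cap X$ is automatically infinite, and on this intersection $f$ agrees with $\tilde f$, so $f(a) > \epsilon$ for every $a \in J' \cap X$. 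This is precisely what is required by Definition~\ref{bounded_away_zero}.

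The proof really is immediate once the extension has been set up, which is why the authors merely call it a corollary; the only subtlety worth flagging is the choice of the constant value $f(x_0)$ outside $X$, made specifically to keep $\tilde f$ landing in the same sort of $\overline{R}$ as $f$ so that Lemma~\ref{bdd_away_from_zero_discrete} applies without any sort juggling. Codensity of $X$ in $I$ plays no role in the argument and is part of the hypothesis only because this is the topological setting of principal interest in the present section.
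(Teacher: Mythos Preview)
Your argument is correct and follows the same approach as the paper: extend $f$ to all of $R$ by a positive constant outside $X$, invoke Lemma~\ref{bdd_away_from_zero_discrete}, and read off the conclusion. The paper's proof is terser (it picks an arbitrary $a>0$ rather than $f(x_0)$ and omits the explicit transfer step), but your choice of $f(x_0)$ to stay within the same sort of $\overline{R}$ and your observation that codensity is unused are both sound refinements.
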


\begin{proof}
If $g \, : \, X \rightarrow \overline{R}$ is definable and has positive values, pick any $a > 0$ and extend $g$ to a definable function $f \, : \, R \rightarrow \overline{R}$ by the rule that $f(x) = a$ when $x \notin X$. Now apply the previous Lemma to conclude that $f$, and hence $g$, is bounded away from $0$.
\end{proof}

\begin{definition}  For a definable set $X$ we write $a \sim_{X, \delta} b$ or simply $a \sim_\delta b$ for the equivalence relation on $R$ defined by $$ \forall \, \epsilon \in (-\delta, \delta) \left[ a + \epsilon \in X \Leftrightarrow b + \epsilon \in X \right],$$ and $a \sim_X b$ (or simply $a \sim b$) means that for some $\delta > 0$, we have $a \sim_\delta b$. 
\end{definition}

First we note the following easy but useful fact:

\begin{lem}
\label{translation_fact}
If $X$ is definable, $a, b \in R$, $a \sim_{X,\delta} b$, and $|\epsilon| < \delta$, then  $a + \epsilon \sim_{X,\delta - |\epsilon|} b + \epsilon$. In particular, $a + \epsilon \sim_X b + \epsilon$.
\end{lem}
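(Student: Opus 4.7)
The plan is to unwind the definition of $\sim_{X,\delta}$ directly, reducing everything to a triangle-inequality check on the perturbations. Given the hypothesis $a \sim_{X,\delta} b$ and some $\epsilon$ with $|\epsilon| < \delta$, I need to show that for every $\epsilon''$ with $|\epsilon''| < \delta - |\epsilon|$, we have $(a+\epsilon)+\epsilon'' \in X \iff (b+\epsilon)+\epsilon'' \in X$.

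First I would fix such an $\epsilon''$ and set $\epsilon' := \epsilon + \epsilon''$. The triangle inequality gives
\[
|\epsilon'| \leq |\epsilon| + |\epsilon''| < |\epsilon| + (\delta - |\epsilon|) = \delta,
\]
so $\epsilon' \in (-\delta, \delta)$. Applying the hypothesis $a \sim_{X,\delta} b$ to this particular $\epsilon'$ yields $a + \epsilon' \in X \iff b + \epsilon' \in X$, which by associativity of $+$ is exactly $(a+\epsilon)+\epsilon'' \in X \iff (b+\epsilon)+\epsilon'' \in X$. Since $\epsilon''$ was arbitrary in the required range, this proves $a + \epsilon \sim_{X,\delta - |\epsilon|} b + \epsilon$.

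The ``in particular'' clause is then immediate: since $|\epsilon| < \delta$, the quantity $\delta - |\epsilon|$ is a positive element of $R$, so there is some positive $\delta' \leq \delta - |\epsilon|$ witnessing $a+\epsilon \sim_X b+\epsilon$.

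There is no real obstacle here — the lemma is essentially a sanity check that the relation $\sim_X$ is compatible with translation, and the content is entirely in the translation invariance of $<$ on the perturbation parameter. No appeal to burden, to saturation, or to any other structural hypothesis on $\mathcal{R}$ is needed; only the group operation and the definition of $\sim_{X,\delta}$ are used.
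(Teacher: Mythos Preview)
Your proof is correct and is exactly the straightforward unwinding the paper has in mind; in fact the paper does not even include a proof of this lemma, introducing it only as an ``easy but useful fact,'' so your argument fills in precisely what was left to the reader.
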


\begin{lem}
\label{single_class_reduction}
Let $X$ be definable.  Fix any $a \in X$ and let $\widetilde{X} = \{b \in X \, : \, b \sim_X a\}$. Then $|\widetilde{X} / \sim_{\widetilde{X}} | = 1$.
\end{lem}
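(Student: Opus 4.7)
The plan is to show directly that any two elements $b_1, b_2 \in \widetilde{X}$ are $\sim_{\widetilde{X}}$-equivalent, by producing a common $\delta > 0$ that works uniformly and then using Lemma~\ref{translation_fact} to transport the equivalence to translates.

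First I would verify (in one line) that $\sim_X$ is itself an equivalence relation on $R$: symmetry is obvious, and for transitivity one just takes the minimum of two witnessing $\delta$'s. Given $b_1, b_2 \in \widetilde{X}$, both are $\sim_X$-equivalent to the fixed element $a$, so by transitivity $b_1 \sim_X b_2$. Fix $\delta > 0$ with $b_1 \sim_{X,\delta} b_2$.

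The main step is then to check that $b_1 \sim_{\widetilde{X},\delta} b_2$. Fix $\epsilon$ with $|\epsilon| < \delta$ and suppose $b_1 + \epsilon \in \widetilde{X}$, i.e., $b_1 + \epsilon \in X$ and $b_1 + \epsilon \sim_X a$. Since $b_1 \sim_{X,\delta} b_2$ and $|\epsilon| < \delta$, the defining property of $\sim_{X,\delta}$ gives $b_2 + \epsilon \in X$. Moreover, Lemma~\ref{translation_fact} yields $b_1 + \epsilon \sim_X b_2 + \epsilon$, and combining this with $b_1 + \epsilon \sim_X a$ and transitivity of $\sim_X$ gives $b_2 + \epsilon \sim_X a$. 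Hence $b_2 + \epsilon \in \widetilde{X}$. The reverse implication is symmetric, so $b_1 \sim_{\widetilde{X},\delta} b_2$, and therefore $b_1 \sim_{\widetilde{X}} b_2$. Since $a \in \widetilde{X}$ and $b_1, b_2$ were arbitrary, all elements of $\widetilde{X}$ lie in a single $\sim_{\widetilde{X}}$-class.

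There is no real obstacle here; the content of the lemma is essentially bookkeeping with the two layers of equivalence. The only point that requires a moment's care is ensuring that the same $\delta$ works for the $\sim_{\widetilde{X}}$-relation as for the $\sim_{X}$-relation, and this is precisely where Lemma~\ref{translation_fact} is used to guarantee that the ``$\sim_X a$'' condition in the definition of $\widetilde{X}$ is preserved under simultaneous translation of $b_1$ and $b_2$ by the same small $\epsilon$.
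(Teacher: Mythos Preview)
Your proof is correct and follows essentially the same approach as the paper's: fix a $\delta$ witnessing the relevant $\sim_{X,\delta}$ relation, then use Lemma~\ref{translation_fact} plus transitivity of $\sim_X$ to verify $\sim_{\widetilde{X},\delta}$. The only cosmetic difference is that the paper compares an arbitrary $b \in \widetilde{X}$ directly with the fixed element $a$ (which suffices since $a \in \widetilde{X}$), whereas you compare two arbitrary elements $b_1, b_2$ via $a$; the content is identical.
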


\begin{proof}
Let $b \in \widetilde{X}$, and we will show that $b \sim_{\widetilde{X}} a$. Choose $\delta > 0$ such that $b \sim_{X, \delta} a$ and suppose that $\epsilon \in (-\delta, \delta)$. By Lemma~\ref{translation_fact}, we have $ b + \epsilon \sim_X a + \epsilon$, so in particular $b + \epsilon \sim_X a$ if and only if $a + \epsilon \sim_X a$. In other words, $b + \epsilon \in \widetilde{X}$ if and only if $a + \epsilon \in \widetilde{X}$, so $b \sim_{\widetilde{X},\delta} a$.
\end{proof}

Now we look for type-definable subgroups related to definable subsets of $R$. The proof of the next lemma is similar to that of Theorem~3.6 from \cite{S}.

\begin{lem}
\label{single_class_group}
Suppose that:
\begin{enumerate}
\item $X \subseteq R$ is definable;
\item $I$ is an open interval such that $X \cap I$ is infinite; 
\item functions on $X \cap I$ are bounded away from $0$;
\item $X \cap I$ is not discrete; and
\item $|(X \cap I) / \sim| = 1$.
\end{enumerate}

Then for any $a \in X \cap I$, there is a nonzero type-definable convex subgroup $C$ of $(R, +)$ such that $(X - a) \cap C$ is also a subgroup of $(R, +)$.

\end{lem}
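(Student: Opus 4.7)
The plan is to adapt the strategy of Simon's Theorem~3.6 in \cite{S}, which establishes an analogous result in the dp-minimal setting. After translating by $-a$, we may assume $a = 0 \in X \cap I$, and it suffices to produce a nonzero type-definable convex subgroup $C$ of $(R,+)$ such that $X \cap C$ is a subgroup of $(R,+)$.

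The first step is to extract a uniform ``local periodicity modulus'' for $X$ near $0$. Consider the definable function $\delta : X \cap I \to \ob{R}_{>0}$ defined by $\delta(b) = \sup \{ d > 0 : 0 \sim_{X,d} b \}$, which is everywhere positive by hypothesis (5). Hypotheses (4) and (5) together force every point of $X \cap I$ to be an accumulation point of $X \cap I$ (an isolated point cannot be $\sim$-equivalent to an accumulation point), so in particular $0$ is. Applying hypothesis (3) to $\delta$, and iterating this application against shrinking intervals around $0$ if necessary, one obtains $\delta_0 > 0$ together with a subinterval $J \subseteq I$ such that $X \cap J$ is infinite, $\delta(b) > \delta_0$ for every $b \in X \cap J$, and some chosen $b_0 \in X \cap J$ satisfies $|b_0| < \delta_0/8$.

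The central step is to transfer this uniform modulus from $J$ to a neighborhood $V$ of $0$. Choose an open neighborhood $V \subseteq (-\delta_0/2, \delta_0/2)$ of $0$ small enough that $V + b_0 \subseteq J$. For each $t \in X \cap V$ one shows $\delta(t) > 7\delta_0/8$ as follows: the relation $0 \sim_{X, \delta_0} b_0$ applied at $\epsilon = t$ (permitted since $|t| < \delta_0/2 < \delta_0$) gives $t \in X \Leftrightarrow t + b_0 \in X$, so $t + b_0 \in X \cap J$, whence $0 \sim_{X, \delta_0} t + b_0$; combining these two statements by transitivity yields $b_0 \sim_{X, \delta_0} t + b_0$, and translating by $-b_0$ via Lemma~\ref{translation_fact} (allowed since $|b_0| < \delta_0$) gives $0 \sim_{X, \delta_0 - |b_0|} t$, hence $\delta(t) > 7\delta_0/8$.

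Now pick any $\rho > 0$ with $(-\rho, \rho) \subseteq V$ and define
\[ C = \{\, t \in R \,:\, n|t| < \rho \text{ for every } n \in \NN \,\}. \]
A short calculation confirms $C$ is a type-definable convex subgroup of $(R,+)$, and $|T|^+$-saturation of $\mathcal{R}$ together with the dense ordering of $R$ ensures $C \neq \{0\}$. Since $C \subseteq (-\rho, \rho) \subseteq V$, $\delta(t) > 7\delta_0/8$ for every $t \in X \cap C$. To verify $X \cap C$ is closed under subtraction, take $t, t' \in X \cap C$; then $|t'| < \rho \leq \delta_0/2 < \delta(t), \delta(t')$, so applying $0 \sim_{X, \delta(t')} t'$ at $\epsilon = -t'$ yields $-t' \in X$, and then applying $0 \sim_{X, \delta(t)} t$ at $\epsilon = -t'$ yields $t - t' \in X$; since $t - t' \in C$ as well, $X \cap C$ is a subgroup of $(R, +)$. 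I expect the main obstacle to be the iteration argument in the second paragraph: simultaneously arranging a uniform bound $\delta_0$ together with the smallness condition $|b_0| < \delta_0/8$ requires a careful interplay of hypotheses (3) and (5), and is the technical heart of adapting Simon's dp-minimal argument to the burden-$2$ single-$\sim$-class setting.
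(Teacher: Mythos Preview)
Your overall strategy---defining $\delta(b)=\sup\{d:0\sim_{X,d}b\}$, extracting a uniform lower bound on some $J$ via hypothesis~(3), transferring that bound to a neighborhood of $0$, then taking $C$ to be the intersection of the intervals $(-\rho/n,\rho/n)$---is exactly the paper's approach, and your verification that $X\cap C$ is closed under subtraction is correct.

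The one genuine gap is precisely the step you flag: obtaining $b_0\in X\cap J$ with $|b_0|<\delta_0/8$. The ``iteration against shrinking intervals'' you describe does not terminate in general. Each time you shrink the interval around $0$ and reapply hypothesis~(3), the new bound $\delta_0$ may shrink faster than the interval does, so there is no reason the ratio $|b_0|/\delta_0$ ever drops below $1/8$. Your iteration does not invoke hypothesis~(5), and without it there is no mechanism forcing the two scales to align.

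The paper's fix is simple and avoids any smallness condition on $b_0$. Pick \emph{any} $b_0\in X\cap J$; by hypothesis~(5) there is some $\epsilon'>0$ with $0\sim_{X,\epsilon'}b_0$, and one may shrink $\epsilon'$ so that $\epsilon'\le\delta_0$ and $(b_0-\epsilon',b_0+\epsilon')\subseteq J$. Now for $|t|<\epsilon'/2$ with $t\in X$, translate $0\sim_{X,\epsilon'}b_0$ by $t$ (not by $-b_0$!) to get $t\sim_{X,\epsilon'/2}b_0+t$; since $t\in X$ forces $b_0+t\in X\cap J$, we get $\delta(b_0+t)>\delta_0\ge\epsilon'$, hence $0\sim_{X,\epsilon'/2}b_0+t$, and transitivity gives $0\sim_{X,\epsilon'/2}t$. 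Thus $\delta(t)\ge\epsilon'/2$ uniformly on $X\cap(-\epsilon'/2,\epsilon'/2)$, with no constraint whatsoever on $|b_0|$. The rest of your argument then goes through verbatim with $\epsilon'/2$ playing the role of your $7\delta_0/8$.
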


\begin{proof}
Fix $a \in X \cap I$ and define $f: X \rightarrow \overline{R}$ to be the function $$f(x) = \sup \{ \delta \in R \, : \, x \sim_\delta a \}.$$ 
Note that $f(x)>0$ on $I$.  Since functions on $X \cap I$ are bounded away from zero, we may pick an $\epsilon > 0$ and an open subinterval  $J \subseteq I$ such that $X \cap J$ is infinite and $f(x) > \epsilon$ for every $x \in J \cap X$.

\begin{claim}
There is a positive element $\epsilon_0 < \epsilon$ such that $f(x) > \epsilon_0$ for every $x \in (a - \epsilon_0, a+ \epsilon_0)$.
\end{claim}

\begin{proof}
 First, we may assume that $f(a) > \epsilon$ (by replacing $\epsilon$ by a positive element less than $f(a)$ if necessary). Pick some $b \in X \cap J$ and some $\epsilon' > 0$ such that $a \sim_{\epsilon'} b$, $(b - \epsilon', b + \epsilon') \subseteq J$, and $\epsilon' \leq \epsilon$. Now pick a positive element $\epsilon''$ such that $2 \epsilon'' < \epsilon'$ and let $z$ be an arbitrary element of $(-\epsilon'', \epsilon'')$. Then we have
 
 \begin{equation}
 a + z \sim_{\epsilon''} b + z
 \end{equation}
 since $a \sim_{\epsilon'} b$ and using Lemma~\ref{translation_fact}. On the other hand,  since $b + z \in J$, we also have that $f(b+z) > \epsilon > \epsilon''$ (by our initial choice of the interval $J$), and so
  
  \begin{equation}
 b + z \sim_{\epsilon'} a.
 \end{equation}

By (6), (7), and transitivity, we conclude that $a+z \sim_{\epsilon''} a$. Therefore, given that $f(a) > \epsilon > \epsilon''$, it follows that for any $a + z \in (a - \epsilon'', a + \epsilon'')$, we have $f(a+z) \geq \epsilon''$. Then if $0 < \epsilon_0 < \epsilon''$ and $a + z \in (a - \epsilon_0, a + \epsilon_0)$, we have $f(a + z) > \epsilon_0$, as desired.
 \end{proof}

Pick $\epsilon_0$ as in the Claim above and small enough so that $(a - \epsilon_0, a + \epsilon_0) \subseteq I$. Replace $I$ by the subinterval $(a - \epsilon_0, a + \epsilon_0)$ and note that all the hypotheses of the Lemma still hold (since (4) and (5) imply that $X \cap I$ has no isolated points, hence $X \cap (a - \epsilon_0, a + \epsilon_0)$ is infinite, and the rest are trivial).

Define
 $$C = \bigcap_{n \in \N}\{x \in R : -\epsilon_0 < nx < \epsilon_0\}$$
 and note that $C$ is closed under addition by the triangle quality. Thus $C$ is a convex type-definable subgroup of $(R, +)$. Let $H = (X - a) \cap C$ and suppose that $g, h \in H$. Then $g + h \in C$. Furthermore, since $f(a+g) > \epsilon_0$ (by the conclusion of the Claim above), we have $a + g \sim_{\epsilon_0} a$. Hence by Lemma~\ref{translation_fact} and the fact that $|h| < \epsilon_0$, we obtain $$a + g + h \sim a + h .$$ But $a+h \in X$ since $h \in X - a$, so $g + h \in X - a$. This shows that $H$ is closed under addition. For closure under negation, if $g \in (X-a) \cap C$, then it is immediate that $-g \in C$, while $a + g \sim_{\epsilon_0} a$ and $|g| < \epsilon_0$ imply (using Lemma~\ref{translation_fact} again) that $$ a + g -g   \sim a - g$$ $$\Rightarrow a \sim a-g,$$ and therefore $a-g \in X$, hence $-g \in X - a$.

\end{proof}

Finally we come to our generalization of Theorem~\ref{sec2main} in the case when $\mathcal{R}$ has burden $2$ instead of dp-rank $2$.  The example in the subsequent section demonstrates the necessity of  the extra hypothesis that $X / \sim$ is finite.

\begin{thm}
\label{dense_finite_twiddle}
If $\mathcal{R}$ is a divisible OAG of burden $\leq 2$ in which an infinite discrete set is definable, then there cannot be a definable set $X \subseteq R$ which is dense and codense in some nonempty interval and such that $X / \sim $ is finite.
\end{thm}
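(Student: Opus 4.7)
The plan is to argue by contradiction, building an inp-pattern of depth $3$ that violates the burden bound. The key idea is to promote the type-definable subgroup $H$ produced by Lemma~\ref{single_class_group} into a \emph{definable} subgroup of $R$, namely the stabilizer of a single $\sim$-class of $X$, and then combine its coset structure with the infinite discrete set $D$.

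First I would use saturation together with the finiteness of $X/\sim$ to produce a uniform $\delta_0 > 0$ such that each $\sim$-class is definable as $\{b \in X : b \sim_{\delta_0} a_i\}$ for fixed representatives; if no such $\delta_0$ existed, compactness would yield $b \in X$ outside every class. Since $X$ is dense in $I$ and a finite union of these definable classes, and a finite union of nowhere dense sets is nowhere dense, at least one class $\widetilde X$ is dense in a subinterval $I' \subseteq I$; as $\widetilde X \subseteq X$ it is also codense in $I'$, and Lemma~\ref{single_class_reduction} gives $|\widetilde X / \sim_{\widetilde X}| = 1$. Applying Lemma~\ref{single_class_group} to $\widetilde X$ on $I'$ (using Corollary~\ref{away_dense} for the bounded-away-from-zero hypothesis) gives a type-definable convex subgroup $C$ and subgroup $H = (\widetilde X - a) \cap C$ for a chosen $a \in \widetilde X \cap I'$; I arrange the parameter $\epsilon_0$ in the construction to satisfy $\epsilon_0 \leq \delta_0$, so $C \subseteq (-\delta_0, \delta_0)$. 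It then follows that $\widetilde X$ is $H$-invariant: for $b \in \widetilde X$ and $h \in H$, since $|h| < \delta_0$ and $b \sim_{\delta_0} a$ together with $a + h \in \widetilde X$ yield $b + h \in \widetilde X$, we obtain $\widetilde X + h = \widetilde X$ for every $h \in H$. A standard saturation argument using density of $\widetilde X$ near $a$ shows $H$ contains arbitrarily small positive elements.

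Hence the stabilizer $S := \textup{Stab}(\widetilde X) = \{g \in R : \widetilde X + g = \widetilde X\}$ is a definable proper nontrivial subgroup of $R$ with arbitrarily small positives: it contains $H$, and equals $R$ only if $\widetilde X$ is empty or all of $R$. By divisibility of $R$, $[R:S] = \infty$; by properness, $S$ has empty interior and is therefore codense. In particular, for any $\eta > 0$ the interval $(0,\eta)$ meets infinitely many $S$-cosets (else it would lie in a finite union of codense sets). To finish, given any $\epsilon > 0$, pick $s_0 \in S$ with $0 < n s_0 < \epsilon/2$ for every $n \in \N$ and pick $c_1, c_2, \ldots \in (0, s_0/200)$ in pairwise distinct $S$-cosets. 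Define
\[
\varphi_1(x; c_i) := (x - c_i \in S), \qquad \varphi_2(x; j) := (j s_0 < x < j s_0 + s_0/100).
\]
Row 1 is $2$-inconsistent since distinct $S$-cosets are disjoint; row 2 is $2$-inconsistent by disjointness of the intervals; and the path $(c_i, j)$ is witnessed by $x = c_i + j s_0 \in (0, \epsilon)$. Attaching this depth-$2$ pattern to each point of $D$ by the translation trick in the proof of Lemma~\ref{rank_2_intervals} yields an inp-pattern of depth $3$, contradicting burden~$\leq 2$. The main obstacle is establishing the $H$-invariance of $\widetilde X$, which requires both the uniform $\sim$-parameter $\delta_0$ (from finiteness of $X/\sim$) and the flexibility to shrink $\epsilon_0$ in Lemma~\ref{single_class_group} below $\delta_0$.
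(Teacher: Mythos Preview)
Your strategy---passing from the type-definable subgroup $H$ of Lemma~\ref{single_class_group} to the \emph{definable} stabilizer $S=\textup{Stab}(\widetilde X)$ and then building a depth-$2$ pattern from $S$-cosets and short intervals---is a genuine alternative to the paper's argument, and the steps establishing a uniform $\delta_0$, the definability of each $\sim$-class, and the $H$-invariance of $\widetilde X$ are all correct. Two of your later justifications, however, do not stand as written.

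First, ``by properness, $S$ has empty interior'' is false in general: in a saturated (hence non-Archimedean) ordered group a proper subgroup can contain an open interval about $0$. The conclusion is still true, but the reason is that if $(-\eta,\eta)\subseteq S$ then $\widetilde X+g=\widetilde X$ for every $|g|<\eta$, which forces $\widetilde X$ to be open and contradicts its codensity in $I'$. Second, your argument that $(0,\eta)$ meets infinitely many $S$-cosets (``else it would lie in a finite union of codense sets'') is invalid: a finite union of sets with empty interior can of course have interior, as $\Q\cup(\R\setminus\Q)$ shows. The claim is true, but it needs exactly the divisibility trick the paper uses for $H$: choose $g\in(0,\eta)\setminus S$ (possible since $S$ has empty interior) and check that the elements $g/n$, or $g/k^i$ when $\bar g$ has finite order $k$ in $R/S$, lie in pairwise distinct $S$-cosets and all sit inside $(0,\eta)$.

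With those two repairs your proof goes through. By comparison, the paper never leaves the type-definable setting: it applies the $g/n$ trick directly to $H$ inside $C$, takes the second inp-row to be, in effect, ``$x$ lies in the coset $c_j+H$'' (expressed via the definable set $Z$), and reaches the contradiction with Lemma~\ref{rank_2_intervals} without the uniform $\delta_0$, the $H$-invariance of $\widetilde X$, or the passage to a definable stabilizer. Your route is longer but has the conceptual payoff of producing a \emph{definable} proper subgroup of $R$ with dense image in small intervals.
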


\begin{proof}

Suppose towards a contradiction there is such a set $X$.  Since  $X / \sim $ is finite, there is some $a \in X$ and some nonempty interval $I$ in which $[a]_\sim$ is dense and codense. Let $Z = [a]_\sim \cap I$, and by Lemma~\ref{single_class_reduction}, we have that $| Z / \sim_Z | = 1$.

Since an infinite discrete set is definable in $\mathcal{R}$, by Corollary~\ref{away_dense}, functions on $Z$ are bounded away from zero. Fix any $a \in Z \cap I$, and we may apply Lemma~\ref{single_class_group} to obtain a nonzero convex subgroup $C$ of $\langle R; + \rangle$ such that $H := C \cap (Z -a)$ is a subgroup. By our assumptions on $Z$, $H$ is dense and codense in $C$. Pick any $g \in C \setminus H$.

\begin{claim}
The set $\{\frac{g}{n} \, : \, n \in \N \setminus \{0\} \}$ contains representatives of infinitely many cosets of $H$.
\end{claim}

\begin{proof}
On the one hand, if the image of $g$ in $R / H$ has infinite order, then for any distinct $n, m \in \N \setminus \{0\}$, we have $mg - ng \notin H$, and hence $\frac{g}{n} - \frac{g}{m} = \frac{mg - ng}{nm} \notin H$, so we are done. Otherwise, let $k$ be the least positive natural number such that $k g \in H$, and note that if $\textup{GCD}(k,m) =1$, then $mg \notin H$. Thus whenever $1 \leq i < j$, $$\frac{g}{k^j} - \frac{g}{k^i} = \frac{(1 - k^{j-i} ) g}{k^j},$$ but $\textup{GCD}(1 - k^{j-i} , k) = 1$, so this difference cannot be in $H$.
\end{proof}

By the Claim, the convex set $C$ intersects infinitely many cosets of the group $H$. Now we can easily build a depth-2 inp-pattern within any interval $J \subseteq C$, as follows: for the first row, let

\begin{equation}
\varphi_0(x; a_i, b_i) := a_{0, i} < x < b_{0,i}
\end{equation}
(for $i < \omega$) be any family of formulas which define pairwise disjoint open subintervals of $J$; and for the second row, let

\begin{equation}
\varphi_1(x; c_j) := x - c_j \in X
\end{equation}
with the parameters $\{c_j \, : \, j \in \omega\}$ chosen from $C$ which represent distinct cosets of $H$ (which is possible by the Claim). Since $H$ is a dense and codense subgroup of $C$, every coset $c_j + H$ is also dense and codense in $C$, and hence each pair $\varphi_0(x; a_i, b_i) \wedge \varphi_1(x; c_j)$ is consistent. Thus we have an inp-pattern of depth $2$ in any subinterval $J$ of $C$, contradicting Lemma~\ref{rank_2_intervals} and finishing the proof.
\end{proof}

Thus we have established that for divisible $\mathcal{R}$ we can not simultaneously have an infinite definable discrete set and a definable dense codense set $X$ with $X/\sim$ finite.
Next we consider the case in which $X \subseteq R$ is definable and $X / \sim$ is infinite. First we need a preliminary lemma:

\begin{lem}
\label{dividing_acc_point}
If there is an infinite definable discrete set in $\mathcal{R}$, then there is no definable $X \subseteq R$ such that $X$ divides over some elementary submodel $\mathcal{R}_0$ of $\mathcal{R}$ and $0$ is an accumulation point of $X$.

\end{lem}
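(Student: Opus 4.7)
The plan is to derive a contradiction with Lemma~\ref{rank_2_intervals} by constructing an inp-pattern of depth $2$ consistent with an interval of burden $1$.

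First I would set up. By Lemma~\ref{rank_2_intervals} applied inside $\mathcal{R}_0$ (which contains an infinite definable discrete set by elementary equivalence with $\mathcal{R}$), fix $\epsilon \in \mathcal{R}_0^{>0}$ such that $(0,\epsilon)$ has burden $1$. Suppose for contradiction that $X = \varphi(R, \bar b)$ is definable, divides over $\mathcal{R}_0$, and has $0$ as an accumulation point; without loss of generality $0$ is a right accumulation point (else consider $-X$). Choose an $\mathcal{R}_0$-indiscernible sequence $(\bar b_n)_{n<\omega}$ with $\bar b_0 = \bar b$ witnessing the dividing, so that $\{X_n := \varphi(R,\bar b_n) : n<\omega\}$ is $k$-inconsistent for some $k$.

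Second I would establish that each $X_n$ contains open subintervals arbitrarily close to $0$. For every $\delta \in \mathcal{R}_0^{>0}$ the formula $\exists x\,(0 < x < \delta \wedge \varphi(x,\bar y))$ is an $\mathcal{R}_0$-formula in $\bar y$, so indiscernibility propagates it from $\bar b_0$ to each $\bar b_n$. Using divisibility of $\mathcal{R}_0$ to obtain positive elements of $\mathcal{R}_0$ below any given $\delta$, it follows that $X_n \cap (0,\delta)$ is infinite for every $\delta \in \mathcal{R}_0^{>0}$. Since $(0,\delta) \subseteq (0,\epsilon)$ has burden~$1$, Lemma~\ref{rank_1_interval} forces $X_n \cap (0,\delta)$ to have nonempty interior, so each $X_n$ contains open subintervals inside every such $(0,\delta)$.

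Third, I would extract indiscernible witnesses and case-analyze. Pick an open subinterval $(u_0,v_0) \subseteq X_0 \cap (0,\epsilon)$ and, via the usual compactness and indiscernible-extension argument, replace the sequence by an $\mathcal{R}_0$-indiscernible sequence $(\bar b_n, u_n, v_n)_{n<\omega}$ with $(u_n,v_n) \subseteq X_n \cap (0,\epsilon)$ a nonempty open subinterval for each $n$. Since $(u_n)$ and $(v_n)$ are each either constant or strictly monotone, a short case analysis shows that in every configuration except one, the intersection $\bigcap_{n<k}(u_n,v_n)$ is a nonempty open set, hence is contained in $\bigcap_{n<k} X_n$, contradicting $k$-inconsistency. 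The surviving configuration is that $(u_n),(v_n)$ are strictly monotone in the same direction and, after passing to the subsequence indexed by multiples of $k-1$, the intervals become pairwise disjoint.

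Finally, in the surviving configuration I would build the inp-pattern. Using the freedom from step two to rechoose $(u_0,v_0)$ inside $X_0 \cap (0,\delta_0)$ for arbitrarily small $\delta_0 \in \mathcal{R}_0^{>0}$, we push the whole indiscernible family of intervals arbitrarily close to $0$; combining this with the arbitrary-smallness of subintervals of each $X_n$ near $0$ and a standard Ramsey/compactness extraction, we obtain pairwise disjoint open intervals $J_0, J_1, \ldots \subseteq (0,\epsilon)$ such that $J_j \cap X_n \neq \emptyset$ for every pair $(n,j)$. Taking row~$1$ to be $\{\varphi(x,\bar b_n) : n<\omega\}$ (which is $k$-inconsistent) and row~$2$ to be $\{x \in J_j : j<\omega\}$ (which is $2$-inconsistent by disjointness) yields an inp-pattern of depth $2$ consistent with $(0,\epsilon)$, contradicting burden~$1$. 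The main obstacle is the last step: arranging the intervals $J_j$ to meet every $X_n$ simultaneously. This is where the combination of arbitrary-smallness near $0$ (to place the $J_j$ in a region where each $X_n$ is rich) and the uniformity provided by indiscernibility of the extracted sequences is essential.
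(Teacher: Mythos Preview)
Your overall target is the same as the paper's: build an inp-pattern of depth $2$ inside every interval around $0$, with the conjugates $\{X_n\}$ as one row and pairwise disjoint intervals as the other, and then invoke Lemma~\ref{rank_2_intervals}. But the execution takes an unnecessary detour and has a real gap.

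The gap is in your second step. Lemma~\ref{rank_1_interval} says that a nowhere dense definable subset of a burden-$1$ interval is finite; the contrapositive yields only that $X_n \cap (0,\delta)$ is \emph{somewhere dense}, i.e.\ its closure contains an open interval. It does not give that $X_n$ itself has nonempty interior. (Simon's interior result requires dp-rank $1$ and divisibility, neither of which is among the standing hypotheses here.) Without genuine open subintervals $(u_n,v_n)\subseteq X_n$, your third step collapses: density of several $X_n$'s in a common subinterval does not contradict $k$-inconsistency. And your fourth step, which you yourself flag as the obstacle, is not actually carried out; ``a standard Ramsey/compactness extraction'' does not explain how the $J_j$ are arranged to meet \emph{every} $X_n$.

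The paper's argument shows that the interior claim and the case analysis on monotone endpoints are entirely unnecessary. You already have what is needed: ``$0$ is an accumulation point of $\varphi(R,\bar y)$'' is a first-order property of $\bar y$ over the parameter $0\in R_0$, so it passes from $\bar b_0$ to every $\bar b_n$ by $R_0$-indiscernibility. Now build the second row directly by saturation: given $b_j>0$, the partial type in $c$ asserting $0<c$ together with $\exists x\,(c<x<b_j \wedge \varphi(x,\bar b_n))$ for each $n<\omega$ is finitely satisfiable (each $X_n$ contributes a point in $(0,b_j)$; take $c$ below the finitely many witnesses), hence realized. Iterating with $b_{j+1}<c_j$ gives pairwise disjoint intervals $J_j=(c_j,b_j)$ each meeting every $X_n$, and you are done. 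As a minor aside, your appeal to ``divisibility of $\mathcal{R}_0$'' is both unwarranted (the standing hypothesis is only dense order) and unneeded: density already supplies arbitrarily small positive elements.
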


\begin{proof}
Otherwise, say there is such a set $X = X_{\overline{a}}$ defined over parameters $\overline{a}$. Pick any interval $I$ around $0$, and we will construct an inp-pattern of depth $2$ in $I$ as follows: first, since $X_{\overline{a}}$ divides, pick a set of $\mathcal{R}_0$-conjugates $\{X_{\overline{a}_i} \, : \, i \in \omega\}$ of $X_{\overline{a}}$ which are $k$-inconsistent for some $k$, and this will be the first row of the inp-pattern. For the second row, we recursively construct a sequence of points

$$b_0 > c_0 > b_1 > c_1 > \ldots > 0$$
as follows: first, let $b_0$ be any positive element of $I$. Since $0$ is an accumulation point of $X$, it is also an accumulation point of each of its conjugates $X_{\overline{a}_i}$, so by saturation we can pick $c_0 > 0$ so that the interval $J_0 := (c_0, b_0)$ intersects each of the sets $X_{\overline{a}_i}$. In general, once we have constructed $b_i$ and $c_i$ for all $i \leq n$, we pick an arbitrary $b_{n+1} \in (0, c_n)$, and then as before the fact that $0$ is an accumulation point of each conjugate of $X_{\overline{a}}$ allows us to pick $c_{n+1} \in (0, b_{n+1})$ so that $J_{n+1} := (c_{n+1}, b_{n+1})$ intersects every $X_{\overline{a}_i}$.

Now the second row in our inp-pattern will consist of the formulas $\varphi_1(x; b_j, c_j)$ asserting that $b_j < x < c_j$, which define pairwise-disjoint intervals which each intersect every set $X_{\overline{a}_i}$, as we wanted.

Since this inp-pattern can be constructed within any interval $I$ around $0$, by Lemma~\ref{rank_2_intervals} we have a contradiction.
\end{proof}

The next proposition is similar to the well-known fact that in an NIP theory, global types which do not divide over a small submodel are invariant over said submodel (see Chapter 5 of \cite{Guide_NIP}, or Proposition 2.1 of \cite{HruPil}). However, we require a version of this which assumes only that the formula we are working with is NIP.
  
\begin{prop}
\label{dividing_invariant}
Suppose $\varphi(\overline{x}; \overline{y})$ is an NIP formula in a theory $T$, $\mathcal{N} \prec \mathcal{M}$ are models of $T$ such that $\mathcal{M}$ is $|N|^+$- saturated, and $p(\overline{x})$ is a complete $\varphi(\overline{x}; \overline{y})$-$M$-type\footnote{That is, a maximal consistent collection of boolean combinations of instances $\varphi(\overline{x}; \overline{b})$ of $\varphi(\overline{x}; \overline{y})$ with $\overline{b}$ from $M$.} which does not divide over $N$. Then $p(\overline{x})$ is $N$-invariant.
\end{prop}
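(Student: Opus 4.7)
My plan is to adapt the classical proof in the NIP-theory setting (specifically \cite[Proposition 2.1]{HruPil} and the arguments in \cite{Guide_NIP}, Chapter~5) so that only the NIP of the formula $\varphi$ itself is invoked, rather than NIP of the full theory $T$. Suppose for contradiction that $p$ is not $N$-invariant. Then there exist tuples $\overline{b}, \overline{b}' \in M$ with $\overline{b} \equiv_N \overline{b}'$, $\varphi(\overline{x}; \overline{b}) \in p$, and $\neg\varphi(\overline{x}; \overline{b}') \in p$. Since $p$ is closed under Boolean combinations of $\varphi$-instances (by the footnote accompanying the statement), the conjunction
\[ \chi(\overline{x}; \overline{b}, \overline{b}') := \varphi(\overline{x}; \overline{b}) \wedge \neg\varphi(\overline{x}; \overline{b}') \]
belongs to $p$.

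Next, using $|N|^+$-saturation of $\mathcal{M}$ together with Ramsey's theorem and compactness, I would construct an $N$-indiscernible sequence of pairs $((\overline{b}_i, \overline{b}'_i))_{i<\omega}$ from $M$ extending $(\overline{b}_0, \overline{b}'_0) = (\overline{b}, \overline{b}')$; pair-indiscernibility immediately forces $\overline{b}_i \equiv_N \overline{b}'_i$ for each $i$. Since $p$ does not divide over $N$ and $\chi \in p$, the formula $\chi$ itself does not divide over $N$, so $\{\chi(\overline{x}; \overline{b}_i, \overline{b}'_i) : i < \omega\}$ is consistent in a suitable elementary extension. Fixing a realization $\overline{a}$, we have that $\varphi(\overline{a}; \overline{b}_i)$ holds and $\varphi(\overline{a}; \overline{b}'_i)$ fails for every $i$.

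The concluding step is to exhibit, using $\overline{a}$, an $N$-indiscernible sequence on which $\varphi(\overline{a}; \cdot)$ has unbounded alternation, contradicting NIP of $\varphi$ via Shelah's characterization that NIP formulas have finite alternation along indiscernible sequences. The obvious candidate is the interleaved sequence $\overline{b}_0, \overline{b}'_0, \overline{b}_1, \overline{b}'_1, \ldots$, whose truth values with respect to $\overline{a}$ alternate by construction but which is not automatically $N$-indiscernible. This last point is the main obstacle, and overcoming it is the technical heart of the argument: I would strengthen the initial extraction by a further Erd\H{o}s-Rado/Ramsey step to obtain a mutually indiscernible array, and use the hypothesis $\tp(\overline{b}/N) = \tp(\overline{b}'/N)$ together with compactness to symmetrize the two-type $\tp(\overline{b}\overline{b}'/N)$ enough that the resulting interleaved sequence becomes a single $N$-indiscernible sequence without destroying the alternation of $\varphi(\overline{a}; \cdot)$. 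Once such an $N$-indiscernible alternating sequence is in hand, the NIP of $\varphi$ alone supplies the final contradiction; the delicate point is precisely to ensure that no further NIP assumption on the full theory $T$ sneaks in during this extraction.
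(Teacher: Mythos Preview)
Your setup is fine up to the point where you have an $N$-indiscernible sequence of pairs $((\overline{b}_i,\overline{b}'_i))_{i<\omega}$ and a realization $\overline{a}$ of $\{\chi(\overline{x};\overline{b}_i,\overline{b}'_i):i<\omega\}$. The difficulty, as you recognize, is that the interleaved sequence $\overline{b}_0,\overline{b}'_0,\overline{b}_1,\overline{b}'_1,\ldots$ need not be $N$-indiscernible, and your proposed fix (``symmetrize the two-type $\tp(\overline{b}\,\overline{b}'/N)$'') is not a real argument. Knowing $\overline{b}\equiv_N\overline{b}'$ says nothing about $\tp(\overline{b}\,\overline{b}'/N)$ versus $\tp(\overline{b}'\,\overline{b}/N)$; there is no Ramsey or Erd\H{o}s--Rado step that will force the interleave to be indiscernible while preserving the specific alternation pattern you need, because any re-extraction may replace the pair $(\overline{b},\overline{b}')$ by a pair for which $\chi$ is no longer in $p$. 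This is exactly the place where the classical NIP-theory proof secretly uses more, and you have not supplied the missing idea.

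The paper's proof avoids interleaving altogether. Using that $N$ is a model (so coheir extensions exist and are $N$-invariant), it finds a single tuple $\overline{c}\in M$ such that \emph{both} $(\overline{b},\overline{c})$ and $(\overline{b}',\overline{c})$ begin $N$-indiscernible sequences. Since $p$ is a complete $\varphi$-type, it decides $\varphi(\overline{x};\overline{c})$; whichever way it decides, one of the two sequences, call it $(\overline{c}_i)_{i<\omega}$, has $\neg\bigl[\varphi(\overline{x};\overline{c}_0)\leftrightarrow\varphi(\overline{x};\overline{c}_1)\bigr]\in p$. Now non-dividing applied to this single $N$-indiscernible sequence gives a realization that alternates on every consecutive pair $(\overline{c}_{2i},\overline{c}_{2i+1})$, and since the \emph{whole} sequence $(\overline{c}_i)$ is already indiscernible, this immediately contradicts NIP of $\varphi$. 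The common-successor trick is the point you are missing; without it the interleave problem is genuine.
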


\begin{proof}
The same proof as in \cite{HruPil} goes through. Namely, suppose that $\overline{a}_0, \overline{a}_1 \in M$ and $\tp(\overline{a}_0 / N) = \tp(\overline{a}_1 / N)$, and assume towards a contradiction that $\varphi(\overline{x}; \overline{a}_0) \wedge \neg \varphi(\overline{x}; \overline{a}_1) \in p(\overline{x})$. As $N$ is a model, there is some $\overline{b}$ such that both $\overline{a}_0, \overline{b}$ and $\overline{a}_1, \overline{b}$ can be extended to infinite $N$-indiscernible sequences (see Facts~1.11 and 1.12~(ii) of \cite{CLPZ}), and by saturation we may assume $\overline{b} \in M$. If $\varphi(\overline{x}; \overline{b}) \in p$, then for the $N$-indiscernible sequence $\{\overline{a}'_i  :  i \in \omega\}$ extending $\overline{a}_1, \overline{b}$, we have $\neg \varphi(\overline{x}; \overline{a}'_0) \wedge \varphi(\overline{x}; \overline{a}'_1) \in p(\overline{x})$; and if to the contrary $\neg \varphi(\overline{x}; \overline{b}) \in p$, then let $\{\overline{a}'_i :  i \in \omega \}$ be the $N$-indiscernible sequence extending $\overline{a}_0, \overline{b}$, and we have $ \varphi(\overline{x}; \overline{a}'_0) \wedge \neg \varphi(\overline{x}; \overline{a}'_1) \in p(\overline{x})$. In either case,

\begin{equation}
\neg \left[\varphi(\overline{x}; \overline{a}'_0) \leftrightarrow \varphi(\overline{x}; \overline{a}'_1) \right] \in p(\overline{x}).
\end{equation}

Since $p(\overline{x})$ does not divide over $N$, the partial type $$\{\neg \left[ \varphi(\overline{x}; \overline{a}'_{2i}) \leftrightarrow \neg \varphi(\overline{x}; \overline{a}'_{2i+1}) \right] \, : \, i < \omega \}$$ must be consistent. As any Boolean combination of NIP formulas is NIP, $\neg \left[ \varphi(\overline{x}; \overline{y}_1) \leftrightarrow \neg \varphi(\overline{x}; \overline{y}_2)\right]$ is NIP, but we have just shown that this formula has infinite alternation rank, which is a contradiction.
\end{proof}

\begin{prop}
\label{no_inf_twiddle}
Suppose that there is an infinite discrete set definable in $\mathcal{R}$, $X \subseteq R$ is definable, and the formula $\varphi(x; y)$ expressing that $x \in X - y$ is NIP.  Then $X / \sim$ is finite.
\end{prop}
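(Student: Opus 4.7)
The plan is to assume for contradiction that $X/\sim$ is infinite and derive a contradiction with Lemma~\ref{dividing_acc_point} by producing a definable set $D$ with $0$ as an accumulation point that divides over a small elementary submodel. The set $D$ will be a symmetric difference $(X - a_0) \triangle (X - a_1)$ for two elements $a_0, a_1$ chosen from an appropriate indiscernible sequence of representatives of distinct $\sim$-classes, and the NIP hypothesis on $\varphi$ will force dividing via the standard uniform alternation bound.

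In more detail, fix a small elementary submodel $\mathcal{N} \prec \mathcal{R}$. Using that $X/\sim$ is infinite together with the fact that "$a \not\sim b$" is first-order expressible (as $\exists \delta > 0 \, \forall \epsilon$ with $|\epsilon| < \delta$ we have $a+\epsilon \in X \leftrightarrow b + \epsilon \in X$, then negated), a standard Ramsey/compactness extraction produces an $\mathcal{N}$-indiscernible sequence $(a_i)_{i<\omega}$ in $R$ whose elements lie in pairwise distinct $\sim_X$-classes. For each $i<j$, set $D_{ij} := (X - a_i) \triangle (X - a_j)$; since $a_i \not\sim a_j$, the point $0$ is an accumulation point of every $D_{ij}$. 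Let $D := D_{01}$, which is definable over $\{a_0,a_1\}$.

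To see that $D$ divides over $\mathcal{N}$, observe that the pairs $(a_{2i}, a_{2i+1})_{i<\omega}$ form an $\mathcal{N}$-indiscernible sequence, so $\{D_{2i,2i+1} : i<\omega\}$ is an $\mathcal{N}$-indiscernible family of $\mathcal{N}$-conjugates of $D$. I will verify this family is $(K+1)$-inconsistent, where $K$ is the alternation rank of $\varphi$ on indiscernible sequences — the NIP assumption on $\varphi$ is equivalent to such a uniform bound, i.e., no element $c$ can alternate the truth value of $\varphi(c; a_i)$ more than $K$ times along any indiscernible sequence $(a_i)$. Indeed, any $c \in \bigcap_{i<k} D_{2i,2i+1}$ forces $\varphi(c; a_{2i}) \not\leftrightarrow \varphi(c; a_{2i+1})$ for each $i<k$, producing at least $k$ alternations of $\varphi(c;\cdot)$ along the single indiscernible sequence $(a_n)_{n<2k}$; taking $k > K$ gives the desired inconsistency and hence dividing.

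Since $D$ is a definable subset of $R$ that divides over $\mathcal{N} \preceq \mathcal{R}$ and has $0$ as an accumulation point, Lemma~\ref{dividing_acc_point} is violated, completing the contradiction. The only potential pitfall is arranging the indiscernible sequence to preserve the "pairwise distinct $\sim$-classes" property, but because $\not\sim$ is definable this is routine compactness; the heart of the argument is simply packaging NIP's alternation bound as a $k$-inconsistency statement for the family of symmetric differences.
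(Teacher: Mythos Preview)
Your argument is correct and shares the paper's core construction---the symmetric difference $(X-a)\triangle(X-b)$ of two translates, whose having $0$ as an accumulation point is played off against Lemma~\ref{dividing_acc_point}---but you reach the contradiction by a more direct route. The paper instead argues contrapositively: it picks $a \equiv_{\mathcal{R}_0} b$ with $a \not\sim b$, observes that by Lemma~\ref{dividing_acc_point} the set $Z = (X-a)\triangle(X-b)$ does \emph{not} divide over $\mathcal{R}_0$, extends $Z$ to a non-forking complete $\varphi$-type, and then invokes Proposition~\ref{dividing_invariant} (non-forking $\varphi$-types over models are invariant when $\varphi$ is NIP) to obtain a contradiction, since any such type must decide $\varphi(x;a)$ and $\varphi(x;b)$ differently. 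Your approach bypasses Proposition~\ref{dividing_invariant} entirely: by extracting an $\mathcal{N}$-indiscernible sequence of pairwise $\not\sim$ elements and using the uniform alternation bound that characterizes NIP, you exhibit dividing for $D_{01}$ directly and contradict Lemma~\ref{dividing_acc_point} head-on. This is a genuine simplification---it is more elementary and self-contained---while the paper's route has the virtue of isolating the general principle (Proposition~\ref{dividing_invariant}) as a reusable fact. One small point you should make explicit: choose $\mathcal{N}$ so that $X$ is $\mathcal{N}$-definable, so that the formula defining $D_{ij}$ from the pair $(a_i,a_j)$ is over $\mathcal{N}$ and the dividing witness is legitimately over $\mathcal{N}$.
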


\begin{proof}
Suppose that $X$ is definable over the $\omega$-saturated model $\mathcal{R}_0$. Let $\mathcal{R}_1$ be an $|R_0|^+$-saturated elementary extension of $\mathcal{R}_0$.  If $X / \sim$ is infinite, then it has unboundedly many equivalence classes (since $\sim$ is a definable equivalence relation) and thus we can find $a, b \in X(R_1)$ such that $\tp(a/ \mathcal{R}_0) = \tp(b / \mathcal{R}_0)$ and $a$ is not in the same $\sim$-class as $b$. This means that for every $\epsilon > 0$, there is some $g \in (-\epsilon, \epsilon)$ such that $$\neg \left[ a + g \in X \leftrightarrow b + g \in X \right],$$ thus $$g \in (X - a) \Delta (X - b),$$ and so $0$ is an accumulation point of the set $Z := (X - a) \Delta (X - b)$. By Lemma~\ref{dividing_acc_point}, $Z$ does not divide over $R_0$. By $\omega$-saturation, $Z$ does not fork over $R_0$ (see, for instance, Proposition~5.14 of \cite{Guide_NIP}). Therefore $Z$ has an extension to a complete $\varphi(x;y)$-type $p(x)$ over $R_1$ which does not fork over $R_0$. But the fact that $a \equiv_{R_0} b$ implies that $Z$ is not $R_0$-invariant, and hence $p(x)$ cannot be $R_0$-invariant either, so by Proposition~\ref{dividing_invariant}, we conclude that $\varphi(x; y)$ cannot be NIP. 

\end{proof}

Putting this all together, we obtain:

\begin{thm}
\label{IP_translations}
Suppose that $\mathcal{R}$ is a divisible OAG of burden $2$ in which an infinite discrete set is definable and also there is a definable $X \subseteq R$ which is dense and codense in some nonempty interval. Then the formula $\varphi(x; y)$ expressing that $x \in X - y$ has the independence property.

\end{thm}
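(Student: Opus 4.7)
The plan is to argue by contradiction and show that the theorem is essentially an immediate combination of the two main results just established, namely Proposition~\ref{no_inf_twiddle} and Theorem~\ref{dense_finite_twiddle}.

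First I would suppose for contradiction that $\varphi(x;y) := ``x \in X - y''$ is NIP. Since $\mathcal{R}$ is a divisible OAG of burden $2$ in which an infinite discrete set is definable, the hypotheses of Proposition~\ref{no_inf_twiddle} are satisfied for this $X$ and this $\varphi$. Applying that proposition, I would conclude that $X / \sim$ must be finite.

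Next I would invoke Theorem~\ref{dense_finite_twiddle}. The hypotheses there are exactly that $\mathcal{R}$ is a divisible OAG of burden $\leq 2$ with an infinite definable discrete set, and the conclusion rules out the existence of a definable set which is simultaneously dense and codense in a nonempty interval and has $X/\sim$ finite. But our $X$ satisfies the density/codensity condition by assumption and has $X/\sim$ finite by the previous paragraph, which is the desired contradiction.

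There is no real obstacle here: the work has already been done in establishing Proposition~\ref{no_inf_twiddle} (which converts an infinite $X/\sim$ into an IP witness via an $R_0$-indiscernible sequence and the non-forking/invariance machinery of Proposition~\ref{dividing_invariant}) and Theorem~\ref{dense_finite_twiddle} (which handles the finite $X/\sim$ case via the subgroup construction of Lemma~\ref{single_class_group} and the coset argument producing a depth-$2$ inp-pattern in an arbitrarily small interval). The present theorem simply records the logical conjunction of these two results: either $X/\sim$ is infinite, in which case $\varphi$ has IP directly, or $X/\sim$ is finite, in which case the structural hypotheses are already inconsistent. Since the second alternative is ruled out, the first must hold, giving the theorem.
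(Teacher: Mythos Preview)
Your proposal is correct and is essentially the same as the paper's proof, just organized as a proof by contradiction rather than a direct argument: the paper first applies Theorem~\ref{dense_finite_twiddle} to deduce that $X/\sim$ is infinite, then invokes the contrapositive of Proposition~\ref{no_inf_twiddle} to conclude $\varphi$ has IP, whereas you assume $\varphi$ is NIP, apply Proposition~\ref{no_inf_twiddle} directly to get $X/\sim$ finite, and then derive a contradiction with Theorem~\ref{dense_finite_twiddle}. The two are logically interchangeable.
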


\begin{proof}
By Theorem~\ref{dense_finite_twiddle}, $X / \sim$ is infinite. Hence by Proposition~\ref{no_inf_twiddle}, we conclude that $\varphi(x;y)$ has the independence property.
\end{proof}

\begin{quest}
Are there divisible OAGs of burden $2$ (or dp-rank $2$) in which both a Cantor-like set $C$ and a set $X$ which is dense and codense on an infinite interval are definable?
\end{quest}

\section{A theory with burden $2$ with both an infinite definable discrete set and a dense-codense set}

In this section, we construct an example of a complete theory $T$ with the following properties:  \begin{itemize}
\item $T$ expands the theory divisible ordered Abelian groups;
\item $T$ is definably complete;
\item $T$ has burden 2; and
\item if $\mc{M} \models T$ then there are 
   both an infinite definable discrete subset of $M$ and a definable dense and codense subset of $M$. 

\end{itemize}
   
This establishes that the hypotheses of Theorem \ref{IP_translations} can in fact be satisfied. In order to accomplish this we need to develop some background facts that slightly generalize some of the work in \cite{cp}.

Recall that in \cite{cp} the authors show that given a theory $T$ in a language $\mc{L}$ which eliminates $\exists^{\infty}$ 
and a distinguished unary predicate $S$ from $\mc{L}$ if we expand $\mc{L}$ to $\mc{L}(P)$ by adding a new unary predicate $P$ then 
the $\mc{L}(P)$-theory $$T_{S}= T \cup \{\forall x \left[ P(x) \rightarrow S(x) \right]\}$$ has a model companion $T_G$ with multiple desirable properties.

Here we point out the the results from \cite[section 2]{cp} all hold if we slightly weaken the assumption that $T$ eliminates $\exists^{\infty}$ to the assumption that  $T$ eliminates $\exists^{\infty}$ relative to $S$.

\begin{definition}  For a theory $T$ and a distinguished unary predicate $S$ we say that {\em $T$ eliminates $\exists^{\infty}$ relative to $S$} if for any formula $\varphi(x, \ob{y})$ so that $T \models \forall x \left[\varphi(x, \ob{y}) \rightarrow S(x)\right]$ there is $n \in \NN$ so that if $\mc{M} \models T$ and $\ob{a} \in M^{|\ob{y}|}$ and
 $|\varphi(M, \ob{a}) |> n$ then $\varphi(M, \ob{a})$ is infinite.
 \end{definition}

 We now provide the slight modifications to the result from \cite[Section 2]{cp}  that we will need to construct and analyze our theory.  All the proofs are {\it mutatis mutandis} from those in the original paper of Chatzidakis and Pillay. We include the specific analogous result from \cite{cp} with each statement.  From now on we assume that $T$ eliminates $\exists^{\infty}$ relative to $S$.  We also assume that $T$ eliminates quantifiers.

\begin{definition}\cite[Definitions 2.1]{cp} Let $\mc{M}$ be a saturated model of $T$, $C$ a small subset of $M$, $\ob{a}=(a_1, \dots, a_n)$ a tuple of elements of $M$, and $\varphi(\ob{x})$ a formula defined with parameters in $C$.
\begin{enumerate}

\item We define the algebraic dimension of $\ob{a}$ over $C$, $\text{a-dim}(\ob{a}/C)$, to be the maximal length of a sequence $j(i)$ of positive integers $\leq n$ such that:
\[a_{j(1)}\notin \acl(C), a_{j(i)} \notin \acl(C,a_{j(1)}, \dots, a_{j(i-1)}).\]

\item We set
\[\text{a-dim}(\varphi(\ob{x}))=\sup\{\text{a-dim}(\ob{a}/C) | M \models \varphi(\ob{a})\}.\]
\end{enumerate}
\end{definition}

 \begin{fact}\cite[Lemma 2.2]{cp}  Let $\mc{M} \models T$, $\varphi(x_1, \dots, x_n, \ob{y})$ an $\mc{L}$-formula so that 
 \[T \models \forall x_1, \dots, x_n(\varphi(\ob{x}, \ob{y}) \rightarrow \bigwedge_{1 \leq i \leq n}S(x_i))\] and $d$ an integer.  Then the set $\{\ob{b}\; |\; \text{a-dim}(\varphi(\ob{x},\ob{b}))=d\}$ is definable.
 \end{fact}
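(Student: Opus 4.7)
The plan is to mimic the proof of \cite[Lemma 2.2]{cp}, verifying that every appeal to elimination of $\exists^{\infty}$ in that argument can be replaced by the weaker hypothesis that $T$ eliminates $\exists^{\infty}$ relative to $S$. This will work because all auxiliary formulas on which the finite-versus-infinite dichotomy is invoked during the induction are projections or substitution-instances of $\varphi(\ob{x}, \ob{y})$, and since $\varphi$ forces each coordinate $x_i$ into $S$, the relevant quantified variable always ranges over a subset of $S$.

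The first ingredient is the following auxiliary observation: for any $\mc{L}$-formula $\psi(x, \ob{z})$ with $T \models \psi(x, \ob{z}) \to S(x)$, there is an integer $N = N(\psi)$ such that for any $\mc{M} \models T$ and any tuple $\ob{c}$ from $M$, the set $\psi(M, \ob{c})$ is infinite if and only if it has more than $N$ elements; this is immediate from relative elimination of $\exists^{\infty}$. Working in a sufficiently saturated $\mc{M}$, the set $\acl(\ob{c})$ has cardinality at most $|\mc{L}| + |\ob{c}|$, so an infinite $\psi(M, \ob{c})$ always contains elements outside $\acl(\ob{c})$, while any element of $\psi(M, \ob{c}) \setminus \acl(\ob{c})$ witnesses that $\psi(M, \ob{c})$ is infinite.

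Next, I would prove by induction on $d$ that $\text{a-dim}(\varphi(\ob{x}, \ob{b})) \geq d$ is equivalent to an $\mc{L}$-formula $\chi_d(\ob{b})$, working in a saturated $\mc{M}$. The case $d = 0$ is just $\exists \ob{x}\, \varphi(\ob{x}, \ob{b})$. For the inductive step, $\text{a-dim}(\varphi(\ob{x}, \ob{b})) \geq d+1$ holds precisely when there exist an index $j \leq n$ and an element $c \notin \acl(\ob{b})$ in the $j$-th projection $\pi_j \varphi(x, \ob{b}) := \exists \ob{x}\,(\varphi(\ob{x}, \ob{b}) \wedge x_j = x)$ such that $\text{a-dim}(\varphi(\ob{x}, \ob{b}) \wedge x_j = c) \geq d$ over parameters $\ob{b}, c$. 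By induction the latter is a formula $\chi_d(\ob{b}, c)$, and setting $\psi_j(x, \ob{b}) := \chi_d(\ob{b}, x) \wedge \pi_j\varphi(x, \ob{b})$ produces a formula that implies $S(x)$. By the auxiliary observation, the existence of some $c \notin \acl(\ob{b})$ satisfying $\psi_j$ is equivalent to $|\psi_j(M, \ob{b})| > N_j$ for some fixed $N_j$, a first-order condition on $\ob{b}$. Taking $\chi_{d+1}(\ob{b}) := \bigvee_{j=1}^{n} \exists^{>N_j} x\, \psi_j(x, \ob{b})$ completes the induction. Definability in arbitrary models of $T$ then follows by elementary equivalence, interpreting $\text{a-dim}$ uniformly, e.g., as computed in a saturated elementary extension.

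The main obstacle I anticipate is the bookkeeping: verifying that each formula on which we invoke the finite-versus-infinite dichotomy genuinely forces its free variable into $S$, so that a single uniform constant $N_j$ suffices. Each $\psi_j$ is a conjunction in which the projection $\pi_j\varphi$ appears, and this factor alone guarantees $S(x)$; hence the relative elimination of $\exists^{\infty}$ suffices at exactly the places where the original proof in \cite{cp} invoked the absolute version.
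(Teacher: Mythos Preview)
Your proposal is correct and is precisely what the paper intends: the paper does not give an explicit proof of this Fact but simply declares that the argument from \cite[Lemma 2.2]{cp} goes through \emph{mutatis mutandis} under the weaker hypothesis of elimination of $\exists^{\infty}$ relative to $S$, and your write-up is exactly that verification. The key point you correctly isolate---that every formula on which one must decide ``finite versus infinite'' in the induction is a projection or substitution-instance of $\varphi$ and therefore implies $S(x)$ on the quantified variable---is the only thing to check, and your bookkeeping handles it.
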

 
 \begin{fact}\cite[Lemma 2.3]{cp} Let $\mc{M}$ be a saturated model of $T$ and $\varphi(x_1, \dots, x_n, \ob{y})$ an $\mc{L}$-formula so that 
  \[T \models \forall x_1, \dots, x_n(\varphi(\ob{x}, \ob{y}) \rightarrow \bigwedge_{1 \leq i \leq n}S (x_i)).\]  Then the set 
  $\Sigma(\varphi)$ of tuples $\ob{b}$ so that there is $\ob{a} \in M$ satisfying $\varphi(\ob{x}, \ob{b})$ and $\ob{a} \cap \acl(\ob{b}) =\emptyset$ is definable.
  \end{fact}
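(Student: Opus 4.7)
My plan is to mimic the proof of \cite[Lemma~2.3]{cp} \emph{mutatis mutandis}, checking at each step that wherever the original argument invokes the elimination of $\exists^{\infty}$, the relative version with respect to $S$ suffices in its place.

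The key observation I would make at the outset is that the hypothesis $T \models \forall \ob{x}\,[\varphi(\ob{x}, \ob{y}) \rightarrow \bigwedge_{i=1}^n S(x_i)]$ propagates through all auxiliary formulas produced from $\varphi$ during the induction: projections $\pi_i(x_i, \ob{y}) := \exists x_1 \ldots \widehat{x_i} \ldots x_n\, \varphi(\ob{x}, \ob{y})$, fibers of $\varphi$ obtained by substituting elements of $S$ for some of the $x_i$, and Boolean combinations thereof. Whenever the Chatzidakis--Pillay argument invokes an algebraic formula $\psi(x, \ob{y})$ to witness $a_i \in \acl(\ob{b})$ for some $a_i \in S$, one may freely replace $\psi$ by $\psi(x, \ob{y}) \wedge S(x)$; the replacement still contains $a_i$ in its finite solution set at $\ob{b}$, but now implies $S(x)$, so the relative elimination of $\exists^{\infty}$ supplies a uniform cardinality bound $N_\psi$ depending only on $\psi$.

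Next I would proceed by induction on $n = |\ob{x}|$ as in \cite{cp}. The base case $n = 0$ is vacuous and the case $n = 1$ reduces to ``$\varphi(M, \ob{b})$ is infinite,'' which by relative $\exists^{\infty}$-elimination is equivalent to the first-order condition $\exists^{> N_\varphi} x\, \varphi(x, \ob{b})$. In the inductive step one stratifies the tuples $\ob{a} \in \varphi(M, \ob{b})$ by the algebraic dimension $\text{a-dim}(\ob{a}/\ob{b})$, whose behaviour in $\ob{b}$ is controlled by Fact~3.3, and then carries out a compactness argument to encode the condition ``$\ob{a} \cap \acl(\ob{b}) = \emptyset$'' as a finite Boolean combination of first-order statements, each involving an algebraic formula from a fixed finite list together with its associated $N_\psi$-bound.

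The main technical obstacle I anticipate is in the inductive step: ensuring that the compactness reduction produces a list of witness-formulas uniform across $\ob{b}$, rather than a list depending on $\ob{b}$. This is precisely the point at which relative $\exists^{\infty}$-elimination is indispensable, since for each $\psi$ implying $S$ the bound $N_\psi$ depends only on $\psi$, so the condition ``$\psi(M, \ob{b})$ is finite'' is first-order in $\ob{b}$ alone. Combining such conditions across the finitely many formulas produced at each a-dim stratum (via Fact~3.3) yields the desired first-order description of $\Sigma(\varphi)$, and the Chatzidakis--Pillay argument then goes through unchanged.
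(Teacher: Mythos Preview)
Your proposal is correct and matches the paper's approach exactly: the paper does not spell out a proof but simply declares that the argument of \cite[Lemma~2.3]{cp} goes through \emph{mutatis mutandis} once one has relative elimination of $\exists^{\infty}$, and your outline is precisely a fleshed-out version of that observation. Your remark that the hypothesis $\varphi(\ob{x},\ob{y})\rightarrow\bigwedge_i S(x_i)$ propagates to all auxiliary formulas (so that relative elimination applies at every point where the original invokes full elimination) is the only thing that needs checking, and you have identified it correctly.
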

  
  \begin{fact}\cite[Theorem 2.4]{cp}\label{cpax}  The theory $T_S$ has a model companion $T_G$, whose axiomatization is obtained by expressing in a first order way the following properties of a model $(M,P)$:
  \begin{enumerate}
  \item $M \models T$.
  \item For every $\mc{L}$-formula $\varphi(x_1, \dots, x_n, \ob{z})$ so that 
   \[T \models \forall x_1, \dots, x_n(\varphi(\ob{x}, \ob{z}) \rightarrow \bigwedge_{1 \leq i \leq n}S(x_i)),\] 
   for every subset $I$ of $\{1, \dots, n\}$,
   \[\forall \ob{z} \left [ \exists\ob{x}\varphi(\ob{x}, \ob{z}) \wedge (\ob{x} \cap \acl_T(\ob{z})= \emptyset )\wedge \bigwedge_{1 \leq i<j \leq n}x_i \not= x_j\right ]\]
   \[\rightarrow \left [\exists\ob{x}\varphi(\ob{x},\ob{z}) \wedge \bigwedge_{i \in I}(x_i \in P) \wedge \bigwedge_{i \notin I}(x_i \notin P)\right]\]
   
   \end{enumerate}
   
   where $\acl_T$ is the algebraic closure operator for $T$.
   \end{fact}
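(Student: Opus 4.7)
The plan is to follow the Chatzidakis--Pillay proof of \cite[Theorem 2.4]{cp} and observe that every invocation of $\exists^{\infty}$-elimination there is on a formula whose existentially quantified variables are constrained to $S$; hence the relativized hypothesis suffices. I would split the argument into three standard pieces: (i) first-order expressibility of the axiom scheme; (ii) every model of $T_S$ extends to a model of the scheme; (iii) models of the scheme are existentially closed in $T_S$.

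For (i), the inner existential ``$\exists \ob{x}\,\varphi(\ob{x},\ob{z}) \wedge (\ob{x}\cap \acl_T(\ob{z})=\emptyset) \wedge \bigwedge_{i<j} x_i\neq x_j$'' is expressible because the set $\Sigma(\varphi)$ is $\mc{L}$-definable (by the preceding adaptation of \cite[Lemma 2.3]{cp}), which in turn rests on definability of the dimension-graded fibers $\{\ob{b} : \text{a-dim}(\varphi(\ob{x},\ob{b}))=d\}$ (the preceding adaptation of \cite[Lemma 2.2]{cp}). Both of these lemmas rely on $\exists^{\infty}$-elimination only for formulas whose $\ob{x}$-variables lie in $S$, so the relativized elimination is enough. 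The conclusion of the implication is literally first-order since $P$ is a basic predicate, so the scheme is a first-order axiom system.

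For (ii), I would construct an $\mc{L}(P)$-chain $(M_\alpha,P_\alpha)$ starting from any $(M,P) \models T_S$. At each step, for a failing instance $(\varphi, \ob{z}, I)$, pass to a sufficiently saturated $\mc{L}$-elementary extension $\mc{M}' \succ M_\alpha$, pick a realization $\ob{a}$ of $\varphi(\ob{x}, \ob{z}) \wedge (\ob{x}\cap \acl_T(\ob{z}) = \emptyset)$ with pairwise distinct $a_i$, and extend $P_\alpha$ to $P_{\alpha+1}$ so that $a_i \in P_{\alpha+1}$ precisely when $i \in I$; because $\varphi$ entails $S(x_i)$ for each $i$, the axiom $\forall x(P(x) \rightarrow S(x))$ is preserved. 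A union-of-chains argument, iterating through all triples $(\varphi, \ob{z}, I)$, yields a structure satisfying the full scheme.

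For (iii), suppose $(M,P)$ satisfies the scheme, $(M,P) \subseteq (N, Q) \models T_S$, and an $\mc{L}(P)$-existential formula holds in $(N,Q)$. After applying quantifier elimination for $T$, it suffices to find in $(M, P)$ a tuple $\ob{x}$ satisfying some $\mc{L}$-formula $\psi(\ob{x}, \ob{b})$ together with a prescribed $P$-pattern on its $S$-coordinates. I would separate the algebraic from the nonalgebraic parts of $\ob{x}$ over $\ob{b}$, so that the nonalgebraic piece is cut out by a formula $\varphi$ fitting the premise of the axiom scheme; the scheme applied in $(M, P)$ then produces the required witness. The principal difficulty, and the core of the \emph{mutatis mutandis} claim, is to verify that this reduction stays inside the relativized regime---that every $\exists^{\infty}$-elimination step is applied to a formula whose solutions lie in $S$. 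Once that bookkeeping is checked, the Chatzidakis--Pillay proof transfers verbatim.
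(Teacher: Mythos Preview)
Your proposal is correct and matches the paper's approach: the paper gives no proof of this Fact beyond the blanket remark that ``all the proofs are \emph{mutatis mutandis} from those in the original paper of Chatzidakis and Pillay,'' and your outline is precisely an expansion of what that \emph{mutatis mutandis} entails---namely, that in each of the three standard steps (first-order expressibility via the adapted Lemmas~2.2 and~2.3, existence of an extension, and existential closedness) every appeal to elimination of $\exists^{\infty}$ is on a formula whose free $\ob{x}$-variables are constrained to $S$, so the relativized hypothesis suffices.
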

   
 \begin{fact}\cite[Proposition 2.5]{cp}  Let $(\mc{M},P)$ and $(\mc{N},Q)$ be models of $T_G$, and let $A$ be a common subset of $M$ and $N$.  Then 
 \[(\mc{M},P)\equiv_A(\mc{N},Q) \Leftrightarrow (\acl_T(A),P \cap \acl_T(A)) \simeq_A (\acl_T(A), Q \cap \acl_T(A)).\]
 \end{fact}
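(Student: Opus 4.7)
The plan is to establish both directions of the biconditional; the real work is in $(\Leftarrow)$, and follows the template of the original Chatzidakis--Pillay argument adapted to our weaker hypothesis.

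For $(\Rightarrow)$, if $(\mc{M},P) \equiv_A (\mc{N},Q)$ then for every $\mc{L}$-formula $\theta(x,\ob{y})$ and $\ob{a} \in A$ with $\theta(x,\ob{a})$ algebraic, the counts of realizations lying in $P$ and outside $P$ are given by first-order sentences over $A$ (using elimination of $\exists^\infty$ relative to $S$ to make these counts definable in the relevant range), and hence agree between $(\mc{M},P)$ and $(\mc{N},Q)$. Combined with quantifier elimination of $T$, a routine back-and-forth inside $\acl_T(A)$ produces the required $A$-isomorphism of $(\acl_T(A), P \cap \acl_T(A))$ with $(\acl_T(A), Q \cap \acl_T(A))$.

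For $(\Leftarrow)$, fix an isomorphism $\sigma : (\acl_T(A), P \cap \acl_T(A)) \to (\acl_T(A), Q \cap \acl_T(A))$ over $A$ and pass to sufficiently saturated elementary extensions $(\mc{M}^*, P^*)$ and $(\mc{N}^*, Q^*)$. I would build a back-and-forth system $\mc{F}$ of $T$-elementary, $P^*/Q^*$-preserving partial maps $f : X \to Y$ between small $\acl_T$-closed subsets of $M^*$ and $N^*$ containing $A$, initialized at $\sigma$. The central extension step is this: given $f : X \to Y$ in $\mc{F}$ and $c \in M^*$, enumerate $(\acl_T(Xc) \setminus X) \cap S$ as $d_1, \ldots, d_n$; by $\acl_T$-closure of $X$ these all lie outside $\acl_T(X) = X$. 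Consider the partial type $\Pi(x)$ over $Y$ asserting both that $x$ realizes $f_\ast(\tp_T(c/X))$ and that for each $i$ the element $d'_i \in \acl_T(Yx)$ corresponding to $d_i$ under the canonical $T$-isomorphism $\acl_T(Xc) \to \acl_T(Yx)$ satisfies $Q^*(d'_i) \leftrightarrow P^*(d_i)$. Any realization of $\Pi$ in $N^*$ yields a suitable $c'$, so I only need consistency of each finite fragment of $\Pi$.

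Consistency of finite fragments is exactly what the genericity axiom of Fact~\ref{cpax} provides: given such a fragment, choose a formula $\varphi(y_1, \ldots, y_n, \ob{z})$ in $\tp_T((d_1, \ldots, d_n)/X)$ strong enough to imply $\bigwedge_i S(y_i)$, pairwise distinctness, and non-algebraicity over its parameters; the axiom then delivers realizations in $N^*$ with any prescribed $Q^*$-pattern, in particular the one mimicking $(P^*(d_1), \ldots, P^*(d_n))$. The main obstacle I anticipate is ensuring that the axiom is deployed on the whole tuple $(d_1, \ldots, d_n)$ \emph{simultaneously} rather than element-by-element --- an inductive approach would fold earlier $d_j$'s into the parameter set and risk violating the non-algebraicity hypothesis --- but packaging them into a single application is legitimate since all $d_i$ lie outside $\acl_T(X)$. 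With the extension step in hand, symmetrizing for the ``back'' direction and iterating yields $(\mc{M}^*, P^*) \equiv_A (\mc{N}^*, Q^*)$, whence $(\mc{M}, P) \equiv_A (\mc{N}, Q)$.
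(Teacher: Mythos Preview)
The paper does not give its own proof of this Fact: it is stated as a citation to \cite[Proposition 2.5]{cp}, with the blanket remark that ``all the proofs are \emph{mutatis mutandis} from those in the original paper of Chatzidakis and Pillay.'' Your proposal is precisely a sketch of the Chatzidakis--Pillay back-and-forth argument, adapted to the relative-$\exists^\infty$ setting, so it is aligned with what the paper intends.

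Two small points of imprecision are worth flagging, though neither is fatal. First, you enumerate $(\acl_T(Xc)\setminus X)\cap S$ as a finite list $d_1,\ldots,d_n$; in general this set is infinite (already in the specific theory $T$ of the paper, $\acl_T(Xc)$ contains infinitely many fractional parts $kc-\lfloor kc\rfloor$). Your own reduction to finite fragments of $\Pi$ handles this, so simply drop the finiteness claim and let the $d_i$ range over an arbitrary index set. Second, when you invoke the genericity axiom, the formula $\varphi(\ob{y},\ob{z})$ you feed it must retain enough information to recover a witness $c'$ for $x$: you want something of the shape $\exists x\,[\psi(x,\ob{z})\wedge \bigwedge_j \theta_{i_j}(y_j,x,\ob{z})]$, so that the $\ob{y}$-realization delivered by the axiom comes packaged with a suitable $c'$. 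Your phrase ``choose a formula $\varphi$ in $\tp_T((d_1,\ldots,d_n)/X)$'' slightly obscures this. You also need the earlier Fact (the analogue of \cite[Lemma 2.3]{cp}) to know that the hypothesis $\ob{x}\cap\acl_T(\ob{z})=\emptyset$ of the axiom is first-order and hence transfers along the $T$-elementary map $f$; this is implicit in your sketch but worth making explicit.
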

 
 \begin{fact}\cite[Corallaries 2.6]{cp}\label{completions} \begin{enumerate} \item  The completions of $T_G$ are obtained by describing $P \cap \acl_T(\emptyset)$.
 \item If $\ob{a}, \ob{b}$ are tuples from $\mc{M}\models T_G$ and $A \subseteq M$, then $\tp(\ob{a}/A)=\tp(\ob{b}/A)$ if and only if there is an $A$-isomorphism of $\mc{L}(P)$ structures from $\acl_T(A, \ob{a})$ to $\acl_T(A, \ob{b})$ which carries $\ob{a}$ to $\ob{b}$.
 
 \item Let $a \in \mc{M} \models T_G$, $A \subseteq M$.  Then $a$ is algebraic over $A$ if and only if $a \in \acl_T(A)$.  Thus algebraic closures in the sense of $T$ and $T_G$ coincide.
 
 \item Modulo $T_G$, every formula $\varphi(\ob{x})$ is equivalent to a disjunction of formulas of the form $\exists \ob{y} \psi(\ob{x}, \ob{y})$, where $\psi$ is quantifier-free, and for every $(\ob{a}, \ob{b})$ satisfying $\psi$, $\ob{b} \in \acl_T(\ob{a})$.
 
 \end{enumerate}
 \end{fact}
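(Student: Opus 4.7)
The plan is to derive each of the four parts as a consequence of the preceding Fact (\cite[Proposition 2.5]{cp}) together with the axioms of $T_G$ given in Fact~\ref{cpax}; the proofs run \emph{mutatis mutandis} as in \cite{cp}, with only the mild hypothesis of elimination of $\exists^{\infty}$ relative to $S$ (rather than absolutely) required.

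For (1), I would apply the preceding Fact with $A = \emptyset$: two models $(\mc{M},P), (\mc{N},Q)$ of $T_G$ satisfy $(\mc{M},P) \equiv (\mc{N},Q)$ if and only if $(\acl_T(\emptyset), P \cap \acl_T(\emptyset)) \simeq (\acl_T(\emptyset), Q \cap \acl_T(\emptyset))$ as $\mc{L}(P)$-structures, so the completion of $T_G$ is determined by the isomorphism type of this expansion of $\acl_T(\emptyset)$. For (2), I would name the tuples $\ob{a}, \ob{b}$ by fresh constants $\ob{c}$, pass to the expanded language, and apply the preceding Fact with parameter set $A \cup \{\ob{c}\}$; the forward direction is routine since $\tp(\ob{a}/A) = \tp(\ob{b}/A)$ forces the map $\ob{a} \mapsto \ob{b}$ to extend uniquely to an $A$-$\mc{L}(P)$-isomorphism of algebraic closures.

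For (3), the direction $a \in \acl_T(A) \Rightarrow a$ algebraic in $T_G$ is immediate because $T_G$ extends $T$. For the converse, assume $a \notin \acl_T(A)$; by $T$-quantifier elimination and elimination of $\exists^{\infty}$ relative to $S$, the $\mc{L}$-type $\tp^T(a/A)$ has infinitely many realizations. For each such realization $a'$, the axioms of Fact~\ref{cpax} applied to the tuple enumerating $\acl_T(A,a') \setminus \acl_T(A)$ (which is jointly outside $\acl_T(A)$) allow us to arrange that the $P$-configuration on $\acl_T(A,a')$ matches, under the natural $A$-$\mc{L}$-isomorphism sending $a$ to $a'$, the configuration on $\acl_T(A,a)$. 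Part (2) then gives $\tp(a'/A) = \tp(a/A)$, producing infinitely many realizations of $\tp(a/A)$ in a sufficiently saturated model.

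For (4), part (2) shows that the complete type of $\ob{a}$ is determined by the quantifier-free $\mc{L}(P)$-diagram of $\acl_T(A,\ob{a})$ enumerated by names that are $T$-algebraic over $\ob{a}$. Hence any complete type $p(\ob{x})$ is implied modulo $T_G$ by a formula of the form $\exists \ob{y}\,\psi(\ob{x},\ob{y})$ where $\psi$ is quantifier-free and every tuple $(\ob{a},\ob{b})$ satisfying $\psi$ has $\ob{b} \in \acl_T(\ob{a})$; a standard compactness argument then expresses any $\varphi(\ob{x})$ as a finite disjunction of such formulas. The main obstacle is the converse of (3): one must invoke the extension axioms of Fact~\ref{cpax} not only for the single element $a'$ but for the full tuple of algebraic witnesses in $\acl_T(A,a')\setminus\acl_T(A)$, so that the entire induced $P$-structure matches the target pattern prescribed by the isomorphism of (2).
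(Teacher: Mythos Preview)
Your proposal is correct and follows essentially the same route as the Chatzidakis--Pillay arguments the paper invokes; the paper itself gives no independent proof of this Fact, merely recording that the proofs of \cite[Corollaries~2.6]{cp} go through \emph{mutatis mutandis} under the weaker hypothesis of elimination of $\exists^{\infty}$ relative to $S$. One small phrasing issue in your sketch of (3): you write ``for each such realization $a'$, the axioms of Fact~\ref{cpax} \ldots\ allow us to arrange that the $P$-configuration on $\acl_T(A,a')$ matches,'' which reads as though one fixes $a'$ and then adjusts $P$ on its algebraic closure; the axioms instead produce a \emph{new} realization carrying the prescribed $P$-pattern, and one then runs compactness over larger and larger finite pieces of $\acl_T(A,a)$ (noting that elements outside $S$ are automatically outside $P$, so only the $S$-part needs control). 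With that correction your argument is the standard one.
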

 
 Given this general background information we now proceed to analyze the specific theory $T$ with which we plan to work.
 
Let $T$ be the theory of the structure $\langle \R; +, <, \ZZ\rangle$ formulated in the language
$\mc{L}=\{+, <, 0, 1, \floor{\;}, S, \lambda\}_{\lambda \in \Q}$ where $S$ is interpreted as the interval $(0,1)$, the $\lambda$ are simply multiplication by $\lambda \in \Q$ and 
 $\floor{x}=\max\{y \in \ZZ : y\leq x\}$.  Note we do not include a separate predicate for $\ZZ$ as the set of integers is defined by the quantifier free formula $\floor{x}=x$.  Also notice that as $T$ is linearly ordered $\acl=\dcl$ in models of $T$.  By results from the appendix of \cite{ivp} the $\mc{L}$-theory $T$ eliminates quantifiers and is universally axiomatizable.  We record some additional facts about $T$ which we will use repeatedly without further mention.
 
 \begin{fact}\label{zfact}
 \begin{enumerate}
 \item If $\mc{M} \models T$ and $f:M^n \to M$ is definable then $f$ is given piecewise by terms; more precisely, if $f$ is definable with parameters $\ob{c}$, then there are $\ob{c}$-definable sets $X_1, \dots, X_k$ partitioning $M^n$ and terms $t_1(\ob{x}, \ob{y}), \dots, t_k(\ob{x}, \ob{y})$ so that $f(\ob{x})=t_i(\ob{x}, \ob{c})$ on $X_i$.
   \item $T$ has dp-rank 2.
 \item If $\mc{M} \models T$ and $X \subseteq M$ is definable then $X$ is either discrete or has interior.
 \item If $\mc{M} \models T$ and $X \subseteq (0,1)$ is definable then $X$ is a finite union of points and intervals, so $T$ eliminates $\exists^{\infty}$ relative to $S$ and the induced structure on $(0,1)$ is o-minimal.
 \item If $\mc{M} \models T$, $X \subseteq M$ is definable and discrete and $f: M \to M$ is definable then $f[X]$ is discrete.
 \item If $\mc{M} \models T$, $U \subseteq M$ is an open interval and $f: U \to M$ is definable then there is an open subinterval $V \subseteq U$ so that $f$ is equal to a linear function of the form 
 $x \mapsto \lambda x +a$ on $V$.
 \end{enumerate}
\end{fact}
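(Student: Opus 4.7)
The proof rests on quantifier elimination for $T$ in the language $\mc{L}$, which is established in the appendix of \cite{ivp} and which the paper has already invoked. Under QE, every term of $\mc{L}$ is a $\Q$-linear expression in the variables, the constant $1$, and finitely many (possibly nested) floor subterms, and every atomic formula is either the assertion that such a term lies in $S$, equals another term, or is less than another term. All six parts of Fact~\ref{zfact} will follow from this description together with a single combinatorial device: given finitely many terms $\tau_1(x, \ob c), \ldots, \tau_k(x, \ob c)$, any bounded interval can be partitioned into finitely many subintervals on each of which every $\lfloor \tau_j \rfloor$ takes a constant integer value. This follows because each $\tau_j$ is a continuous piecewise-$\Q$-linear function of $x$ and so takes integer values only finitely often on a bounded interval, and one can iterate the argument on nested floors by induction on term complexity.

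For (1), rewrite the quantifier-free defining formula of the graph of $f$ in disjunctive normal form; on each disjunct the function equation is forced to pick out a single term. For (4), apply the constancy partition to $(0,1)$: on each subinterval, the defining formula of $X$ becomes quantifier-free in the language of divisible ordered Abelian groups (with parameters), and such a set is a finite union of points and intervals by o-minimality of divisible ordered Abelian groups; uniformity of the partition in parameters gives elimination of $\exists^\infty$ relative to $S$. For (6), combine (1) to put $f$ in single-term form on a subinterval of $U$, then shrink further using the constancy partition so that every floor subterm is constant, leaving $f$ equal to $x \mapsto \lambda x + a$. For (3), suppose $X \subseteq M$ is definable with empty interior and has an accumulation point $p$; then translating by $p - \tfrac{1}{2}$ yields a definable set of the form $(X - p + \tfrac{1}{2}) \cap (0,1)$ which is infinite, so by (4) contains a subinterval, contradicting empty interior of $X$. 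For (5), use (1) to reduce to showing that each single $\mc{L}$-term sends a discrete set to a discrete set: scalar multiplication and translation preserve discreteness, and every floor term has range inside $\Z$, which is discrete; a finite union of discrete sets is discrete. For (2), the upper bound is the explicit dp-rank computation carried out in \cite{DG}, and the lower bound is witnessed by an obvious ict-pattern of depth $2$ whose first row picks out fractional parts via $S$ and whose second row picks out individual integers via $\lfloor x \rfloor = x$.

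The main technical obstacle is verifying uniformity in parameters for the constancy partition used in (4) and (6): one must ensure the number of pieces is bounded uniformly as the parameters vary, since this uniformity is precisely what yields elimination of $\exists^\infty$ relative to $S$ and what lets (3) transfer to non-standard models by compactness. A secondary subtlety in (5) arises when a definable function is given by \emph{different} terms on different pieces of $M$: one must invoke (1) first to reduce to the single-term case before applying the structural observation about terms, so the order of the arguments matters.
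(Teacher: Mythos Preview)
Your overall strategy---deriving everything directly from quantifier elimination and an explicit ``constancy partition'' for floor subterms---is quite different from the paper's, which for (2)--(6) simply invokes results proved elsewhere (chiefly in \cite{DG}) and for (1) uses the standard universal-axiomatization-plus-model-completeness trick.  Your route is more self-contained and your arguments for (3), (4), and (6) are essentially correct (and for (3) arguably more elementary than the paper's appeal to Corollary~\ref{discrete_interior_dichotomy}, which drags in the whole dp-rank~$2$ machinery).  However, two of your six parts have genuine gaps.

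For (1), the claim that ``on each disjunct the function equation is forced to pick out a single term'' is not justified.  Because $\mc{L}$ contains the floor function, atomic formulas in the graph variable $y$ are not limited to $y = t(\ob{x})$, $y < t(\ob{x})$, $y > t(\ob{x})$: they may take the form $\lfloor \tau(\ob{x}, y) \rfloor = \lfloor \sigma(\ob{x}, y)\rfloor$ with $y$ buried inside nested floors, and a conjunction of such literals that happens to have a unique solution in $y$ does not obviously exhibit that solution as a term.  The paper sidesteps this entirely: since $T$ is universally axiomatized and model complete, the term closure $\langle \ob{a}\,\ob{c}\rangle$ is already an elementary substructure, so $f(\ob{a})$ must lie in it; then compactness gives the finite piecewise-term description.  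You should either adopt this argument or supply a genuine inductive analysis of how $y$ can be extracted from floor atoms.

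For (5), your reduction is incomplete.  After invoking (1) you still face a single term, but a general $\mc{L}$-term is a $\Q$-linear combination $\lambda_0 x + \sum_i \lambda_i \lfloor \tau_i(x)\rfloor + c$, not merely one of ``scalar multiplication, translation, or a floor term'' taken in isolation.  Knowing that each summand individually sends discrete sets to discrete sets does not imply the sum does: the image of a discrete set under $x \mapsto x - \lfloor x\rfloor$, for instance, need not obviously be discrete by your argument, and in general the sum of two functions each with discrete image can have dense image.  The paper does not attempt a bare-hands proof here either; it cites \cite[Corollary~2.17]{DG}, which is a general fact about images of discrete sets under definable functions in strong (finite-burden) expansions of ordered groups.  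If you want a self-contained proof you will need a different idea---for example, combining (3) with a fibre-counting argument---rather than the termwise decomposition.
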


\begin{proof} For (1), it is well known that this follows from the fact that $T$ is universally axiomatizable and model complete.  For completeness we outline a proof.  Fix $\mc{M}$ and $f$.  Assume that $f$ is definable with parameters $\ob{c}$.  Let $\ob{a} \in M$.  Consider the closure of $\ob{a}\ob{c}$ under all terms, $\langle \ob{a}\ob{c}\rangle$.  As $T$ is universal $\langle \ob{a}\ob{c}\rangle \models T$.  As $T$ is model complete $\langle \ob{a}\ob{c}\rangle \preceq \mc{M}$.  In particular $f(\ob{a}) \in \langle \ob{a}\ob{c}\rangle$ so that $f(\ob{a})=t(\ob{a}, \ob{c})$ for some term $t$.  The result then follows by compactness.

  See the note after the proof of Theorem 3.1 in \cite{DG} for (2).  As $T$ is clearly definably complete (3) follows from Corollary \ref{discrete_interior_dichotomy}.  (4) is  a special case of \cite[Lemma 3.3(1)]{DG}.  (5) follows from \cite[Corollary 2.17]{DG}.  (6) follows from \cite[Lemma 3.2(2)]{DG} and  (1).
\end{proof}

As $T$ eliminates $\exists^{\infty}$ relative to $S$ we can form the theory $T_G$.  For convenience we fix a completion $T^*_G$ of $T_G$ by specifying that if $\mc{M} \models T_G$ then $P(M) \cap \dcl{(\emptyset)}=\emptyset$ (see Fact \ref{completions}(1)).

\begin{lem} $T_G^*$ has quantifier elimination and is definably complete.
\end{lem}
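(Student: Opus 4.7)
The plan is to first establish quantifier elimination for $T_G^*$ and then use it, together with the axiom schema defining $T_G$, to derive definable completeness.

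For quantifier elimination, I will start from Fact~\ref{completions}(4), which states that modulo $T_G$ every $\mc{L}(P)$-formula $\varphi(\ob{x})$ is equivalent to a disjunction of formulas $\exists\,\ob{y}\,\psi(\ob{x},\ob{y})$ with $\psi$ quantifier-free and every solution satisfying $\ob{y} \subseteq \acl_T(\ob{x})$. Since $T$ is linearly ordered we have $\acl_T = \dcl_T$, so each component of $\ob{y}$ is a $T$-definable partial function of $\ob{x}$; by Fact~\ref{zfact}(1), each such function is piecewise given by $\mc{L}$-terms, and because $T$ has quantifier elimination the pieces are cut out by quantifier-free $\mc{L}$-formulas. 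Substituting the appropriate term for $\ob{y}$ on each piece eliminates the existential quantifier and yields a quantifier-free $\mc{L}(P)$-formula.

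For definable completeness, let $X \subseteq M$ be definable, nonempty, and bounded above. By QE, $X = \phi(M)$ for a quantifier-free $\mc{L}(P)$-formula $\phi$, which I may put into disjunctive normal form; it suffices to bound the supremum of a single disjunct $\phi_i(x) = \chi_i(x) \wedge \bigwedge_j P(t_{ij}(x))^{\epsilon_{ij}}$ with $\chi_i$ a conjunction of $\mc{L}$-literals. Setting $Y_i = \chi_i(M)$ and using Fact~\ref{zfact}(1), I refine $M$ into a $T$-definable family of open pieces $\{J_\ell\}$ on which each $t_{ij}$ is linear, $t_{ij}(x) = \lambda_{ij}^\ell x + c_{ij}^\ell$.

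On each $Y_i \cap J_\ell$, after handling the degenerate cases (constant forms when $\lambda_{ij}^\ell = 0$, and coincident forms), $\phi_i$ reduces to a fixed conjunction of $P$-atoms $P(\lambda_j x + c_j)^{\epsilon_j}$ with pairwise distinct, non-constant linear arguments. I then apply the axiom schema of Fact~\ref{cpax} to the $\mc{L}$-formula $\varphi(\ob{x'},\ob{z}) \equiv \bigwedge_j S(x'_j) \wedge \exists x\bigl(x \in J' \wedge \bigwedge_j x'_j = \lambda_j x + c_j\bigr)$ for an arbitrary open subinterval $J' \subseteq Y_i \cap J_\ell$, where $\ob{z}$ contains the endpoints of $J'$ and the coefficients $\lambda_j, c_j$. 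Using that for generic $x \in J'$ the $x'_j$ are pairwise distinct and avoid $\acl_T(\ob{z})$, the axiom provides a realization of the desired $P$-pattern in $J'$ unless the conjunction is trivially contradictory. Hence $\phi_i(M) \cap Y_i \cap J_\ell$ is either empty or dense in $Y_i \cap J_\ell$, so in either case its supremum lies in $M$ by the definable completeness of $T$. The union of pieces on which the conjunction is non-contradictory is itself a $T$-definable subset of $Y_i$, and by definable completeness of $T$ the sup over these pieces also lies in $M$; taking the maximum over the finitely many disjuncts then yields $\sup X \in M$.

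The main technical challenge is invoking Fact~\ref{cpax} correctly while tracking the algebraic dependencies among the linear forms $\lambda_{ij}^\ell x + c_{ij}^\ell$; the locus where such dependencies develop is always a $T$-definable discrete set and therefore contributes only negligibly to the computation of the supremum.
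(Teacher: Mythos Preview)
Your quantifier-elimination argument matches the paper's: both use Fact~\ref{completions}(4), the identity $\acl_T=\dcl_T$, and Fact~\ref{zfact}(1) to replace the existentially-bound algebraic witnesses by terms.

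For definable completeness, however, your route is entirely different from the paper's. The paper does no syntactic analysis at all: it simply constructs, by a transfinite induction of length $|\R|$, a set $G\subseteq\R\setminus\dcl(\emptyset)$ such that $\langle \R;+,<,0,1,\lfloor\cdot\rfloor,S,G,\lambda\rangle_{\lambda\in\Q}$ satisfies the axioms of $T_G^*$. Since every bounded subset of $\R$ has a supremum in $\R$, this model is definably complete; as $T_G^*$ is complete, so is every model. This avoids the formula-by-formula computation you propose.

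Your syntactic approach is plausible but has a real gap at the sentence ``the union of pieces on which the conjunction is non-contradictory is itself a $T$-definable subset of $Y_i$.'' On a piece $J_\ell$ where some $t_{ij}$ has slope $0$, the conjunct $P(t_{ij}(x))^{\epsilon_{ij}}$ collapses to $P(c_{ij}^\ell)^{\epsilon_{ij}}$ for a constant $c_{ij}^\ell\in(0,1)$, and whether this holds is \emph{not} an $\mc{L}$-condition; different pieces can yield different such constants, so there is no evident reason the good pieces should form an $\mc{L}$-definable set. The fix is to observe that each $\mc{L}$-term $t_{ij}(x,\ob{a})$ takes only finitely many constant values in $(0,1)$: the locus where $t_{ij}$ is locally constant is an $\mc{L}$-definable open set whose interval-midpoints form a discrete $\mc{L}$-definable set, and by Fact~\ref{zfact}(5) and~(4) its image under $t_{ij}$ in $(0,1)$ is finite. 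Once these finitely many constants are named as parameters, their $P$-status is fixed in the model at hand and the ``good piece'' condition becomes $\mc{L}$-definable, after which your supremum computation goes through. Without this finiteness argument the $T$-definability claim is unjustified.
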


\begin{proof}  That $T^*_G$ has quantifier elimination follows from Fact \ref{completions}~(4) together with the facts that $\acl_T=\dcl_T$, definable functions are given piecewise by terms (by Fact \ref{zfact}~(1)), and a use of compactness.

To establish that $T_G^*$ is definably complete  we work in the structure $\mc{R}=\langle \R; +, <, 0, 1, \floor{\;}, S, \lambda\rangle_{\lambda \in \Q}$.  It suffices to show that there is $G \subseteq \R \setminus \dcl(\emptyset)$ so that \[\langle \R; +, <, 0, 1, \floor{\;}, S, G, \lambda\rangle_{\lambda \in \Q}\] is a model of $T_G^*$.

We construct $G \subseteq \R$ and its complement $\ob{G} = \R \setminus G$ simultaneously by induction.  Let $\bm{c}=\left|\R\right|$. 

 By an instance of axiom (2) in Fact \ref{cpax} we mean a triple $C=(\varphi(\ob{x}, \ob{y}), \ob{c}, I)$ so that:
 \begin{enumerate}
 \item  $\varphi(x_1, \dots, x_n, \ob{y})$ is an $\mc{L}$-formula;
  \item $\ob{c} \in \R^{|\ob{y}|}$ and in some extension $\mc{M}$ of $\mc{R}$ there is $\ob{a} \in M^{|\ob{x}|}$ satisfying $\varphi(\ob{x}, \ob{c})$ with $\ob{a} \cap \acl{(\ob{c})}=\emptyset$ and all coordinates of $\ob{a}$ distinct; and
 \item $I \subseteq \{1, \dots, n\}$.
 \end{enumerate}
 We say for a set $G' \subseteq \R$  a tuple  $\ob{a} \in \R^{|\ob{x}|}$ satisfies $C$ relative to $G'$  if $\mc{R} \models \varphi(\ob{a}, \ob{c})$ and $a_i \in G'$ if and only if $i \in I$.

 List all instances of axiom (2) as $C_{\alpha}=(\varphi_{\alpha}(\ob{x},\ob{y}), \ob{c}_{\alpha}, I_{\alpha})$ for $\alpha \in \bm{c}$.
 
  We construct $G$ as $\bigcup_{\alpha \in \bm{c}}G_{\alpha}$ and $\ob{G}=\bigcup_{a \in \bm{c}}\ob{G}_\alpha$ so that:
\begin{enumerate}
\item $|G_{\alpha}|<\bm{c}$ and $|\ob{G}_{\alpha}|<\bm{c}$ for all $\alpha<\bm{c}$;
\item $G_{\alpha} \subseteq G_{\beta}$ and $\ob{G}_{\alpha}  \subseteq \ob{G}_{\beta}$ for $\alpha<\beta$;
\item $G_0=\emptyset$ and $\ob{G}_0=\dcl(\emptyset)$;
\item $G_{\alpha} \cap \ob{G}_{\alpha}=\emptyset$ for all $\alpha<\bm{c}$; and
\item there is $\ob{c}_{\beta} \in G_{\alpha} \cup \ob{G}_{\alpha}$ satisfying condition   $C_{\beta}$ relative to $G_{\alpha}$ for all $\beta < \alpha$.
\end{enumerate}

If we can construct such a $G$ then it is immediate that \[\langle \R; +, <, 0, 1, \floor{\;}, S, G, \lambda\rangle_{\lambda \in \Q} \models T_G^*.\]

Constructing $G_0$ and $\ob{G}_0$ is immediate.  Also if $\alpha$ is a limit ordinal and we have constructed $G_{\beta}$ and $\ob{G}_{\beta}$ for all $\beta<\alpha$ then we simply let $G_{\alpha}=\bigcup_{\beta<\alpha}G_{\alpha}$ and $\ob{G}_{\alpha}=\bigcup_{\beta<\alpha}\ob{G}_{\beta}$.   

Thus suppose we have a successor ordinal $\alpha+1$ and we have constructed $G_{\beta}$ and $\ob{G}_{\beta}$ for all $\beta \leq\alpha$.  We need to extend the construction to satisfy $C_{\alpha}=(\varphi_{\alpha}(\ob{x}, \ob{y}), \ob{c}_{\alpha}, I_{\alpha})$.  Let $X_{\alpha} \subseteq (0,1)^n$ be the subset of $\R^n$ defined by $\varphi_{\alpha}(\ob{x},\ob{c}_{\alpha})$.  By Fact \ref{zfact}~(2) the structure induced by $\mc{R}$ on $(0,1)$ is o-minimal.  Hence without loss of generality we may assume that $X_{\alpha}$ is an o-minimal cell.
After potentially permuting the variables of the defining formula for $X_{\alpha}$ we may without loss of generality assume that 
\[X_{\alpha}=\{(z_1, \dots, z_l, y_1, \dots, y_m) : \ob{z} \in U \text{ and } y_i=g_i(\ob{z}) \text{ for all } 1 \leq i \leq m\}\] 
where $U \subseteq \R^l$ is a definable  open o-minimal cell and the $g_i$ are definable continuous functions on $U$ which are never equal on $U$.  After refining $U$ if necessary we may also assume that if $a_1, \dots, a_{l-1} \in \pi(U)$ (where $\pi$ is projection on the first $l-1$ coordinates) then $g_i: U_{(a_1, \dots, a_{l-1})} \to \R$ is monotone  for all $1 \leq i \leq m$ (where $U_{(a_1, \dots, a_{l-1})}$ is the fiber in $U$ over $(a_1, \dots, a_{l-1})$).  Furthermore we may also assume that for any $1 \leq i \leq m$ if 
for some $(a_1, \dots, a_{l-1}) \in \pi(U)$ the function $g_i:U_{(a_1, \dots, a_{l-1})} \to \R$ is increasing then the same is true for all $(b_1, \dots b_{l-1}) \in \pi(U)$ and the same holds for ``decreasing'' or ``constant'' in place of ``increasing''.

To build $G_{\alpha+1}$ and $\ob{G}_{\alpha+1}$ it suffices to find $a_1, \dots, a_l, b_1, \dots, b_m \in X_{\alpha}$ so that 
$a_i \notin G_{\beta} \cup \ob{G}_{\beta}$ and $b_j \notin G_{\beta}\cup \ob{G}_{\beta}$ for all $1 \leq i \leq l$, $1 \leq j \leq m$, and $\beta \leq \alpha$.  We show this by induction on $l$, the dimension of $U$.  If $l=0$ then $U$ is a point, but then there can not be $\ob{a}$ in an extension of $\mc{R}$ with $\ob{a} \in X_{\alpha}$ and $\ob{a} \cap \acl(\ob{c}) =\emptyset$.  Hence there can be no such instance of axiom (2).  Thus we may assume that $l>0$ we have our result for all values less that $l$.   Suppose that for some function $g_j$ with $1 \leq j \leq m$ it is the case that $g_j: U_{(a_1, \dots, a_{l-1})} \to \R$ is constant (for simplicity suppose that this holds for the functions $g_1, \dots, g_r$ with $0 \leq r \leq m$).  Thus for $1 \leq i \leq r$ the function $g_i$ may be thought of as a function from $\pi(U)$ to $\R$.  By induction we may find $a_1, \dots a_{l-1} \in \pi(U)$ so that 
$a_i \notin G_{\beta} \cup \ob{G}_{\beta}$ for any $1 \leq i \leq l-1$ and $\beta \leq \alpha$ and also so that $g_i(a_1, \dots, a_{l-1}) \notin G_{\beta} \cup \ob{G}_{\beta}$ for all $1 \leq i \leq r$ and all $\beta \leq \alpha$.  If there are no constant functions $g_i$ then we can also easily pick $a_1, \dots, a_{l-1} \in \pi(U)$ so that no $a_i \in G_{\beta} \cup \ob{G}_{\beta}$ for any $\beta\leq\alpha$.  Each function $g_i$ with $r<i$ is monotone increasing or decreasing on $U_{(a_1, \dots, a_{l-1})}$, in particular there are less than $\bm{c}$ elements $a_l$ of $U_{a_1, \dots, a_{l-1}}$ so that $g_i(a_1, \dots, a_l) \in G_{\beta} \cup\ob{G}_{\beta}$ for any $\beta\leq\alpha$.  Also that are less that $\bm{c}$ element $a_l$ of $U_{(a_1, \dots, a_{l-1})}$ so that $a_l \in G_{\beta} \cup\ob{G}_{\beta}$ for some $\beta\leq\alpha$.  Thus simply due to cardinality considerations there must be $a_l \in U_{(a_1, \dots, a_{l-1})}$ so that $a_l \notin G_{\beta} \cup \ob{G}_{\beta}$ for any $\beta\leq \alpha$ and $g_i(a_1 \dots, a_l) \notin G_{\beta} \cup \ob{G}_{\beta}$ for all $1 \leq i \leq m$ and $\beta\leq \alpha$.  Let $\ob{c}$ be the $n$-tuple $(a_1, \dots, a_l, g_1(\ob{a}), \dots, g_m(\ob{a}))$.  Set $G_{\alpha+1}=G_{\alpha} \cup \{c_i : i \in I_{\alpha}\}$ and $\ob{G}_{\alpha+1}=\ob{G}_{\alpha} \cup \{c_i : i \in \{1, \dots, n\} \setminus I_{\alpha}\}$.

\end{proof}

Axiom scheme (2)  for $T_G$  from Fact \ref{cpax} implies that if $\mc{M} \models T_G^*$  then $P(M)$ must be dense and codense in $(0,1)$.  Thus to establish that $T_G^*$ is our desired example we must show that $T_G^*$ is of burden $2$.  Ideally we would simply like to reference \cite[Theorem 7.3]{chernikov} but as $T$ does not satisfy exchange for algebraic closure it is not clear that this result is applicable, so we provide an \emph{ad hoc} proof. We need some preliminary lemmas and observations.  The following is an immediate consquence of the axiomatization for $T_G$ given in Fact \ref{cpax}.

\begin{fact}\label{genfact} Let $\mc{M} \models T_G$ and let $U \subseteq M$ be an open interval. Furthermore suppose that $f_i: U \to (0,1)$ for $1 \leq i \leq n$ and $g_j: U \to (0,1)$  for $1 \leq j \leq m$ are definable functions which are continuous, non-constant, and monotone.  If there is $V \subseteq U$ an open interval so that $f_i(x) \not= g_j(x)$ for all $x \in V$ and all $i,j$ then there is $x \in V$ so that 
$f_i(x) \in P(M)$ for all $i$ and $g_j(x) \notin P(M)$ for all $j$.
\end{fact}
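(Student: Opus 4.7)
The plan is to reduce this to a direct application of axiom scheme~(2) from Fact~\ref{cpax}. First I would use Fact~\ref{zfact}(6) repeatedly to shrink $V$ to a subinterval on which every $f_i$ and every $g_j$ is affine, say $f_i(x) = \lambda_i x + c_i$ and $g_j(x) = \mu_j x + d_j$, with all slopes nonzero (by non-constancy). Whenever two functions in the combined list $f_1, \dots, f_n, g_1, \dots, g_m$ coincide on $V$ as affine functions, I drop the duplicate, which does not alter the conclusion to be proved; by hypothesis no such collapse identifies an $f_i$ with a $g_j$. Since two distinct affine functions agree at most at a single point (which lies in $\acl_T(\ob{c})$ for the parameter tuple $\ob{c}$ defining the coefficients), after one more shrinking of $V$ the $n+m$ values $f_1(x), \dots, f_n(x), g_1(x), \dots, g_m(x)$ are pairwise distinct for every $x \in V$.

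Let $\ob{c}$ be a finite tuple of parameters defining $V$ together with all the affine coefficients, and consider the $\mc{L}$-formula
$$\varphi(y_1, \dots, y_n, z_1, \dots, z_m; \ob{c}) \;\equiv\; \exists x\,\bigl[\, x \in V \wedge \bigwedge_{i} y_i = f_i(x) \wedge \bigwedge_{j} z_j = g_j(x)\,\bigr].$$
Every tuple satisfying $\varphi(\,\cdot\,;\ob{c})$ lies in $S(M)^{n+m} = (0,1)^{n+m}$, so axiom scheme~(2) is applicable to this formula. To verify its existential hypothesis, I would pass to an $\mc{L}$-elementary extension of $\mc{M}$ (membership in the definable set $\Sigma(\varphi)$ is absolute among models of $T$) and pick $x^* \in V$ with $x^* \notin \acl_T(\ob{c})$; since each slope is nonzero, every $f_i(x^*)$ and every $g_j(x^*)$ lies outside $\acl_T(\ob{c})$, and the $n+m$ coordinates are pairwise distinct by the previous paragraph.

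Applying axiom~(2) to $\varphi$ with $I = \{1, \dots, n\}$ inside $\mc{M}$ then produces a tuple $(y_1, \dots, y_n, z_1, \dots, z_m) \in M^{n+m}$ satisfying $\varphi(\,\cdot\,;\ob{c})$ with every $y_i \in P(M)$ and every $z_j \notin P(M)$. Unpacking $\varphi$ yields the required witness $x \in V$ with $f_i(x) \in P(M)$ for all $i$ and $g_j(x) \notin P(M)$ for all $j$. The only step requiring genuine care is the preliminary reduction to the ``generic position'' described above, so that the generic witness $x^*$ delivers a tuple with pairwise distinct, $\acl_T(\ob{c})$-avoiding coordinates; the remainder is a direct translation of the axiom, which is why the authors describe the conclusion as immediate.
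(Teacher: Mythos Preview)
Your argument is correct and is precisely the elaboration the paper has in mind when it declares the fact to be ``an immediate consequence of the axiomatization for $T_G$ given in Fact~\ref{cpax}''; the paper offers no further details. One small point worth making explicit: you implicitly treat the $f_i$ and $g_j$ as $\mc{L}$-definable (otherwise Fact~\ref{zfact}(6) would not apply and your $\varphi$ would not be an $\mc{L}$-formula as axiom~(2) requires), which is the intended reading---in every application in the paper these functions are in fact $\mc{L}$-terms.
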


\begin{lem}\label{loc_const}  Let $\mc{M} \models T_G^*$ and $f: M \to (0,1)$ be definable.  Then there are only finitely many $a \in (0,1)$ so that $f^{-1}(a)$ has nonempty interior.
\end{lem}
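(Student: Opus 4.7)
The plan is to combine quantifier elimination for $T_G^*$ with the genericity axiom scheme (Fact~\ref{cpax}(2)) to show that every $a$ in the definable set $B := \{a \in (0,1) : f^{-1}(a) \text{ has nonempty interior}\}$ arises as the value of one of finitely many $\mc{L}$-terms in the parameters $\ob{c}$ of $f$. First, by Fact~\ref{completions}(4) the graph of $f$ is a finite disjunction $\bigvee_i \exists \ob{y}_i\,\psi_i(x,z,\ob{y}_i,\ob{c})$ with each $\psi_i$ quantifier-free in $\mc{L}(P)$ and $\ob{y}_i \in \acl_T(x,z,\ob{c}) = \dcl_T(x,z,\ob{c})$. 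For each $a \in B$, fix an open $U \subseteq f^{-1}(a)$ and a disjunct $\psi_i$ witnessing the graph on an open subinterval $U_i \subseteq U$; by Fact~\ref{zfact}(1) the witness $\ob{y}_i$ may after a further refinement of $U_i$ be taken to be $t(x,z,\ob{c})$ for an $\mc{L}$-term $t$, yielding a quantifier-free $\mc{L}(P)$-formula $\tilde\psi(x,z,\ob{c})$ satisfying $\tilde\psi(x,z,\ob{c}) \Leftrightarrow z = a$ on $U_i \times (0,1)$.

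Next, I would pick an open interval $V' \ni a$ so small that on $U_i \times V'$: (a) all $\mc{L}$-terms appearing in $\tilde\psi$ are linear in $(x,z)$; (b) each $\mc{L}$-atom is constantly true or false, unless its wall is the horizontal line $z = a$; and (c) each $P$-predicate argument is always in $(0,1)$ or always outside (the latter making the predicate constantly false). The remaining $P$-predicates then take the form $P(\lambda_k x + \mu_k z + \nu_k(\ob{c}))$ and split into four classes according to whether $(\lambda_k,\mu_k)$ is $(0,0)$ (constant), $(\ne 0,0)$ ($x$-varying), $(0,\ne 0)$ ($z$-varying), or has both entries nonzero (jointly varying).

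The heart of the argument is to apply Fact~\ref{cpax}(2) to rule out every non-constant $P$-predicate. Fixing $z = a$ and taking $x$ generic in $U_i$ over $(\ob{c},a)$, the axiom scheme, applied to the $\mc{L}$-formula asserting that $\bar w$ is the tuple of $P$-arguments at $(x,a)$ for some such $x$, realizes every Boolean pattern on the $x$-varying coordinates while $\tilde\psi$ must remain true; hence the underlying Boolean combination $\beta$ is a tautology in those coordinates given the fixed values of the constants and of the $z$-varying coordinates at $z=a$. The symmetric argument at some $z = z' \in V' \setminus \{a\}$, with parameters $(\ob{c},a,z')$, shows $\beta$ is an anti-tautology in those coordinates given the $z$-varying values at $z'$. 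If no $z$-varying predicate occurs, these two conclusions are contradictory. Otherwise $\beta$ depends only on the $z$-varying pattern and is true exactly when that pattern equals its value at $a$; a final application of Fact~\ref{cpax}(2) with parameters $(\ob{c},a)$ produces a generic $z \in V'$ with $z \notin \acl_T(\ob{c},a)$ realizing that pattern, which forces $\tilde\psi(\cdot,z,\ob{c})$ true and hence $z = a$, contradicting $z \notin \acl_T(\ob{c},a)$.

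It follows that $\tilde\psi$ is a Boolean combination only of $\mc{L}$-atoms and constant-argument $P$-predicates, so on $U_i \times V'$ the line $z = a$ can only be carved out by an $\mc{L}$-atomic wall of the form $\mu z + \nu(\ob{c}) = 0$ with $\mu \ne 0$, forcing $a = -\nu(\ob{c})/\mu$. Only finitely many such values arise from the finitely many terms $\nu$ appearing in $\psi_i$, together with the finitely many linearizations of the floor function compatible with the refinement; summing over the finitely many disjuncts gives finiteness of $B$. The main obstacle I anticipate is the tautology/anti-tautology step: one needs to verify that for generic $x$ (respectively $z$) over the relevant parameters, the induced tuples of $P$-arguments are pairwise distinct and avoid $\acl_T$ of those parameters, so that Fact~\ref{cpax}(2) legitimately applies to produce arbitrary Boolean patterns on them.
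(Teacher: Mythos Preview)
Your route is entirely different from the paper's and far more laborious. The paper gives a four-line topological argument: let $X = \{b \in M : f \text{ is locally constant at } b\}$; this set is open, and by definable completeness of $T_G^*$ it decomposes into maximal open intervals on each of which $f$ is constant. The set $X_0$ of midpoints of these intervals is definable and discrete, so its image $f[X_0]$---which is exactly your set $B$---is discrete (images of discrete definable sets under definable maps stay discrete, the general fact underlying Fact~\ref{zfact}(5)), and a discrete subset of $(0,1)$ is finite. No quantifier elimination and no appeal to the genericity axioms is needed.

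Your syntactic reduction via Fact~\ref{cpax} could probably be pushed through, but the finiteness step at the end is not justified as written. You claim there are ``finitely many linearizations of the floor function compatible with the refinement,'' yet the linearization constant of an $\mc{L}$-term on $U_i \times V'$ depends on the integer values $\lfloor s(x_0,z_0,\ob{c})\rfloor$ of its floor subterms, and since $U_i \subseteq f^{-1}(a)$ may lie anywhere in $M$ as $a$ ranges over $B$, infinitely many distinct linearizations arise. The conclusion you need is still true---for each fixed $\mc{L}$-term the possible linearization constants lie in a single coset of some $\tfrac{1}{q}\Z$ (all coefficients being rational with denominator bounded by the term), so only finitely many land in $(0,1)$---but that argument is absent, and ``compatible with the refinement'' does not supply it. Combined with the obstacle you already flag for the tautology/anti-tautology step (arranging pairwise distinctness and $\acl_T$-avoidance of the $P$-arguments, which is delicate when jointly-varying and $z$-varying arguments coexist), a completed version of your plan would be an order of magnitude longer than the paper's direct proof.
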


\begin{proof} Let 
\[X=\{b \in M : f \text{ is constant in a neighborhood of } b\}.\]  Either $X$ is empty in which case we are done or it is a nonempty open set and hence a union of open intervals by definable completeness.  Also by definable completeness if $I$ is any one of these intervals $f$ is constant on $I$.  Let $X_0$ be the set of midpoints of these intervals.  $X_0$ is definable and discrete. Thus $f[X_0]$ is discrete and as $f[X_0] \subseteq (0,1)$ it must be finite.  This establishes the lemma.  

\end{proof}

\begin{lem}\label{good_form} Let $\mc{M} \models T_G^*$ and let $X \subseteq M$ be definable.  $X$ is a finite union of sets definable by formulas of the form:
\[\psi(x) \wedge \bigwedge_{i \in I}P(t_i(x)) \wedge \bigwedge_{j \in J}\neg P(s_j(x))\]
where $\psi(x)$ is an $\mc{L}$-formula possibly with parameters, $I$ and $J$ are finite sets, and the $t_i$'s and $s_j$'s are terms possibly with parameters so that:
\begin{enumerate}
\item $\psi(M)$ is either discrete or open;
\item if $\mc{M} \models \psi(a)$ then $t_{i_1}(a) \not= t_{i_2}(a)$ for $i_1 \not= i_2$, $s_{j_1}(a) \not= s_{j_2}(a)$ for $j_2 \not= j_2$ and $t_i(a) \not= s_j(a)$ for all $i,j$;
\item if $\mc{M} \models \psi(a)$ then $t_i(a) \in (0,1)$ and $s_j(a) \in (0,1)$ for all $i,j$; and
\item if $I$ is an open interval with $I \subseteq \psi(M)$ then no $t_i$ and no $s_j$ is constant on $I$.
\end{enumerate}
\end{lem}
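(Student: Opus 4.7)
The plan is to start from a quantifier-free formula for $X$ (available by the quantifier elimination of $T_G^*$ established in the previous lemma), convert it to disjunctive normal form, and successively refine each disjunct to meet conditions (1)--(4). Each DNF disjunct is a conjunction of $\mc{L}$-literals and literals $P(t(x)), \neg P(s(x))$ for $\mc{L}$-terms $t,s$; collecting the $\mc{L}$-literals into one formula $\psi(x)$ yields the initial form $\psi(x)\wedge\bigwedge_{i}P(t_i(x))\wedge\bigwedge_{j}\neg P(s_j(x))$. To secure condition (3), I refine $\psi$ by adding the $\mc{L}$-conjuncts $t_i(x)\in(0,1)$ (outside which the conjunct $P(t_i(x))$ is false since $P\subseteq S=(0,1)$, and the piece is discarded) and $s_j(x)\in(0,1)$ (outside which $\neg P(s_j(x))$ is automatic and the conjunct is dropped, eliminating $s_j$ from the list). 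Then for condition (2), I partition each piece by the finitely many possible equality/inequality patterns among $\{t_i\}\cup\{s_j\}$: where two $t_i$'s coincide I keep one representative; where two $s_j$'s coincide I keep one; and where some $t_i$ equals some $s_j$ the piece is inconsistent and discarded.

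For condition (1), I split each refined $\psi$ into its interior $\inte(\psi(M))$ and the residue $\psi(M)\setminus\inte(\psi(M))$. Both are $\mc{L}$-definable, and the residue has empty interior, so by Fact \ref{zfact}(3) it is discrete. The result is, on each piece, a $\psi$ that is either open or discrete, with conditions (2) and (3) preserved since only $\psi$ was shrunk.

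The main work lies in condition (4), which I enforce by processing one term at a time on each open piece. Fix an open piece $U=\psi(M)$ and a term $t_i$; let $A=\{x\in U:t_i \text{ is constant on some } U\text{-neighborhood of } x\}$, an open subset of $U$. Applying Lemma \ref{loc_const} to a definable extension of $t_i$ to a function $M\to(0,1)$, only finitely many values $c_1,\dots,c_r$ are attained by $t_i$ on $A$, so $A=\bigsqcup_{k=1}^r A^{(k)}$ with $A^{(k)}=A\cap t_i^{-1}(c_k)$ open. On $A^{(k)}$ the conjunct $P(t_i(x))$ is equivalent to the closed formula $P(c_k)$, so by consulting $\mc{M}$ I either drop that conjunct (when $\mc{M}\models P(c_k)$) or discard the piece. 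On $U\setminus A$, I further decompose into the open set $B=\inte(U\setminus A)$ (on which $t_i$ is non-constant on every subinterval, by the definition of $A$) and the residue $(U\setminus A)\setminus B$ (discrete by Fact \ref{zfact}(3)). An analogous step handles each $\neg P(s_j(x))$ using $\neg P(c_k)$. The principal subtlety is ensuring that iterating this procedure over the finite list of terms does not re-introduce a violation of (4) for a previously processed term, but since each refinement only shrinks the current open $\psi$, a term already non-constant on every open subinterval of $\psi$ remains so on every open subinterval of any sub-piece, so the process terminates with the desired finite union.
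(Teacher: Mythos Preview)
Your proposal is correct and follows essentially the same approach as the paper's proof, which is a terse sketch: start from quantifier elimination, reduce to DNF, and refine each disjunct by repeatedly partitioning $\psi(M)$, citing the discrete/interior dichotomy for (1), the inclusion $P\subseteq(0,1)$ for (3), and Lemma~\ref{loc_const} for (4). Your write-up simply fills in the details the paper omits; the only cosmetic difference is that the paper frames the refinement as an induction on $|I|+|J|$, whereas you describe it as processing the terms one at a time, but the underlying mechanism (dropping a $P$- or $\neg P$-conjunct on each locally constant piece via Lemma~\ref{loc_const}, and passing to the interior or discrete residue elsewhere) is the same.
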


\begin{proof} By quantifier elimination for $T_G$ we reduce to the case where $X$ is defined by a formula of the form:
\[\psi_0(x) \wedge \bigwedge_{i \in I_0}P(t_i^0(x)) \wedge \bigwedge_{j \in J_0}\neg P(s^0_j(x))\] where $\psi_0$ is an $\mc{L}$-formula, $I_0$ and $J_0$ are finite sets, and the $t^0_i$'s and $s^0_j$'s are terms.  The proof is a straightforward induction on $|I_0+J_0|$ and proceeds by repeatedly partitioning the set defined by $\psi_0(x)$ into smaller $\mc{L}$-definable subsets where (1) follows as any definable set in $\mc{M}$ either is discrete or has interior, (2) is immediate, (3) is immediate as $P(M) \subseteq (0,1)$, and (4) follows by Lemma \ref{loc_const} and the fact that $T$ eliminates $\exists^{\infty}$ relative to $(0,1)$.

\end{proof}

\begin{lem}\label{disc_def}  If $\mc{M} \models T_G^*$ and $X \subseteq M$ is definable and discrete then $X$ is definable in 
$\mc{M} \restriction \mc{L}$.
\end{lem}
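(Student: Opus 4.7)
The plan is to decompose $X$ according to Lemma~\ref{good_form}, writing $X = X_1 \cup \cdots \cup X_n$ where each piece $X_k$ is defined by a formula
\[ \psi_k(x) \wedge \bigwedge_{i \in I_k} P(t_{k,i}(x)) \wedge \bigwedge_{j \in J_k} \neg P(s_{k,j}(x)) \]
satisfying conditions (1)--(4) of that lemma. Any accumulation point of $X_k \subseteq X$ would also be an accumulation point of $X$, so each $X_k$ inherits discreteness from $X$, and it suffices to prove each such $X_k$ is $\mc{L}$-definable. Fix $k$ and drop subscripts; the argument splits according to which alternative of condition (1) is realized by $\psi(M)$.

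Suppose first that $\psi(M)$ is open. I claim that $X_k = \emptyset$ in this case. Were some $a \in X_k$, then for any open interval $I \ni a$ with $I \subseteq \psi(M)$, repeated application of Fact~\ref{zfact}(6) to $I \cap (a, \infty)$ --- once per function $t_i$ and $s_j$ --- would produce an open subinterval $V$ with $a \notin V$ on which every $t_i$ and every $s_j$ is linear. By condition (4) these are non-constant, hence strictly monotone on $V$, and by condition (2) they are pairwise distinct throughout $V \subseteq \psi(M)$. Fact~\ref{genfact} then yields some $x \in V$ with $t_i(x) \in P(M)$ for all $i$ and $s_j(x) \notin P(M)$ for all $j$; such $x$ lies in $X_k \setminus \{a\}$. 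Taking $I$ arbitrarily small around $a$ produces points of $X_k$ converging to $a$, contradicting discreteness.

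Suppose next that $\psi(M)$ is discrete. Then each image $t_i(\psi(M))$ and $s_j(\psi(M))$ is a discrete subset of $(0,1)$ by Fact~\ref{zfact}(5), hence \emph{finite} by Fact~\ref{zfact}(4). Partition $\psi(M)$ into the finitely many $\mc{L}$-definable classes $A_1, \ldots, A_r$ determined by the joint values of the $t_i$ and $s_j$; say $t_i \equiv c_{i,l}$ and $s_j \equiv d_{j,l}$ on $A_l$. Now membership of $a \in A_l$ in $X_k$ is governed entirely by whether the parameter-level conjunction $\bigwedge_i P(c_{i,l}) \wedge \bigwedge_j \neg P(d_{j,l})$ holds, independent of $a$, so $A_l \subseteq X_k$ or $A_l \cap X_k = \emptyset$. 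Thus $X_k$ is a finite union of the $\mc{L}$-definable sets $A_l$ and is itself $\mc{L}$-definable over $M$.

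The hardest step is the open case: one must arrange that the functions $t_i, s_j$ are simultaneously linear, non-constant, and pairwise distinct on an open subinterval not containing $a$, so that Fact~\ref{genfact} produces a point of $X_k$ genuinely different from $a$ inside any prescribed neighborhood of $a$. The discrete case reduces cleanly once one notices the two structural facts that $T$-definable images of discrete sets remain discrete (Fact~\ref{zfact}(5)) and that definable discrete subsets of the bounded interval $(0,1)$ are automatically finite (Fact~\ref{zfact}(4)).
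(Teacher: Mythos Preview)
Your proof is correct and follows essentially the same approach as the paper: reduce via Lemma~\ref{good_form}, in the open case use Fact~\ref{genfact} on a subinterval where the terms are linear to contradict discreteness, and in the discrete case use that the term-images in $(0,1)$ are finite. The paper's version is terser---it argues density of $\varphi(M)$ in an interval rather than accumulation at a fixed $a$, and in the discrete case simply says ``it follows that $X$ is definable in $\mc{M}\restriction\mc{L}$'' where you spell out the partition by joint term-values---but the underlying argument is the same.
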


\begin{proof} Without loss of generality we may assume that $X$ is defined by a formula $\varphi(x)$ of the form given in Lemma \ref{good_form}, say	
\[\varphi(x) = \psi(x) \wedge \bigwedge_{i \in I}P(t_i(x)) \wedge \bigwedge_{j \in J}\neg P(s_j(x)).\]

 If $\psi(M)$ has interior we can find an open interval $U \subseteq \psi(M)$ so that each $t_i$ and $s_j$ is equal to a linear term of the form $\lambda x + a$ on $U$.  By assumption none of the $s_i$ and $t_i$ are equal on $U$ and none of them are constant.  Thus by Fact \ref{genfact}  it must be the case that $\varphi(M)$ is dense in $U$ and thus this case is impossible.
 
If $\psi(M)$ is discrete the image of $\psi(M)$ under any of the $t_i$ or $s_j$ must be discrete.   As these images are subsets of $(0,1)$ they must be finite.  It follows that $X$ is definable in $\mc{M} \restriction \mc{L}$.

\end{proof}

Although we only need the following lemma in the specific case of models of $T$ we provide a statement and proof in much greater generality. 

\begin{lem}\label{disc_rank}  Suppose that $\mc{N}$ is an expansion of a densely ordered group of finite burden $n$.  If $X \subseteq N$ is definable, infinite, and discrete, then the burden of $X$ is less than or equal to $n-1$.
\end{lem}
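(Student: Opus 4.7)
The plan is to argue by contradiction: I will assume the burden of $X$ is at least $n$, fix a witnessing inp-pattern $(\varphi_i(x; \bar{a}_{i,j}))_{i<n,\,j<\omega}$ of depth $n$ consistent with $x\in X$ (so each row is $k_i$-inconsistent and each path has a realizer in $X$), and from it construct an inp-pattern of depth $n+1$ in $\mathcal{N}$, contradicting the burden-$n$ hypothesis. The extra row will come from the discreteness of $X$; the first $n$ rows will be localizations of the $\varphi_i$.

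A straightforward generalization of Lemma~\ref{rank_2_intervals} (with the same proof idea as for \cite[Corollary 2.13]{DG}) shows that in any densely ordered group of finite burden, an infinite definable discrete set can have no accumulation points. Combined with $\omega$-saturation of $\mathcal{N}$, this yields an $\epsilon > 0$ and an infinite increasing sequence $x_0 < x_1 < \cdots$ in $X$ such that the intervals $(x_k - 2\epsilon, x_k + 2\epsilon)$ are pairwise disjoint and each satisfies $(x_k - 2\epsilon, x_k + 2\epsilon) \cap X = \{x_k\}$. The new row would consist of the formulas $\theta_k(x) := x \in (x_k - \epsilon, x_k + \epsilon)$, which are pairwise disjoint and hence $2$-inconsistent.

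For the first $n$ rows I would use the localizations
\[
\varphi'_i(x;\bar{a}_{i,j}) \;:=\; \exists y\in X \bigl( |x - y| < \epsilon \wedge \varphi_i(y;\bar{a}_{i,j}) \bigr).
\]
The isolation property forces, for $x \in (x_k - \epsilon, x_k + \epsilon)$, the unique witness $y$ to be $x_k$ itself, so $\varphi'_i(x; \bar{a}_{i,j})$ reduces to $\varphi_i(x_k; \bar{a}_{i,j})$ on each such interval. From this, the $k_i$-inconsistency of the $i$-th row transfers to the localized row: any $x$ satisfying $k_i$ instances of $\varphi'_i$ would force a single $x_k \in X$ to satisfy $k_i$ instances of the original row, contradicting inconsistency.

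The delicate and main obstacle is verifying consistency of every path of the extended pattern. A path $(\eta, k)$ demands that $x_k$ realize the original path $\eta$, and no single $x_k$ can realize all paths simultaneously. The resolution I envision is to first extract a mutually indiscernible sub-array from $(\bar{a}_{i,j})$ and then use translation-invariance of $(\mathcal{N};+,<)$ to shift the parameters so that, for each $k$, some suitable translate of the original pattern has $x_k$ as a realizer of any prescribed path; mutual indiscernibility ensures that the $k_i$-inconsistency of each row is preserved under this modification. Carrying out this reindexing and translation correctly, and verifying that the resulting depth-$(n+1)$ array remains a genuine inp-pattern in $\mathcal{N}$, is the technical heart of the argument and the main source of difficulty.
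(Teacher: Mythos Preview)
Your overall strategy --- assume an inp-pattern of depth $n$ in $X$, add one more row using discreteness to get depth $n+1$ --- is exactly the paper's, and your localized rows $\varphi'_i(x;\bar a_{i,j}) := \exists y\in X\,(|x-y|<\epsilon \wedge \varphi_i(y;\bar a_{i,j}))$ are essentially the right idea. The genuine gap is in your choice of the $(n+1)$-th row and the proposed ``translation'' fix.

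By taking $\theta_k(x) := x\in (x_k-\epsilon,x_k+\epsilon)$ for \emph{specific} points $x_k\in X$, you have coupled the new row to particular elements of $X$: consistency of the path $(\eta,k)$ forces the unique $y$ in $\varphi'_i$ to be $x_k$, hence forces $x_k$ itself to realize the original path $\eta$. No single $x_k$ does this for all $\eta$, and your suggested repair --- translate the parameters $\bar a_{i,j}$ using the group structure --- does not work: translation-invariance applies only to the reduct $(N;+,<)$, not to arbitrary $\mathcal{L}$-formulas $\varphi_i$, so there is no reason a shifted array $(\bar a_{i,j}+c)$ defines the same sets or preserves $k_i$-inconsistency. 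Mutual indiscernibility does not help either, since you would need a different shift for each pair $(\eta,k)$ and the rows would no longer be indexed by a single sequence of parameters.

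The paper avoids this coupling by parameterizing the extra row by the \emph{distance to the predecessor in $X$} rather than by specific points. After passing to $X' = \{x\in X : (x-\delta,x+\delta)\cap X=\{x\}\}$ (which still has burden $n$ by compactness), one picks disjoint subintervals $(\varepsilon_j^0,\varepsilon_j^1)\subseteq (0,\delta)$ and lets $\varphi_{n+1}(x;\varepsilon_j^0,\varepsilon_j^1)$ say ``the immediate $X'$-predecessor $z$ of $x$ satisfies $\varepsilon_j^0 < x-z < \varepsilon_j^1$.'' Now any realizer $c\in X'$ of any path $\eta$ yields, for each $j$, the witness $x=c+t$ with $t\in(\varepsilon_j^0,\varepsilon_j^1)$: its predecessor is $c$, so all $\varphi'_i$ hold (with $y=c$) and $\varphi_{n+1}$ holds. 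The new row is $2$-inconsistent because the intervals $(\varepsilon_j^0,\varepsilon_j^1)$ are disjoint. The point is that the ``distance-to-predecessor'' coordinate is genuinely orthogonal to which $c\in X'$ one starts from, so no translation or reindexing trick is needed.
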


\begin{proof}  Without loss of generality we assume that $\mc{N}$ is sufficiently saturated.  Suppose the result fails.  Thus $X$ has burden $n$.    Let the formulas $\varphi_i(x, \ob{y})$ for $1 \leq i \leq n$ and parameters $\ob{a}_{i,j}$ for $1 \leq i \leq n$ and $j \in \omega$ witness that $X$ has burden $n$.  Notice that we only need countably many elements from $X$ to witness that the burden is $n$, thus by compactness we can find $\delta>0$ so that 
\[X'=\{x \in X : (x-\delta, x+\delta) \cap X =\{x\}\}\] also has burden $n$ witnessed by the same formulas and parameters.

We can find $0<\varepsilon_j^0<\varepsilon_j^1<\delta$ for $j \in \omega$ so that $\varepsilon^1_j<\varepsilon^0_{j+1}$ for all $j \in \omega$.  Let $\varphi_{n+1}(x,y_1y_2)$ be the formula
\[ \exists z \in X'(z<x \wedge \forall w(w \in X' \rightarrow w \leq z \vee w>x) \wedge y_1<x-z<y_2)\]
and let $\varphi'_i(x,\ob{y})$ for $1 \leq i \leq n$ be the formula
\[\exists z (z \in X' \wedge \forall w(w \in X' \rightarrow w \leq z \vee w>x) \wedge \varphi_i(x, \ob{y})).\]
It now follows that the formulas $\varphi'_1(x, \ob{y}) \dots \varphi'_n(x, \ob{y})$ and $\varphi_{n+1}(x,y_1y_2)$ with respective sequences of parameters $\ob{a}_{1,j}, \dots \ob{a}_{n,j}$ and $\epsilon^0_j,\epsilon^1_j$ witness that the theory of $\mc{N}$ has burden at least $n+1$, a contradiction.
\end{proof}

\begin{prop}  $T_G^*$ has burden $2$.
\end{prop}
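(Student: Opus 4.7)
The goal is to prove $\mathrm{burden}(T_G^*) \leq 2$; combined with $\mathrm{burden}(T_G^*) \geq \mathrm{burden}(T) = 2$ (from Fact \ref{zfact}(2), since $T$ is a reduct of $T_G^*$ and dp-rank equals burden for NIP theories), this will yield equality. I argue by contradiction: given a depth-$3$ inp-pattern $\{\varphi_i(x;\overline{a}_{i,j}) : i<3,\, j<\omega\}$ in $\mc{M}\models T_G^*$ with row $i$ being $k_i$-inconsistent and all paths consistent, I will construct a depth-$3$ inp-pattern in the $\mc{L}$-reduct of $\mc{M}$, contradicting Fact \ref{zfact}(2).

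First I preprocess. Lemma \ref{good_form} writes each $\varphi_i(x;\overline{a}_{i,j})$ as a finite disjunction of good-form formulas; a Ramsey/compactness argument (thin each row's columns to an infinite subset on which a single disjunct works) lets me assume that each $\varphi_i$ is itself in good form
$$\varphi_i(x,\overline{y}) = \psi_i(x,\overline{y}) \wedge \bigwedge_{\ell\in I_i} P(t_{i,\ell}(x,\overline{y})) \wedge \bigwedge_{\ell\in J_i} \neg P(s_{i,\ell}(x,\overline{y})),$$
satisfying conditions (1)--(4) of Lemma \ref{good_form}. By extracting an $\mc{L}(P)$-indiscernible sequence of parameters in each row, I may further arrange that for each $i$ the qualitative behavior of $\psi_i(M,\overline{a}_{i,j})$ is uniform in $j$: either always discrete, or always of nonempty interior.

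Next I replace each row by an $\mc{L}$-formula. For a \emph{discrete row}, Fact \ref{zfact}(5) applied to each $t_{i,\ell}$ and $s_{i,\ell}$, together with Fact \ref{zfact}(4), forces each of these terms to take only finitely many values on $\psi_i(M,\overline{a}_{i,j})$; hence each $P$-condition collapses, on this set, to a disjunction of equalities with finitely many specific elements of $P(M)$. Listing these as additional parameters yields an $\mc{L}$-formula $\varphi'_i(x,\overline{a}'_{i,j})$ defining exactly $\varphi_i(M,\overline{a}_{i,j})$; both $k_i$-inconsistency and path-consistency are preserved by set equality. For an \emph{open row}, $\varphi_i(M,\overline{a}_{i,j})$ is dense and codense in the open set $U_{i,j}:=\inte(\psi_i(M,\overline{a}_{i,j}))$, and by Fact \ref{genfact} together with the piecewise linearity of terms on opens (Fact \ref{zfact}(6)), the $k_i$-inconsistency must be accounted for by $\mc{L}$-definable data: either the $\mc{L}$-intersection $\bigcap_{j\in J}\psi_i$ fails to contain an open interval (so $\psi_i$, or a suitable refinement that incorporates the finite-valued behavior of terms on the resulting discrete intersection, serves as $\varphi'_i$), or some coincidence of the form $t_{i,\ell}(x,\overline{a}_{i,j_1}) = s_{i,\ell'}(x,\overline{a}_{i,j_2})$ must hold identically on a subinterval, which by piecewise linearity is recorded by an $\mc{L}$-equality on the parameters. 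In either sub-case I can package the coincidence data into an $\mc{L}$-formula $\varphi'_i$ whose row carries the requisite $k_i$-inconsistency while every path of the original pattern survives.

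Assembling the three reduced rows produces a depth-$3$ inp-pattern in the $\mc{L}$-reduct, contradicting Fact \ref{zfact}(2). The \textbf{main obstacle} is the open-row case: one must balance the requirement that $\varphi'_i$ be strong enough to be $k_i$-inconsistent against the need that it be weak enough for the original consistent paths to descend, and must show that the coincidence structure identified via Fact \ref{genfact} and Fact \ref{zfact}(6) packages cleanly as first-order $\mc{L}$-formulas on the parameters. This delicate bookkeeping is where the ``ad hoc'' nature of the argument is most apparent.
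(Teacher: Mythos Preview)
Your overall strategy---reduce a depth-$3$ inp-pattern in $T_G^*$ to one in the $\mc{L}$-reduct---matches the paper's, and your treatment of the discrete-row case is essentially the content of Lemma~\ref{disc_def}. But the open-row case, which you yourself flag as the ``main obstacle,'' is not actually carried out in your proposal: you assert that the $k_i$-inconsistency can be ``packaged'' into an $\mc{L}$-formula $\varphi'_i$ balancing inconsistency against path-survival, but you give no mechanism for doing so, and in general there is no reason such a $\varphi'_i$ should exist.

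The paper resolves this cleanly by \emph{eliminating} the coincidence sub-case rather than packaging it. Assume the rows are mutually indiscernible. If $\{\psi_1(x,\ob{a}_{1,l}) : l\}$ were consistent, pick an interval $V=(c,d)$ in the common intersection on which all terms are linear (Fact~\ref{zfact}(6)), and use \cite[Theorem~4.16]{Guide_NIP} to arrange that the sequence $\{\ob{a}_{1,l}\}$ is indiscernible over $cd$ in $\mc{M}\restriction\mc{L}$. By Fact~\ref{genfact}, the $k_1$-inconsistency of $\{\varphi_1(x,\ob{a}_{1,l})\}$ on $V$ forces $t_i(x,\ob{a}_{1,l_1}) = s_j(x,\ob{a}_{1,l_2})$ on $V$ for some $i,j$ and some $l_1,l_2$; by linearity this is an $\mc{L}$-statement about $(c,d,\ob{a}_{1,l_1},\ob{a}_{1,l_2})$, so indiscernibility over $cd$ transfers it to $l_1=l_2=0$, i.e.\ $t_i(x,\ob{a}_{1,0})=s_j(x,\ob{a}_{1,0})$ on $V$---contradicting good-form condition~(2). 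Hence each $\{\psi_k(x,\ob{a}_{k,l})\}$ is already inconsistent, and the $\psi_k$ themselves form the depth-$3$ $\mc{L}$-pattern. No packaging is needed; the indiscernibility-transfer is the missing idea in your outline.

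A secondary difference: in the discrete case the paper does not convert all three rows to $\mc{L}$-formulas. Instead it observes that a single discrete row defines an $\mc{L}$-set $X$ of dp-rank~$1$ (Lemma~\ref{disc_rank}), then intersects the remaining two rows with $X$---these intersections are discrete, hence $\mc{L}$-definable by Lemma~\ref{disc_def}---yielding a depth-$2$ $\mc{L}$-pattern inside a dp-rank-$1$ set. This avoids your need to handle mixed discrete/open configurations row-by-row.
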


\begin{proof} Fix $\mc{M}$ a sufficiently saturated model of $T$ and  suppose the result fails.  Let $\varphi_k(x, \ob{y})$ for $k \in \{1, 2, 3\}$ together with mutually indiscernible parameters $\ob{a}_{k,l}$ with $k \in \{1, 2, 3\}$ and $l \in \R$ be an inp-pattern with three rows.

If any of the $\varphi_k(x, \ob{a}_{k,0})$, say $k=1$, defines a discrete set $X$ then by Lemma \ref{disc_def} $X$ is definable in $\mc{M} \restriction \mc{L}$.  As $\mc{M} \restriction \mc{L}$ has dp-rank $2$,  $X$ must have dp-rank $1$ in $\mc{M} \restriction \mc{L}$ by Lemma \ref{disc_rank}.   Now consider $\varphi_2(M, \ob{a}_{2,0}) \cap X$.  This set is discrete and hence also definable in $\mc{M} \restriction \mc{L}$, say by the $\mc{L}$-formula $\theta_2(x, \ob{b}_{2,0})$.  As  $\{\ob{a}_{2,i} : i \in \R\}$ is indiscernible over $\ob{a}_{1,0}$ we find $\{\ob{b}_{2,i} : i \in \R\}$ so that $\theta_2(x, \ob{b}_{2,i})$ defines $X \cap \varphi_2(x, \ob{a}_{2,i})$ for $i \in \R$.  Similarly we find an $\mc{L}$-formula $\theta_3(x, \ob{y})$ and parameters $\{\ob{b}_{3,i} : i \in \R\}$ so that $\theta_3(x, \ob{b}_{3,i})$ defines $X \cap \varphi_3(x, \ob{a}_{3,i})$ for all $i \in \R$.  Hence the pair of formulas $\theta_2(x, \ob{y})$ and $\theta_3(x, \ob{y})$ with respective sequences of parameters $\{\ob{b}_{2,i} : i \in \R\}$ and $\{\ob{b}_{3,i} : i \in \R\}$ witnesses that $X$ has burden $2$.  But then $X$ has dp-rank $2$, a contradiction.

Consider $\varphi_1(x, \ob{y})$.  By  \cite[Lemma 7.1]{chernikov} we may assume  that this formula is of the form given by Lemma \ref{good_form}, say
\[\varphi_1(x, \ob{y}) = \psi_1(x, \ob{y}) \wedge \bigwedge_{i \in I}P(t_i(x, \ob{y})) \wedge \bigwedge_{j \in J}\neg P(s_j(x, \ob{y}))\] with $\psi_1(M, \ob{a}_{1,0})$ open.

Suppose that $\{\psi_1(x, \ob{a}_{1,l}) : l \in \R\}$ is consistent. 
 Then by compactness we may find an open interval $V=(c,d)$ so that $V$ is a subset of $\psi_1(M, \ob{a}_{1,l})$ for all $l \in \R$.  By shrinking $V$ further we may also assume  by Fact \ref{zfact}(6) that all the terms $t_i(x, \ob{a}_{1,l})$ and $s_j(x, \ob{a}_{1,l})$ are given by linear functions of the form $\lambda x +a$ on $V$.
   As $\mc{M} \restriction \mc{L}$ is of dp-rank 2, by \cite
   [Theorem 4.16]{Guide_NIP} we may find an open interval $W \subset \R$
  so that $\{\ob{a}_{1,j} : j \in W\}$ is indiscernible over $cd$ as a sequence in $\mc{M} \restriction \mc{L}$.  For notational convenience assume that $W=\R$. 
  
 Fix $\kappa \in \omega$ such that $\{\varphi_1(x, \ob{a}_{1,l}) : l \in \R\}$ is $\kappa$-inconsistent.  By the assumptions on the formula $\varphi_1(x, \ob{y})$ from Lemma \ref{good_form}  all the terms are continuous, monotone, and non-constant on $V$ and we have guaranteed that they are also all linear on $V$.  But then by Fact \ref{genfact} we can only have $\kappa$-inconsistency of $\{\varphi_1(x, \ob{a}_{1,l}) : l \in \R\}$  if for some $i \in I$, some $j \in J$, and some $l_1, l_2 \in \R$,
  we have that $t_i(x, \ob{a}_{1, l_1})=s_j(x, \ob{a}_{1, l_2})$ densely often in some subinterval of $V$.   As these functions are linear over $\Q$, it follows that  $t_i(x, \ob{a}_{1, l_1})=s_j(x, \ob{a}_{1, l_2})$ on all of $V$.  Then by indiscernibility of $\ob{a}_{1,l}$ over $cd$ (as an $\mc{M} \restriction \mc{L}$-sequence) it follows that $t_i(x, \ob{a}_{1,0})=s_j(x, \ob{a}_{1,0})$ on $V$.  But this violates the properties of $\varphi_1$ guaranteed by Lemma \ref{good_form}.
  
  Thus it must be the case that $\{\psi_1(x, \ob{a}_{1,l}) : l \in \R\}$ is inconsistent.  But the same holds for 
  $\{\psi_k(x, \ob{a}_{k,l}) : l \in \R\}$ for $k \in \{2,3\}$.  Thus $\psi_k(x, \ob{y})$ for $k \in \{1,2,3\}$ and $\ob{a}_{k,l}$ for $k \in \{1,2,3\}$ and $l \in \R$ is an inp-pattern with three rows in $\mc{M}\restriction \mc{L}$ which is impossible as $\mc{M}\restriction \mc{L}$ has dp-rank $2$.

\end{proof}

This example demonstrates that the results in Section 2 of this paper are in some sense sharp.  In particular notice that as indicated in Theorem \ref{IP_translations} the formula $\tau(x,y):=x \in P-y$ has the independence property by the axioms for $T_G$.

\bibliography{modelth}

\end{document}